\documentclass{amsart}

\usepackage{amsmath,amsthm,amssymb,amsfonts,verbatim}
\usepackage{mathtools}

\usepackage[usenames,dvipsnames]{xcolor}
\usepackage{mathrsfs}
\usepackage{upgreek}
\usepackage{bbm}
\usepackage{accents}

\usepackage{tikz-cd,extarrows}
\usepackage{rotating}
\usepackage{todonotes}

\usepackage[left=3.5cm,right=3.5cm,top=3.5cm,bottom=3.5cm]{geometry}

\usepackage{microtype}
\usepackage{verbatim}
\usepackage{comment}
\usepackage{enumitem,kantlipsum}
\usepackage{fouridx}
\usepackage{cite}
\usepackage{hyperref}
\usepackage{url} 

\numberwithin{equation}{section}

\newcommand{\E}{\mathbb{E}}
\newcommand{\A}{\mathcal{A}}

\newcommand{\rodd}{[r]_{\text{odd}}}
\newcommand{\ba}{\pmb{\alpha}}
\newcommand{\bb}{\pmb{\beta}}
\newcommand{\msympn}{P^{\Sym}_{n,p}}
\newcommand{\msympinf}{P^{\Sym}_{\infty,p}}
\newcommand{\mktilda}{\tilde{P}^{\Sym,\kappa}_{p}}
\newcommand{\mkNormal}{P^{\Sym,\kappa}_{p}}
\newcommand{\mkStilda}{\tilde{P}^{\Sym,\kappa}_{p,S}}

\newcommand{\mkAtilda}{\tilde{P}^{\Sym,\kappa}_{p,\ba}}
\newcommand{\mkANormal}{P^{\Sym,\kappa}_{p,\ba}}
\newcommand{\mkBtilda}{\tilde{P}^{\Sym,\kappa}_{p,\bb}}
\newcommand{\spmk}{P^{\Sym,(k)}_p}
\newcommand{\qpmk}{P_{p,\alpha}^{\textup{M},(k)}}
\newcommand{\mMalle}{P_{\infty;1,t}}
\newcommand{\mGarton}{P_{\infty;n,0}}
\newcommand{\mMalled}{P_{d;1,t}}
\newcommand{\mGartond}{P_{d;n,0}}

\renewcommand{\P}{\mathbb{P}}
\renewcommand{\leq}{\leqslant}
\renewcommand{\geq}{\geqslant}

\newcommand{\Z}{\mathbb{Z}}
\newcommand{\R}{\mathbb{R}}

\DeclareMathOperator{\Mat}{Mat}
\DeclareMathOperator{\Hom}{Hom}
\DeclareMathOperator{\cok}{cok}
\DeclareMathOperator{\Sym}{Sym}
\DeclareMathOperator{\Aut}{Aut}
\DeclareMathOperator{\Sur}{Sur}
\DeclareMathOperator{\im}{Im}
\DeclareMathOperator{\rank}{rank}

\newcommand{\ceil}[1]{\lceil#1\rceil}
\newcommand{\floor}[1]{\lfloor#1\rfloor}
\newcommand{\qbin}[2]{\genfrac{[}{]}{0pt}{}{#1}{#2}}
\newcommand{\abs}[1]{\vert#1\vert}
\newcommand{\la}{\lambda}
\newcommand{\qhyp}[5]{\fourIdx{}{#1}{}{#2}\phi\bigg[\genfrac{}{}{0pt}{}{#3}{#4};#5\bigg]}

\allowdisplaybreaks[3]

\newtheorem{thm}{Theorem}[section]
\newtheorem{conj}[thm]{Conjecture}
\newtheorem{prop}[thm]{Proposition}
\newtheorem{cor}[thm]{Corollary}
\newtheorem{lemma}[thm]{Lemma}

\begin{document}

\title[Sandpile Groups of Random Bipartite Graphs]{Sandpile groups of random bipartite graphs and families of distributions with the same moments}

\author{Jason Fulman}
\address{Department of Mathematics,
University of Southern California}
\email{fulman@usc.edu}

\author{Nathan Kaplan}
\address{Department of Mathematics, University of California, Irvine}
\email{nckaplan@math.uci.edu}

\author{Deepesh Singhal}
\address{Department of Mathematics, University of California, Irvine}
\email{singhald@uci.edu}

\author{S. Ole Warnaar}
\address{School of Mathematics and Physics,
The University of Queensland}
\email{o.warnaar@maths.uq.edu.au}

\begin{abstract}
Recently, there has been significant interest in applying the method of moments developed by Wood and others to study distributions of finite abelian groups that arise in number theory and combinatorics.  When the moments do not grow too fast, they determine a unique distribution.  We construct large families of distributions that have the same moments.  These families include several distributions that arise naturally in the study of sandpile groups of families of random graphs.  Wood determined the distribution of Sylow $p$-subgroups of sandpile groups of Erd\H{o}s--R\'enyi random graphs.  This was extended by M\'esz\'aros to sandpile groups of random $d$-regular graphs, who observed an interesting special case when $d$ is even and $p = 2$. We study Sylow $p$-subgroups of sandpile groups of random bipartite graphs and similarly find a special case for $p =2$.  Although this distribution differs from that of M\'esz\'aros, we show that they have the same moments and fit into our broader construction.  To compute the moments of the distributions we study, we apply combinatorial tools from the theory of Hall--Littlewood functions.

\end{abstract}

\maketitle

\tableofcontents

\section{Introduction}

A major goal of this paper is to study distributions of finite abelian $p$-groups that arise in the study of Sylow $p$-subgroups of sandpile groups of bipartite graphs.
We find an interesting special case that occurs when $p=2$ and investigate how it fits into a larger family of distributions.
We begin by recalling some background.

\subsection{The sandpile group of a graph}

For $\Gamma$ a graph with vertices labeled $1,2,\dots,n$, let $\deg(i,j)$ denote the number of edges between the vertices labeled $i$ and $j$, where we allow for $\deg(i,j)>1$.
The \emph{Laplacian} of $\Gamma$, denoted $L(\Gamma)$ or just $L$, is the $n\times n$ matrix with entries
\[
L_{ij}=\begin{cases} 
-\deg(i,j) & \text{for $i\neq j$}, \\
\deg(i)& \text{for $i=j$}.
\end{cases}
\]
The matrix $L$ defines a linear map $L \colon \Z^n \to \Z^n$.
Let $\Z_0^n$ denote the subspace of $\Z^n$ consisting of vectors whose elements sum to $0$.
Since each row and column of $L$ sums to $0$, the image of $L$ lies inside $\Z_0^n$.
The cokernel of $L$, denoted $\cok(L)$, is 
\[
\cok(L) = \Z^n/\im(L) \cong \Z \oplus \Z_0^n/\im(L).
\]
It is not difficult to show that the \emph{corank} of $L$, that is, $n-\rank(L)$, is equal to the number of connected components of $\Gamma$.  
In the case that $\Gamma$ is connected, $\Z_0^n/\im(L)=S_{\Gamma}$ is a finite abelian group called the \emph{sandpile group} of $\Gamma$.
This group, which contains key information about $\Gamma$, is the subject of much current research.
For example, Kirchhoff's matrix tree theorem shows that $\abs{S_{\Gamma}}$ is equal to the number of spanning trees of $\Gamma$.
We note that for a connected graph $\Gamma$, $S_{\Gamma}$ can also be defined as the cokernel of the \emph{reduced Laplacian} of $\Gamma$, which is the matrix obtained from $L(\Gamma)$ by deleting its $i$\textsuperscript{th} row and column, for any $i$.

For a fixed constant $0<u<1$, let $G(n,u)$ denote an Erd\H{o}s--R\'enyi random graph on $n$ vertices such that each of the $\binom{n}{2}$ potential edges is included independently with probability $u$.
For fixed $u$, the limit as $n\to\infty$ of the probability that $G(n,u)$ is connected is $1$.
If $S_{G(n,u)}$ denotes the sandpile group of an Erd\H{o}s--R\'enyi random graph $G(n,u)$, this implies that the limit as $n\to\infty$ of the probability that $S_{G(n,u)}$ is a finite abelian group is $1$.
If $G$ is a finite abelian group, then we write $G_p$ for the Sylow $p$-subgroup of $G$.

\subsection{Distributions of partitions and finite abelian $p$-groups}

A partition $\la=(\la_1,\la_2,\dots)$ of $n$ is a weakly decreasing sequence of nonnegative integers such that $\abs{\la}:=\la_1+\la_2+\cdots=n$. 
The positive $\la_i$ are called the parts of $\la$ and the number of parts, referred to as the length of the partition, is denoted by $l(\la)$.
We adopt the standard conventions of writing the unique partition of $0$ as $0$ and the `rectangular' partition consisting of $r$ parts of size $k$ as $(k^r)$.
Given a partition $\la$, we denote by $\la'=(\la'_1,\la'_2,\dots)$ its conjugate, so that $\la'_i$ is the number of parts of $\la$ of size at least $i$.
In particular $\la_1' = l(\la)$. 
Let $m_i(\la):=\la'_i-\la'_{i+1}$ be the multiplicity of parts of size $i$, i.e., the number of parts of $\la$ of size exactly $i$.
If all parts of $\la$ are distinct, so that $m_i(\la)\leq 1$ for all $i\geq 1$, then we say that $\la$ is a strict partition.
For partitions $\la,\mu$ such that $\la_i\geq\mu_i$ for all $i\geq 1$, we write $\mu\subseteq\la$ and say that $\mu$ is contained in $\la$.
For $\mu$ contained in $\la$ we by abuse of notation write
\[
n(\la/\mu):=\sum_{i\geq 1}\binom{\la'_i-\mu'_i}{2},
\]
where $n(\la/0)$ is more simply written as $n(\la)$.
Here we note that $n(\la)$ may also be written as $n(\la)=\sum_{i\geq 1} (i-1)\la_i$, in accordance with its usual definition, see \cite[page~3]{macdonald1995symmetric}.

We define the $q$-shifted factorials
\[
(x;q)_{\infty}:=\prod_{i=1}^{\infty}(1-xq^{i-1})
\]
and, for $n$ an arbitrary integer,
\[
(x;q)_n:=\frac{(x;q)_{\infty}}{(xq^n;q)_{\infty}}.
\]
Thus, $(x;q)_0=1$, 
\[
(x;q)_n=\prod_{i=1}^n(1-xq^{i-1})
\]
for $n$ a positive integer, and $1/(q;q)_n=0$ for $n$ a negative integer.
For arbitrary integers $m,n$ we define the $q$-binomial coefficient
\begin{equation*}
\qbin{m}{n}_q 
:=\begin{cases}\displaystyle \frac{(q;q)_m}{(q;q)_n (q;q)_{m-n}}
& \text{if $0\leq n\le m$}, \\[8pt]
0 & \text{otherwise}. \end{cases}
\end{equation*}

Let $p$ be a prime and $\la$ a partition of length $l(\la)=r$.
Then a finite abelian $p$-group of \emph{type} $\la$ and \emph{rank} $r$ is a group of the form
\[
G_{\la}:=\bigoplus_{i=1}^r \Z/p^{\la_i}\Z.
\]
The \emph{exterior square} of the abelian group $G$ is the group
\[
\wedge^2 G := (G\otimes G)/ \langle g\otimes g:\; g\in G \rangle.
\]
We recall from \cite[Section 2.4]{wood2017distribution} that
\[
\wedge^2 G_{\la} \cong \bigoplus_{i\geq 1} \big(\Z/p^{\la_i} \Z\big)^{i-1}.
\]
In particular, $\abs{\wedge^2 G_{\la}} = p^{n(\la)}$.

In order to simplify notation, for the remainder of the paper, if $p>1$ is a real number then $q:=p^{-1}\in(0,1)$.

For $R$ a ring, let $\Mat_n(R)$ denote the set of $n\times n$ matrices with entries in $R$ and
$\Sym_n(R)\subseteq\Mat_n(R)$ the set of symmetric such matrices. 
For $p$ a prime, let $\Z_p$ be the ring of $p$-adic integers and $A\in\Sym_n(\Z_p)$ a random matrix chosen with respect to additive Haar measure on $\Sym_n(\Z_p)$.
It follows from \cite[Theorem 2]{clancy2015cohen} that
\[
\P_{A\in\Sym_n(\Z_p)}\big(\cok(A)\cong G_{\la}\big)=\msympn(\la),
\]
where
\[
\msympn(\la):=\frac{q^{n(\la)+\abs{\la}}(q;q)_n (q;q^2)_{\ceil{(n-l(\la))/2}}}{(q;q)_{n-l(\la)} \prod_{i\geq 1} (q^2;q^2)_{\floor{m_i(\la)/2}}}.
\]
Note that $\msympn(\la)=0$ if $l(\la)>n$.
Letting $n$ tend to infinity gives a distribution on partitions where a partition $\la$ is chosen with probability
\begin{equation}\label{symforminf}
\msympinf(\la):=\frac{q^{n(\la)+\abs{\la}}(q;q^2)_{\infty}}{\prod_{i\geq 1}(q^2;q^2)_{\floor{m_i(\la)/2}}}.
\end{equation}
Taking limits, it also follows from \cite[Theorem 2]{clancy2015cohen} that
\[
\lim_{n\to\infty}\P_{A\in \Sym_n(\Z_p)}\big(\cok(A)\cong G_{\la}\big)=\msympinf(\la).
\]
Although the distribution $\msympinf$ was originally defined for prime $p$, it can be defined for any real $p>1$ by \eqref{symforminf}, see~\cite[Section 4]{fulman2019random}.

\subsection{Sylow $p$-subgroups of sandpile groups of random graphs}

Wood proved a strong universality result for cokernels of random symmetric $p$-adic matrices \cite[Theorem 1.3]{wood2017distribution}.
Applying this result shows that even though the reduced Laplacian of an Erd\H{o}s--R\'enyi random graph does not give a Haar random element of $\Sym_n(\Z_p)$, as $n\to\infty$, the distribution of Sylow $p$-subgroups of the sandpile group of an Erd\H{o}s--R\'enyi random graph with $n$ vertices converges to $\msympinf$.

\begin{thm}[{\!\!\cite[Theorem 1.1]{wood2017distribution}}]\label{thm:wood_p-part}
Let $p$ be a prime and $\la$ a partition.
Then for an Erd\H{o}s--R\'enyi random graph $G(n,u)$,
\[
\lim_{n\to\infty}\P\big((S_{G(n,u)})_p\cong G_{\la}\big)=\msympinf(\la).
\]
\end{thm}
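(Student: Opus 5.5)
The plan is the moment method in Wood's formulation. For a finite abelian $p$-group $G$, the $G$-moment of a random finite abelian group $X$ is $\E\,\#\Sur(X,G)$. I would prove two statements. First, for the sandpile group,
\[
\lim_{n\to\infty}\E\,\#\Sur\big(S_{G(n,u)},G\big)=\abs{\wedge^2 G}.
\]
Second, the moments of $\msympinf$ are exactly $\abs{\wedge^2 G}$, and these moments are small enough that they determine a unique distribution on finite abelian $p$-groups. Since $\Sur(X,G)=\Sur(X_p,G)$ for a $p$-group $G$, the first statement is really a statement about $(S_{G(n,u)})_p$. Wood's uniqueness and robustness theorem for moments $O(\abs{\wedge^2 G})$ then gives convergence of $(S_{G(n,u)})_p$ in distribution to $\msympinf$, which is the claim.

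For the moment computation I would use the reduced Laplacian $\Delta$, a symmetric $(n-1)\times(n-1)$ integer matrix, so that $S_{G(n,u)}=\cok(\Delta)$. A surjection $\cok(\Delta)\to G$ is the same as a surjection $F\colon \Z^{n-1}\to G$ with $F\Delta=0$. By linearity,
\[
\E\,\#\Sur\big(\cok(\Delta),G\big)=\sum_{F}\P(F\Delta=0),
\]
where the sum runs over surjections $F$.

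It is cleaner to work with the full Laplacian $L$ on $\Z^n$ and with $F\colon\Z^n\to G$ vanishing on the all-ones vector. The condition $FL=0$ says that for each vertex $j$,
\[
\sum_{i\neq j}\deg(i,j)\,\big(F(e_i)-F(e_j)\big)=0 .
\]
This is a linear condition in the independent edge variables $\deg(i,j)\in\{0,1\}$. I would then split the $F$ into codes and non-codes, following Wood. For a \emph{code} $F$, one that is far from degenerate in the sense that no small set of coordinates can be deleted while keeping $F$ surjective, a Fourier/Halász-type estimate shows that the $\binom{n}{2}$ independent edge variables equidistribute the relevant pairing. This gives
\[
\P(FL=0)\approx \abs{G}^{-n}\,\abs{\wedge^2 G}\cdot\abs{G},
\]
where the $\wedge^2 G$ factor comes from symmetry: the image of a symmetric matrix under $F\otimes F$ is constrained to the symmetric part. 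The $\abs{G}$ correction accounts for the all-ones kernel direction. Summing over the roughly $\abs{G}^{n-1}$ surjections then yields $\abs{\wedge^2 G}$. For non-codes, I would stratify by the depth of the subgroup they effectively map into, and bound both their number and their probabilities so that their total contribution is $o(1)$.

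The hardest part is this non-code and Fourier analysis: getting uniform error bounds for $\P(FL=0)$ while the entries are only $0/1$ Bernoulli($u$) variables and the diagonal entries are determined by the off-diagonal ones. I would handle the diagonal by pairing the Laplacian row condition with the edge variables, since each edge $\{i,j\}$ contributes $\deg(i,j)\,(F(e_i)-F(e_j))$ to the characters. The Fourier transform then becomes a product over edges of $\big|1-u+u\,\zeta\big|$-type factors. These are bounded away from $1$ unless the character $\zeta$ is trivial on most differences $F(e_i)-F(e_j)$, and that situation is exactly what the code condition rules out.

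Finally, I would verify the moments of $\msympinf$, either via the Cohen--Lenstra-type computation from \cite{clancy2015cohen}, or by noting that Haar symmetric matrices satisfy the same moment identity exactly in the limit. Uniqueness then closes the argument.
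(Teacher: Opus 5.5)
The paper does not prove this statement. It quotes it from \cite{wood2017distribution} as background, and your outline is Wood's own strategy: show $\E\,\#\Sur(S_{G(n,u)},G)\to\abs{\wedge^2 G}$ by a Fourier expansion over the edge variables, split into codes and non-codes, then apply her robustness theorem (the limiting version of Theorem~\ref{Thm: Wood universality}) together with $M_{p,\mu}(\msympinf)=p^{n(\mu)}$ (Theorem~\ref{Thm: moment P sym inf}, which the paper also re-derives).

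However, the step where you trade the reduced Laplacian for the full one is wrong as stated. Surjections $F\colon\Z^n\to G$ with $F(\mathbf{1})=0$ and $FL=0$ are exactly $\Sur(X_n,G)$ for $X_n=\Z^n/(\im L+\Z\mathbf{1})$. The coordinate-sum map gives an exact sequence $0\to S_{G(n,u)}\to X_n\to\Z/n\Z\to 0$. For a $p$-group $G$, $\#\Sur(X_n,G)$ agrees with $\#\Sur(S,G)$ when $p\nmid n$, but not in general.

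Concretely, take $p\mid n$ and $G=\Z/p\Z$. The $p-1$ constant maps $F(e_i)=c\neq 0$ satisfy $F(\mathbf{1})=nc=0$, and $FL=0$ holds deterministically because the columns of $L$ sum to $0$. These maps are codes in the intended sense (deleting any proper set of coordinates leaves them surjective). Yet $\P(FL=0)=1$, which contradicts your estimate $\P(FL=0)\approx\abs{G}^{1-n}\abs{\wedge^2 G}$. So along $n\equiv 0\pmod p$ you would be computing moments of $X_n$, not of $S$.

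There are two fixes:
\begin{enumerate}
\item Normalize by $F(e_n)=0$ instead. This is exactly equivalent to your (correct) reduced-Laplacian formula, because the rows of $L$ sum to zero.
\item Do as Wood does: sum over all $F$ with $F|_{\Z_0^n}$ surjective and divide by $\abs{G}$. This uses that $FL$ is unchanged when a constant map is added to $F$, so codes should be defined modulo constants.
\end{enumerate}

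There are also two smaller problems in the Fourier sketch.

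\emph{The code definition is inverted.} A code of distance $w$ is an $F$ that stays surjective after deleting \emph{any} set of fewer than $w$ coordinates.

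\emph{The role of the code condition is misdescribed.} Write a character as $C=(C_1,\dots,C_n)$ with $C_i\in\Hom(G,\mathbb{Q}/\Z)$. The edge $\{i,j\}$ then contributes the factor $1-u+u\exp\big(2\pi\sqrt{-1}\,(C_i-C_j)(F(e_i)-F(e_j))\big)$. The code condition does not rule out characters for which all these factors equal $1$. Every $C$ of the form $C_i=\phi(F(e_i))+c$, with $\phi\colon G\to\Hom(G,\mathbb{Q}/\Z)$ alternating ($\phi(x)(x)=0$) and $c$ constant, gives expectation exactly $1$. There are essentially $\abs{G}\,\abs{\wedge^2 G}$ of them, and they \emph{are} the main term. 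What the code condition buys is that every other character contributes negligibly.

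That estimate, together with the depth-stratified bound showing that non-codes contribute $o(1)$, is the real content of Wood's proof. Your proposal only asserts both.
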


Nguyen and Wood proved stronger results about sandpile groups of random graphs \cite{NguyenWood}.
For example, they determined the probability that $S_{G(n,u)}$ is cyclic.
The sandpile group of a connected graph comes with a canonical duality pairing.
In \cite[Theorem 1.1]{Hodges} Hodges proved an analogue of Theorem~\ref{thm:wood_p-part} that determines the probability that a finite abelian $p$-group arises as $(S_{G(n,u)})_p$ together with a particular choice of pairing.
In this paper, we do not pursue questions related to pairings on sandpile groups, nor do we consider the joint distribution of $(S_{G(n,u)})_p$ at multiple primes.

Now that the distribution as $n\to\infty$ of Sylow $p$-subgroups of Erd\H{o}s--R\'enyi random graphs is understood, it is natural to ask about the distribution of Sylow $p$-subgroups of sandpile groups for other classes of random graphs.
M\'esz\'aros determined the distribution of Sylow $p$-subgroups of sandpile groups of random $d$-regular graphs on $n$ vertices, as $n\to\infty$ \cite{meszaros2020distribution}.
Suppose $d\geq 3$ and $n$ is even.
The random $d$-regular graph $H_n$ is obtained by taking the union of $d$ independent uniform random perfect matchings on the set of $n$ vertices.

\begin{thm}[{\!\!\cite[Theorem 1.2]{meszaros2020distribution}}]\label{mes_thm}
Let $p$ be a prime and $\la$ a partition.
\begin{enumerate}
\item If $d$ is odd, or $d$ is even and $p$ is odd, then
\[
\lim_{n\to\infty} \P\big((S_{H_n})_p \cong G_{\la}\big) 
=\msympinf(\la).
\]
\item If $d$ is even, then $(S_{H_n})_2$ always has odd rank.
Moreover, in this case
\[
\lim_{n\to\infty} \P\big((S_{H_n})_2 \cong G_{\la}\big) 
= \begin{cases}
 2^{l(\la)} P_{\infty,2}^{\Sym}(\la) & \text{if $l(\la)$  is odd},\\
 0 & \text{otherwise}.
 \end{cases}
\]
\end{enumerate}
\end{thm}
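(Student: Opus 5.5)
This theorem is M\'esz\'aros's, and here we only sketch how one would prove it; the paper itself quotes it. The approach is Wood's method of moments. First one computes, for every finite abelian $p$-group $G$, the limiting surjection moments
\[
\lim_{n\to\infty}\E\big(\#\Sur(S_{H_n},G)\big).
\]
One then shows that these moments agree with the moments of the claimed limit distribution. Finally, one uses that moments of size $O(\abs{\wedge^2 G})$ determine a unique distribution, so the limit is the claimed one. For $\msympinf$ the moments are $\abs{\wedge^2 G}$, by Wood's computation for symmetric matrices. In case (2) the moments will instead be $\abs{\wedge^2 G}$ multiplied by a factor depending on $G/2G$.

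\textbf{Step 1 (setup).} A surjection $S_{H_n}\to G$ corresponds to a vector $x\in G^n$ that generates $G$, has coordinates summing to $0$ (up to the usual translation), and satisfies $Lx=0$. Summing over $x$, we get
\[
\E\#\Sur = \sum_x \P(Lx=0).
\]
Since $H_n$ is the union of $d$ independent perfect matchings, we have $L=dI-\sum_{j=1}^d M_j$, where each $M_j$ is the permutation matrix of a matching. The condition $Lx=0$ then becomes $\sum_j M_jx=dx$.

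\textbf{Step 2 (local counting).} One estimates $\P(\sum_j M_jx=dx)$ by grouping the vectors $x$ according to their type, i.e.\ how many coordinates take each value $g\in G$. For typical $x$, whose values are equidistributed, the event splits into constraints at each vertex, and a local central limit theorem for the random pairings gives a contribution of order $\abs{G}^{-n}$ times a correction. The correction reflects the lattice of values that $\sum_j M_jx - dx$ can actually take. Atypical $x$ are handled by large-deviation bounds and contribute $o(1)$.

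\textbf{Step 3 (the parity obstruction, the main obstacle).} This is the step where care is needed. A perfect matching pairs vertices, so in $G/2G$ we have $\sum_i (M_jx)_i=\sum_i x_i$ with an extra symmetric structure: the sum $\sum_i x_i\cdot(M_jx)_i$ over pairs counts each edge twice. When $d$ is even and $p=2$, this forces a quadratic-form-type constraint on the pairing induced on $S_{H_n}\otimes\Z/2$. Concretely, the canonical symmetric pairing on the $2$-part becomes alternating on the $2$-torsion modulo an extra vector. This constraint enlarges the set of compatible $x$ by a factor equal to $\abs{G[2]}$, up to the orbit correction. The expected result is that the moments are $\abs{\wedge^2G}\cdot\abs{G/2G}$, possibly with a correction for the rank parity. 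Identifying this factor precisely, and showing it arises for every $G$, is the hard part.

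\textbf{Step 4 (matching and parity of rank).} One then computes the $G$-moments of the measure $2^{l(\la)}P^{\Sym}_{\infty,2}(\la)\mathbf 1_{l(\la)\text{ odd}}$ using Hall--Littlewood or Cohen--Lenstra type sums, and checks that they match the moments from Step 3. Two inputs remain:
\begin{itemize}
\item Odd rank: the Laplacian mod $2$ is symmetric with zero diagonal, since $d$ is even, hence alternating. The reduced Laplacian of size $n-1$ (odd) therefore has even rank, so its corank is odd.
\item Uniqueness: the moments are $O(\abs{\wedge^2G}\,\abs{G})$, which still satisfies Wood's uniqueness criterion.
\end{itemize}
Together these give convergence to the stated distribution.
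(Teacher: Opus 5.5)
The paper does not prove this theorem. It quotes it from M\'esz\'aros, and says only two things about the proof: the limiting moments are $2^{n(\mu)+l(\mu)}=\abs{\wedge^2G_\mu}\,\abs{G_\mu/2G_\mu}$ (M\'esz\'aros's Lemmas~8.11--8.12), and for exactly this reason Wood's uniqueness theorem cannot be applied.

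Two parts of your sketch are sound. Your odd-rank argument is correct: mod $2$ the Laplacian is alternating when $d$ is even, and the reduced Laplacian has odd size $n-1$. The factor $\abs{G/2G}$ you guess in Step~3 is also the right one. A precise form of that heuristic is this: when every $L_{ii}=d$ is even, $\sum_{i,j}L_{ij}\,x_i\otimes x_j$ lies in the subgroup of $G\otimes G$ generated by the elements $g\otimes h+h\otimes g$. For a $2$-group this subgroup has index $\abs{\wedge^2G}\,\abs{G/2G}$, rather than the index $\abs{\wedge^2G}$ of the subgroup generated by the $g\otimes g$.

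\textbf{The gap is the uniqueness bullet in Step~4, which is false.} Wood's criterion (Theorem~\ref{Thm: Wood universality}) requires moments at most $\abs{\wedge^2G}=p^{n(\mu)}$. At growth $\abs{\wedge^2G}\,\abs{G/2G}$, uniqueness actually fails.
\begin{itemize}
\item By Theorem~\ref{Thm_r=1_one-par-family} with $p=2$, $k=1$, every $P^{\textup{M},(1)}_{2,\alpha}$ with $\alpha\in[-1,1]$ has $\mu$-moment $2^{n(\mu)+l(\mu)}$.
\item M\'esz\'aros's limit is $\alpha=-1$, while $\alpha=0$ gives $2^{l(\la)-1}P^{\Sym}_{\infty,2}(\la)$, the conjectured bipartite limit.
\item Equivalently, the signed measure $(-1)^{l(\la)}2^{l(\la)}P^{\Sym}_{\infty,2}(\la)$ has all moments zero (Theorem~\ref{Thm: Same moments S}). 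So the moments cannot detect the parity of the rank.
\end{itemize}
As written, your argument would equally ``prove'' that the limit is the $\alpha=0$ distribution.

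The odd-rank fact therefore cannot be a side remark; it must be the extra input that identifies the limit. You need two things.
\begin{itemize}
\item[(i)] Tightness and uniform integrability, so that subsequential limits exist, are probability measures, inherit the moments $2^{n(\mu)+l(\mu)}$, and are supported on odd rank. Wood's sequential theorem packages all of this, but only under the $\abs{\wedge^2G}$ bound, which is unavailable here.
\item[(ii)] A uniqueness statement for measures supported on odd $2$-rank with these moments.
\end{itemize}
For the rank marginal alone, (ii) follows from Wood's result quoted in the introduction. Any measure with $M_{2,(1^n)}=2^{\binom{n+1}{2}}$ for all $n$ has the rank distribution of some $P^{\textup{M},(1)}_{2,\alpha}$, and zero mass on even ranks forces $\alpha=-1$. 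Lifting this from the rank to the full distribution of $\la$ needs a further argument, and your sketch does not supply it. That further argument is precisely the extra work the even-$d$, $p=2$ case requires.
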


\subsection{Sylow $p$-subgroups of sandpile groups of random bipartite graphs}

We focus on Sylow $p$-subgroups of sandpile groups of random bipartite graphs.
Let $n_1\geq n_2$ be positive integers and $u$ a fixed real number $0<u<1$.
The Erd\H{o}s--R\'enyi random bipartite graph $G(n_1,n_2,u)$ is a bipartite graph with vertex sets $V_1$ of size $n_1$ and $V_2$ of size $n_2$, where each of the $n_1 n_2$ potential edges between a vertex in $V_1$ and a vertex in $V_2$ is included independently with probability $u$.
Fixing a constant $0<\alpha\leq1$, we are interested in the large $n$ limit of
\[
G_{\alpha}(n,u):=G(n,\ceil{\alpha n},u).
\] 
Let $S_{G_{\alpha}(n,u)}$ denote the sandpile group of the graph $G_{\alpha}(n,u)$. 
We pose the following conjecture.

\begin{conj}\label{p-Syl_conj}
Let $p$ be prime, $\alpha,u$ real numbers such that $\frac{1}{p}<\alpha\leq 1$ and $0<u<1$, and $\la$ a partition.
Then,
\[
\lim_{n\to\infty} \P\big((S_{G_{\alpha}(n,u)})_p \cong G_{\la}\big) =
\begin{cases} \msympinf(\la) & \text{if $p$ is odd}, \\[1mm]
2^{l(\la)-1} P_{\infty,2}^{\Sym}(\la) & \text{if $p=2$}.
\end{cases}
\]
\end{conj}

In Section~\ref{sec_data} we present computational evidence in support of this conjecture. 
A result of Koplewitz implies that the condition $\alpha>\frac{1}{p}$ is necessary.

\begin{thm}[{\!\!\cite[Theorem~1]{koplewitz2023sandpile}}]
Let $p$ be a prime, $0<\alpha<1$ and $0<u<1$.
Then
\[
\lim_{n\to\infty}\E\big(\rank\big((S_{G_{\alpha}(n,u)})_p\big)\big)
=\begin{cases}
O(1) &\text{if $\frac{1}{p}<\alpha<1$},\\[4pt]
\sqrt{\frac{(p-1)n}{2\pi p^2}} +O(1) &\text{if $\alpha=\frac{1}{p}$},\\[4pt]
\big(\frac{1}{p}-\alpha\big)n +O(1) &\text{if $\alpha<\frac{1}{p}$}.
\end{cases}
\]
\end{thm}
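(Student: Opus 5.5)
The plan is to reduce the statement to a rank computation over $\mathbb{F}_p$ and to locate a deterministic source of corank coming from vertices whose degree is divisible by $p$. Write the Laplacian of $G_\alpha(n,u)$ in block form
\[
L=\begin{pmatrix} D_1 & -A\\ -A^{T} & D_2\end{pmatrix},
\]
where $A$ is the $n\times\ceil{\alpha n}$ biadjacency matrix and $D_1,D_2$ are the diagonal degree matrices of $V_1,V_2$. With probability tending to $1$ (and with exponentially small exceptional probability, so expectations are unaffected) the graph is connected. In that case $\rank\big((S_{G_\alpha(n,u)})_p\big)=\dim_{\mathbb{F}_p}\ker(L\bmod p)-1$, since the $p$-rank of the cokernel of the reduced Laplacian equals the corank of $L$ over $\mathbb{F}_p$ minus one. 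So it suffices to estimate $\E\,\mathrm{corank}_{\mathbb{F}_p}(\bar L)$ up to $O(1)$.

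\textbf{Lower bound.} Let $Z_1\subseteq V_1$ be the set of vertices whose degree is $\equiv 0\pmod p$. The column of $\bar L$ indexed by $v\in Z_1$ has zero entries in all $V_1$-rows, because the only potentially nonzero $V_1$-entry is the diagonal one $\deg v\equiv0$. These $|Z_1|$ columns therefore lie in the $\ceil{\alpha n}$-dimensional coordinate space spanned by the $V_2$-rows, which forces
\[
\mathrm{corank}(\bar L)\geq \max\big(0,|Z_1|-\ceil{\alpha n}\big).
\]
Each $\deg v\sim\mathrm{Bin}(\ceil{\alpha n},u)$ is asymptotically equidistributed mod $p$, by a local limit theorem or by computing $\E\,\zeta^{\deg v}$ for $p$-th roots of unity $\zeta$. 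The degrees of distinct $V_1$-vertices are independent, since they involve disjoint edge sets. Hence $|Z_1|$ is, up to exponentially small error, a $\mathrm{Bin}(n,1/p)$ variable, with mean $n/p$ and variance $n\frac1p(1-\frac1p)$. Then $\E\max(0,|Z_1|-\alpha n)$ gives exactly the three regimes:
\begin{itemize}
\item it is $O(1)$, indeed exponentially small, when $\alpha>1/p$;
\item it is $\sigma/\sqrt{2\pi}=\sqrt{(p-1)n/(2\pi p^2)}+O(1)$ by the CLT with a Berry--Esseen correction when $\alpha=1/p$;
\item it is $(1/p-\alpha)n+O(1)$ when $\alpha<1/p$.
\end{itemize}
The analogous set $Z_2\subseteq V_2$ has size about $\alpha n/p<n$, so it forces nothing.

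\textbf{Upper bound.} It remains to show $\E\big[\mathrm{corank}(\bar L)-\max(0,|Z_1|-\ceil{\alpha n})\big]=O(1)$. I would condition on the degree residues, or more concretely on the sets $Z_1,Z_2$. I would then show that the matrix obtained from $\bar L$ by removing the forced dependencies has corank with bounded expectation. One way is to eliminate the rows and columns with invertible diagonal entries through a Schur complement. This leaves a matrix built from $A$ restricted to $Z_1\times V_2$ and $V_1\times Z_2$, plus corrections, and it should behave like a random rectangular or symmetric-type matrix over $\mathbb{F}_p$. For such matrices the expected excess corank beyond $\max(0,\#\text{cols}-\#\text{rows})$ is $O(1)$; this follows from the standard estimate $\P(\text{corank excess}\geq k)\leq C p^{-ck^2}$, or from a moment bound. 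That bound would come from counting $\E\,\#\ker$, i.e. $\E\, p^{\mathrm{corank}}$ over vectors $x$ with $\bar L x=0$, split according to the support size of $x$.

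\textbf{Main obstacle.} The difficult step is the upper bound. The entries of $A$ are not independent of the conditioning on degrees mod $p$, and the Laplacian structure introduces dependencies, for example the all-ones kernel vector. I would handle this as in Wood's and Mészáros's moment computations. First, bound $\E\,|\ker\bar L|$ restricted to a complement of the forced kernel by summing, over candidate vectors $x$, the probability that $\bar L x=0$. For $x$ with large support, a Littlewood--Offord/Halász type anti-concentration over $\mathbb{F}_p$ makes each row condition cost a factor close to $1/p$. Vectors $x$ with small support are exactly the ones supported near $Z_1$ or spanned by the trivial kernel, and these would be counted explicitly. Together these give $\E\, p^{\mathrm{corank}-\max(0,|Z_1|-\ceil{\alpha n})}=O(1)$, which implies the required $O(1)$ error in all three regimes.
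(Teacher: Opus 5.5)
The paper gives no proof of this statement; it is quoted from Koplewitz, so your proposal has to stand on its own. Its lower bound does. For $v\in Z_1$ the column of $\bar L$ vanishes on $V_1$, so $\mathrm{corank}(\bar L)\geq\max(0,\abs{Z_1}-\ceil{\alpha n})$. Moreover $\abs{Z_1}$ is exactly binomial with parameter $1/p+O(\rho^n)$ for some $\rho<1$, and $\E\max(0,\abs{Z_1}-\ceil{\alpha n})$ has the three stated main terms. (At $\alpha=1/p$, integrating the tail with the uniform Berry--Esseen bound only gives an $O(\sqrt{\log n})$ error. Use the non-uniform version, or de Moivre's exact formula for the mean absolute deviation of a binomial, to get $O(1)$.) So you have the right mechanism and the right main terms. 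The gap is the upper bound $\E\big[\mathrm{corank}(\bar L)-\max(0,\abs{Z_1}-\ceil{\alpha n})\big]=O(1)$. This is the real content of the theorem, and you offer a list of tools rather than an argument; several steps of that plan would fail as stated.

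First, the split by support size misidentifies the structured vectors. The $V_1$-equation at $v$ has coefficient vector $(x_1(v)-x_2(w))_{w\in V_2}$, whose spread depends on $x_2$, not on the support of $x$. If $x_2\equiv c$ and $x_1-c$ is supported on $Z_1$, then $(\bar Lx)_v=\deg(v)\,(x_1(v)-c)=0$ for every $v\in V_1$. This family of $p^{\abs{Z_1}+1}$ vectors, typically of support size $\Theta(n)$, faces only the $\ceil{\alpha n}$ equations indexed by $V_2$. That is exactly where $\max(0,\abs{Z_1}-\ceil{\alpha n})$ comes from, and ``each row costs about $1/p$'' is false for it. Even for generic $x$ you cannot multiply per-row anti-concentration bounds: the $V_2$-equations use the same variables $A_{vw}$ as the $V_1$-equations, and the diagonal is a function of them. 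One needs a joint Fourier computation of $\P(\bar Lx=0)$ in the style of Wood's Laplacian moment computation, here with each row of $A$ also conditioned on its sum modulo $p$.

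Second, the forced kernel is random, so $\E\,p^{\mathrm{corank}(\bar L)-\max(0,\abs{Z_1}-\ceil{\alpha n})}$ cannot be obtained by summing $\P(\bar Lx=0)$ over a fixed set of vectors. Without conditioning, the first moment really is too large: $\E\,p^{\mathrm{corank}(\bar L)}\geq\E\,p^{\abs{Z_1}-\ceil{\alpha n}}\asymp\big((2-1/p)\,p^{-\alpha}\big)^{n}$, which grows exponentially for $1/p<\alpha<\log_p(2-1/p)$. This is consistent with the blow-up in the paper's experiments, e.g.\ $\E(3^{\rank})\approx 1110$ at $\alpha=0.4$. You would have to do three things:
\begin{enumerate}
\item condition on $Z_1$ alone (conditioning on $Z_2$ as well destroys the independence of the rows of $A$);
\item prove $\E\big[p^{\mathrm{corank}(\bar L)}\mid Z_1\big]\leq C\,p^{\max(0,\abs{Z_1}-\ceil{\alpha n})}$ off a set of $Z_1$ of probability $O(1/n)$;
\item finish with Jensen.
\end{enumerate}

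Third, with $N_1=V_1\setminus Z_1$, your Schur complement leaves the block $D_2-A_{N_1}^{T}D_{N_1}^{-1}A_{N_1}$. This is $\mathrm{diag}(A_{Z_1}^{T}\mathbf{1})$ plus a dense weighted sum of clique Laplacians on $V_2$. When $\abs{Z_1}<\ceil{\alpha n}$ and $A_{Z_1}$ has full rank, the corank is that of the map $\ker A_{Z_1}\to\mathbb{F}_p^{\ceil{\alpha n}}/\operatorname{im}A_{Z_1}^{T}$ induced by this block. No standard estimate for random matrices with independent entries applies to it.

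As written, the proposal proves only the lower bound.
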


In fact, his arguments show that when $\alpha<\frac{1}{p}$, for any $G$,
\[
\lim_{n\to\infty} \P\big((S_{G_{\alpha}(n,u)})_p \cong G\big)=0.
\]
Conjecture~\ref{p-Syl_conj} is related to the work of Bhargava, dePascale and Koenig on the distribution of the ranks of sandpile groups of random directed bipartite graphs \cite{bhargava2023rank}.
In forthcoming work \cite{Singhal_sandpile} by the third author, a proof of Conjecture~\ref{p-Syl_conj} for odd primes $p$ will be presented.

\subsection{Distributions of partitions with the same moments}

A major focus of this paper is the two distributions corresponding to
Theorem~\ref{mes_thm} for even $d$ and $p=2$, and Conjecture~\ref{p-Syl_conj} for $p=2$.
These two distributions are not the same, but are closely related, and
we shall explain how they fit into a family of distributions.
Before defining this family, we recall some material concerning moments of distributions of partitions.

Let $p$ be a prime and $G_{\la}$ and $G_{\mu}$ two finite abelian $p$-groups, indexed by the partitions $\la$ and $\mu$, respectively.
Then we write $\Sur_p(\la,\mu)$ for the set of surjective group homomorphisms from $G_{\la}$ to $G_{\mu}$ and set $\Aut_p(\la)=\Sur_p(\la,\la)$.
Nguyen and Van Peski have shown that $\abs{\Sur_p(\la,\mu)}$ can be expressed in terms of Hall--Littlewood symmetric functions.
For the definitions of the Hall--Littlewood symmetric functions $P_{\la/\mu}$, $P_{\la}$, and $Q_{\mu}$ we refer the reader to Section~\ref{Subsec: defn of sym polynomial}.

\begin{prop}[{\!\!\cite[Proposition 6.2]{nguyen2024universality}}]\label{surjhall}
For partitions $\la,\mu$ and $p$ a prime,
\begin{equation}\label{Surp-size}
\abs{\Sur_p(\la,\mu)}
= \frac{P_{\la/\mu}(q,q^2,\dots;q)}{P_{\la}(q,q^2,\dots;q) Q_{\mu}(1,q,q^2,\dots;q)},
\end{equation}
where we recall that $q:=1/p$.
\end{prop}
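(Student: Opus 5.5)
Both sides of \eqref{Surp-size} can be reduced to explicit formulas in $q=1/p$ and the conjugate partitions $\la',\mu'$, and then compared directly.

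For the left-hand side I will use the classical count of surjections between finite abelian $p$-groups. We have $\abs{\Hom(G_\la,G_\mu)}=p^{\sum_i \la'_i\mu'_i}$. Möbius inversion over subgroups (equivalently, counting $\mu'_1$-tuples of images of generators that generate $G_\mu$) gives
\[
\abs{\Sur_p(\la,\mu)}=\prod_{i\geq1} p^{\mu'_i\la'_i}\,\big(p^{\mu'_{i+1}-\la'_i};p\big)_{m_i(\mu)}.
\]
In particular this vanishes unless $\mu\subseteq\la$, and for $\la=\mu$ it is the standard formula for $\abs{\Aut_p(\la)}$.

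For the right-hand side I will evaluate each Hall--Littlewood factor by principal specialization.

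\textbf{The factor $Q_\mu$.} The specialization $Q_\mu(1,q,q^2,\dots;q)=q^{n(\mu)}\prod_i (q;q)_{m_i(\mu)}^{-1}\cdot b_\mu(q)$ with $b_\mu(q)=\prod_i (q;q)_{m_i(\mu)}$ is known from Macdonald III.2, Example 1. It gives $Q_\mu(1,q,\dots;q)=q^{n(\mu)}$, since $P_\mu(1,q,\dots;q)=q^{n(\mu)}/\prod_i(q;q)_{m_i(\mu)}$.

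\textbf{The factor $P_\la$.} The specialization at $(q,q^2,\dots)$ is obtained by homogeneity: $P_\la(q,q^2,\dots;q)=q^{\abs{\la}+n(\la)}/\prod_i(q;q)_{m_i(\la)}$.

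\textbf{The factor $P_{\la/\mu}$.} This is the main obstacle. My plan is to use the branching rule: $P_{\la/\mu}$ specialized at $(q,q^2,\dots)$ is a sum over sequences of horizontal strips with the explicit coefficients $\psi_{\la/\nu}(q)$. Alternatively, I would use the known identity $P_{\la/\mu}(1,q,q^2,\dots;q)$ = (Hall polynomial sum) interpreted as counting subgroups: $\sum_\nu g^\la_{\mu\nu}(p)\,\cdot(\text{weight of }\nu)$. The cleaner route is to recognize that $Q_\mu P_{\la/\mu}$-type quantities are generating functions of Hall polynomials, giving
\[
P_{\la/\mu}(q,q^2,\dots;q)=\sum_{\nu}f^\la_{\mu\nu}(q)\,P_\nu(q,q^2,\dots;q).
\]
By Hall's theorem, $p^{\ldots}g^\la_{\mu\nu}(p)$ counts subgroups $H\le G_\la$ with $H\cong G_\nu$ and $G_\la/H\cong G_\mu$. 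Weighting by $1/\abs{\Aut}$-type factors then turns the sum into $\#\{H: G_\la/H\cong G_\mu\}$ times normalizations. Finally, $\abs{\Sur_p(\la,\mu)}=\abs{\Aut_p(\mu)}\cdot\#\{H\le G_\la: G_\la/H\cong G_\mu\}$.

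\textbf{Assembly.} I will match the powers of $p$ and the $(q;q)_{m_i}$ factors on both sides. The identity $1/\abs{\Aut_p(\mu)}$-normalization $= q^{n(\mu)}\cdot(\ldots)$ takes care of the $Q_\mu$ denominator. The step I expect to be most delicate is tracking the exponents $n(\la)-n(\nu)-n(\mu)$ against the Hall polynomial normalization $f^\la_{\mu\nu}(q)=q^{n(\la)-n(\mu)-n(\nu)}g^\la_{\mu\nu}(q^{-1})$, where sign and exponent errors are easy to make. If that becomes unwieldy, I would instead check the identity by induction on $\mu'_1$, peeling off one column via the Pieri rule for $P_{\la/\mu}$.
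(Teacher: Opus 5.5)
The paper gives no proof of this proposition; it is quoted from Nguyen--Van Peski. So I am judging your argument on its own terms.

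\textbf{What is correct.} Your specializations $P_\la(q,q^2,\dots;q)=q^{\abs{\la}+n(\la)}/b_\la(q)$ and $Q_\mu(1,q,q^2,\dots;q)=q^{n(\mu)}$ are right. Your product formula for $\abs{\Sur_p(\la,\mu)}$ is also correct. One small point there: a homomorphism out of $G_\la$ is determined by $\la'_1$ images, not $\mu'_1$. The $\mu'_1$-tuple count is the count of injections $G_\mu\hookrightarrow G_\la$, which are equinumerous with surjections by duality.

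\textbf{The gap.} The step that carries all the content is evaluating $P_{\la/\mu}(q,q^2,\dots;q)$, and you never carry it out. You list three routes, and the one you actually develop fails. Take $P_{\la/\mu}=\sum_\nu f^\la_{\mu\nu}(q)P_\nu$ together with $f^\la_{\mu\nu}(q)=q^{n(\la)-n(\mu)-n(\nu)}g^\la_{\mu\nu}(p)$. Then the right-hand side of \eqref{Surp-size} becomes
\[
q^{-2n(\mu)-\abs{\la}}\,b_\la(q)\sum_\nu g^\la_{\mu\nu}(p)\,\frac{q^{\abs{\nu}}}{b_\nu(q)} .
\]
The target is $\abs{\Aut_p(\mu)}\cdot\#\{H\leq G_\la:\ G_\la/H\cong G_\mu\}=q^{-2n(\mu)-\abs{\mu}}b_\mu(q)\sum_\nu g^\la_{\mu\nu}(p)$. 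Term by term, your expression differs from it by the factor $b_\la(q)/(b_\mu(q)b_\nu(q))$. That factor is not $1$ and varies with $\nu$, so no ``normalization'' turns your sum into a subgroup count.

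Concretely, take $\la=(1,1)$ and $\mu=(1)$. Only $\nu=(1)$ occurs, with $f^{(1,1)}_{(1)(1)}=1+q$. Your route gives $(1+q)(p^2-1)$ instead of $p^2-1$. The identity is false for that normalization of the skew function.

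\textbf{The fix.} What is missing is Macdonald's normalization, the one under which \eqref{Eqn: Skew Q to P}, \eqref{Eqn: Q la minus mu} and the skew Cauchy identity hold. The expansion written in Section~\ref{Subsec: defn of sym polynomial} is incompatible with \eqref{Eqn: Q la minus mu}, as the same example shows. Macdonald defines $P_{\la/\mu}$ by $\langle P_{\la/\mu},Q_\nu\rangle=\langle P_\la,Q_\mu Q_\nu\rangle$, that is,
\[
P_{\la/\mu}=\frac{b_\mu}{b_\la}\,Q_{\la/\mu}=\sum_\nu \frac{b_\mu(q)\,b_\nu(q)}{b_\la(q)}\,f^\la_{\mu\nu}(q)\,P_\nu .
\]
With this, $b_\nu(q)$ cancels against $P_\nu(q,q^2,\dots;q)=q^{\abs{\nu}+n(\nu)}/b_\nu(q)$. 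Also $\abs{\nu}=\abs{\la}-\abs{\mu}$ whenever $g^\la_{\mu\nu}\neq 0$. So the right-hand side of \eqref{Surp-size} collapses to $q^{-2n(\mu)-\abs{\mu}}b_\mu(q)\sum_\nu g^\la_{\mu\nu}(p)$, which equals $\abs{\Sur_p(\la,\mu)}$. This is the clean proof your sketch was aiming at.

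\textbf{An alternative using your product formula.} Your explicit formula also finishes the job if you take \eqref{Eqn: Q la minus mu} as input. Use the two identities
\[
(p^{\mu'_{i+1}-\la'_i};p)_{m_i(\mu)}=(q;q)_{m_i(\mu)}\qbin{\la'_i-\mu'_{i+1}}{\la'_i-\mu'_i}_q
\quad\text{and}\quad
n(\la/\mu)-n(\la)-n(\mu)-\abs{\mu}=-\sum_i\la'_i\mu'_i .
\]
Then both sides equal $p^{\sum_i\la'_i\mu'_i}\,b_\mu(q)\prod_i\qbin{\la'_i-\mu'_{i+1}}{\la'_i-\mu'_i}_q$. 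By contrast, your branching-rule and Pieri-induction options are not yet plans. Evaluating the branching sum at $(q,q^2,\dots)$ amounts to rederiving that Warnaar--Zudilin specialization.
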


When $\mu\not\subseteq\la$, then $P_{\la/\mu}$ is zero, so $\abs{\Sur_p(\la,\mu)}=0$ as expected.
Although $\Sur_p(\la,\mu)$ is not defined when $p$ is not a prime, \eqref{Surp-size} may be used to define $\abs{\Sur_p(\la,\mu)}$ for all $p>1$.
Therefore, given a real number $p>1$, a measure $\nu$ on the set of partitions, and a partition $\mu$, we can define the \emph{$\mu$-moment} of $\nu$ by
\begin{equation}\label{mumoment}
M_{p,\mu}(\nu):=\sum_{\la}\nu(\la)\abs{\Sur_p(\la,\mu)}.
\end{equation}
When $p$ is prime, this is the expected number of surjections from a random group chosen from $\nu$ to the fixed finite abelian $p$-group $G_{\mu}$.

Clancy, Kaplan, Leake, Payne, and Wood computed the moments of the distribution $\msympinf$.

\begin{thm}[{\!\!\cite[Theorem 11]{clancy2015cohen}}]\label{Thm: moment P sym inf}
Let $p>1$ be a real number and $\mu$ a partition.
Then the $\mu$-moment of $\msympinf$ is
\[
M_{p,\mu}\big(\msympinf\big)=p^{n(\mu)}.
\]
Moreover, when $p$ is prime, this can also be expressed as $\abs{\wedge^2 G_{\mu}}$.
\end{thm}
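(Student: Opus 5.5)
We want to show that for every partition $\mu$,
\[
\sum_{\la}\msympinf(\la)\,\abs{\Sur_p(\la,\mu)}=p^{n(\mu)} .
\]
The plan is to substitute the Hall--Littlewood expression of Proposition~\ref{surjhall} and evaluate the resulting sum over $\la$ with a Cauchy/Littlewood-type identity. Since every quantity involved is a rational function of $q$ (or a convergent power series in $q$), it suffices to prove the identity as an identity of functions of $q\in(0,1)$; the case of prime $p$ is then a specialization.

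\textbf{Step 1: rewrite the summand.} By the principal specialization $P_\la(q,q^2,\dots;q)=q^{n(\la)+\abs{\la}}/\prod_{i}(q;q)_{m_i(\la)}$, the factor $q^{n(\la)+\abs{\la}}$ in $\msympinf(\la)$ cancels against the denominator of \eqref{Surp-size}. What remains is
\[
M_{p,\mu}\big(\msympinf\big)=\frac{(q;q^2)_\infty}{Q_\mu(1,q,q^2,\dots;q)}\sum_{\la}P_{\la/\mu}(x;q)\prod_{i\geq1}\frac{(q;q)_{m_i(\la)}}{(q^2;q^2)_{\floor{m_i(\la)/2}}},
\]
where $x=(q,q^2,\dots)$.

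\textbf{Step 2: recognize the weight.} The weight $\prod_i (q;q)_{m_i}/(q^2;q^2)_{\floor{m_i/2}}=\prod_i (q;q^2)_{\ceil{m_i/2}}$ is precisely the coefficient in Macdonald's Littlewood-type identity (Macdonald, Ch.~III, \S5, Ex.~1):
\[
\sum_\la \prod_i (t;t^2)_{\ceil{m_i(\la)/2}}\,P_\la(x;t)=\prod_i\frac{1-tx_i}{1-x_i}\prod_{i<j}\frac{1-tx_ix_j}{1-x_ix_j},
\]
here with $t=q$. We need the skew version, which sums $P_{\la/\mu}$ against the same weight. I would derive it by applying the Littlewood identity in two alphabets $(x,y)$, expanding $P_\la(x,y)=\sum_\mu P_{\la/\mu}(x)P_\mu(y)$, and comparing coefficients of $P_\mu(y)$. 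This should give
\[
\sum_\la w(\la)P_{\la/\mu}(x)=\Phi(x)\sum_\nu w(\nu)\,(\text{a Cauchy-type coefficient in }x)\ldots .
\]
Extracting a clean formula from this is the main obstacle. Alternatively, I would adapt the Pfaffian/Schur-$Q$ techniques used for symmetric matrices, which produce
\[
\sum_\la w(\la)P_{\la/\mu}(x)=\Phi(x)\,Q'_\mu(x),
\]
with $\Phi$ the Littlewood product and $Q'_\mu$ a dual-type function.

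\textbf{Step 3: specialize and simplify.} Setting $x_i=q^i$, the product $\Phi$ telescopes. The single products give $\prod_i (1-q^{i+1})/(1-q^i)=1/(1-q)$, and the pair products should combine with $(q;q^2)_\infty$ to leave a simple constant. The remaining factor must then equal $p^{n(\mu)}Q_\mu(1,q,\dots;q)$ up to that constant, so the specialized $\mu$-dependence has to cancel $Q_\mu(1,q,\dots)$, leaving $q^{-n(\mu)}=p^{n(\mu)}$.

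\textbf{Fallback.} If the skew identity proves hard to pin down, I would instead prove the result for prime $p$ directly and extend it to real $p>1$ by the rational-function argument above. For prime $p$, write $\P(\cok A\cong G_\la)\abs{\Sur(G_\la,G_\mu)}$, sum over $\la$ to get $\sum_{F}\P(F\circ A=0)$ over surjections $F:\Z_p^n\to G_\mu$, and note that for a Haar-random symmetric $A$ the condition $F A=0$ has probability $\abs{\Sym^2\text{-type constraint}}^{-1}=\abs{G_\mu}^{-n}\abs{\wedge^2G_\mu}$. This gives moment $\abs{\wedge^2 G_\mu}=p^{n(\mu)}$ in the limit $n\to\infty$, and the limit interchange is justified by dominated convergence using the bound $\abs{\Sur_p(\la,\mu)}\leq\abs{G_\mu}^{l(\la)}$ together with the decay of $\msympn$.

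The "moreover" statement, that the moment equals $\abs{\wedge^2 G_\mu}$, is immediate from the earlier fact $\abs{\wedge^2 G_\mu}=p^{n(\mu)}$.
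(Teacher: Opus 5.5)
\textbf{There is a genuine gap: the one nontrivial step of your main route is never proved, and the fallback does not cover real $p>1$.}

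Your Steps~1 and~2 are set up correctly. After Step~1 the theorem is equivalent to the single identity
\[
\sum_{\la}\prod_{i\geq 1}(q;q^2)_{\ceil{m_i(\la)/2}}\,P_{\la/\mu}(q,q^2,\dots;q)=\frac{1}{(q;q^2)_{\infty}}\qquad\text{for every partition }\mu,
\]
and this is exactly what the proposal does not establish. You leave the skew Littlewood identity open. Step~3 then asserts that the remaining $\mu$-dependent factor ``must'' cancel $Q_\mu(1,q,\dots;q)$, which assumes the conclusion. If you carry out your two-alphabet argument, the right side becomes $\Phi(x)\sum_{\nu}w(\nu)Q_{\mu/\nu}(x;q)$, and you would still owe the identity $\sum_{\nu}w(\nu)Q_{\mu/\nu}(q,q^2,\dots;q)=1$.

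The missing idea is to specialize $x=(q,q^2,\dots)$ \emph{before} extracting coefficients:
\begin{itemize}
\item In $\sum_\la w(\la)P_\la(x,y;q)=\Phi(x)\Phi(y)\prod_{i,j}\frac{1-qx_iy_j}{1-x_iy_j}$, the cross product telescopes to $\prod_j(1-qy_j)^{-1}$.
\item This cancels the factors $1-qy_j$ in $\Phi(y)$ and leaves $\prod_j(1-y_j)^{-1}\prod_{j<k}\frac{1-qy_jy_k}{1-y_jy_k}=\sum_\mu P_\mu(y;q)$. That is the unweighted Littlewood identity from the same section of Macdonald.
\item Also $\Phi(q,q^2,\dots)=1/(q;q^2)_\infty$. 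The single product telescopes to $1/(1-q)$, and for fixed $i$ the product over $j>i$ telescopes to $1/(1-q^{2i+1})$.
\item Expanding with the branching rule $P_\la(x,y)=\sum_\mu P_{\la/\mu}(x)P_\mu(y)$ and comparing coefficients of $P_\mu(y)$ gives the displayed identity. All terms are nonnegative, so the rearrangements are harmless.
\end{itemize}

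With that step supplied, your route is a genuinely different proof from the paper's. The paper factors $\msympinf(\la)\abs{\Sur_p(\la,\mu)}$ into the transition kernels $K$ (Lemma~\ref{Lem: PSym inf times sur product}). It then collapses the sum column by column via $\sum_b K(a,b)=1$ (Lemma~\ref{Kabsum}); this is the case $\kappa=0$ of Theorem~\ref{Thm: moments of sym k1 dots km not normalized}. Your repaired argument replaces that Markov-chain bookkeeping and the supporting $q$-series lemmas with two classical Littlewood identities.

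The fallback does not close the gap either.
\begin{itemize}
\item For prime $p$ it is the standard Clancy--Kaplan--Leake--Payne--Wood/Wood argument. Its key fact, that $A\mapsto FA$ maps $\Sym_n(\Z_p)$ onto a subgroup of index $\abs{\wedge^2G_\mu}$ in $G_\mu^n$, is only gestured at.
\item The statement is for all real $p>1$, and the ``rational-function argument'' does not transfer it. $M_{p,\mu}(\msympinf)$ is an infinite series in $q$, not a rational function.
\item The points $q=1/p$, $p$ prime, accumulate only at $q=0$. So agreement there pins the function down only by analytic continuation.
\item That requires showing that $q^{n(\mu)}M_{p,\mu}(\msympinf)$, as a function of $q$, is analytic on a connected domain containing $0$ and $(0,1)$. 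For instance, you could prove locally uniform convergence of the series on $\abs{q}<1$; no such estimate is given.
\end{itemize}
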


Letting $d$ and $n$ go to infinity in Theorem~\ref{thm: moments of Garton} below gives a new proof of Theorem~\ref{Thm: moment P sym inf}, valid for all real $p>1$.
Wood showed that two distinct measures cannot have the same moments provided the moments are not too large.

\begin{thm}[{\!\!\cite[Theorem 8.2]{wood2017distribution}}]\label{Thm: Wood universality}
Let $p>1$ be a real number and $\nu_1,\nu_2$ a pair of measures on the set of partitions.
Suppose that for every $\mu$,
\[
M_{p,\mu}(\nu_1)=M_{p,\mu}(\nu_2)\leq p^{n(\mu)}.
\]
Then $\nu_1=\nu_2$.
\end{thm}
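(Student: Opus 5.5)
The plan is the standard uniqueness argument for the moment problem: reduce to finitely supported truncations and run a triangular inversion. Set $\delta(\la)=\nu_1(\la)-\nu_2(\la)$. By hypothesis,
\[
\sum_{\la}\delta(\la)\abs{\Sur_p(\la,\mu)}=0
\]
for every $\mu$, and each sum converges absolutely since $\nu_i\ge 0$ and the moments are finite. The goal is to prove $\delta\equiv 0$.

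The key tool is an inversion formula. We want coefficients $c(\mu,\rho)$ with
\[
\sum_{\mu\subseteq\rho} c(\mu,\rho)\abs{\Sur_p(\la,\mu)}=\mathbbm{1}[\la=\rho]
\]
for all $\la$. The natural candidate comes from Möbius inversion on the lattice of subgroups/quotients: $\abs{\Hom(G_\la,G_\mu)}=\sum_{\rho}\abs{\Sur_p(\la,\rho)}\cdot\#\{\text{subgroups of } G_\mu \text{ of type } \rho\}$. For real $p$, the analogous identities are expressed through Hall--Littlewood skew expansions, namely the branching rule $P_{\la/\nu}=\sum P_{\la/\mu}P_{\mu/\nu}$ together with Proposition~\ref{surjhall}. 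To recover $\nu(\rho)$ from the moments, we formally multiply the moment identity by $c(\mu,\rho)$ and sum over $\mu$, obtaining
\[
\delta(\rho)=\sum_{\mu}c(\mu,\rho)\sum_{\la}\delta(\la)\abs{\Sur_p(\la,\mu)}=0 .
\]
The catch is that $c(\mu,\rho)$ must be summed over infinitely many $\mu\supseteq\rho$, so the two sums can be interchanged only under absolute convergence.

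The main obstacle is exactly this convergence. Following Wood, we would bound
\[
\sum_{\mu}\abs{c(\mu,\rho)}\sum_{\la}\nu_i(\la)\abs{\Sur_p(\la,\mu)}\le\sum_{\mu\supseteq\rho}\abs{c(\mu,\rho)}\,p^{n(\mu)}
\]
using the hypothesis $M_{p,\mu}(\nu_i)\le p^{n(\mu)}$, and then show that the right-hand side converges. For this one needs an explicit estimate on the inversion coefficients, something like
\[
\abs{c(\mu,\rho)}\lesssim \frac{p^{-n(\mu)}\,\epsilon^{\abs{\mu}-\abs{\rho}}}{\abs{\Aut_p(\mu)}}
\]
with some room to spare. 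Bounding $\abs{\Aut_p(\mu)}$ from below by $p^{\sum_i \mu_i'^2}$ times a constant depending only on $q$, and using that $\sum_i\mu_i'^2-n(\mu)$ grows at least like $\abs{\mu}$, should make the sum over $\mu$ summable.

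A cleaner alternative, which we would try first, is Wood's actual route. Prove the statement with a slightly stronger bound $O(F(\mu))$, where $F$ is chosen so that $\sum_\mu F(\mu)/\abs{\Aut_p(\mu)}$-type sums converge. Then handle the boundary case $p^{n(\mu)}$ by noting that $\sum_{\mu\supseteq\rho}p^{n(\mu)}\abs{c(\mu,\rho)}$ still converges, because $\abs{c(\mu,\rho)}$ decays like $p^{-\sum_i(\mu'_i-\rho'_i)\mu'_i}$, which beats $p^{n(\mu)}=p^{\sum_i\binom{\mu'_i}{2}}$. Once absolute convergence is secured, Fubini justifies the interchange and gives $\delta(\rho)=0$ for every $\rho$, hence $\nu_1=\nu_2$.
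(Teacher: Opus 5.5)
The paper does not prove this statement; it quotes it from Wood, so your proposal has to stand on its own. Your architecture is sound. $\abs{\Sur_p(\la,\mu)}$ vanishes unless $\mu\subseteq\la$, and its diagonal entries $\abs{\Aut_p(\la)}$ are nonzero. So it has an inverse $c(\mu,\rho)$ in the incidence algebra of containment, supported on $\mu\supseteq\rho$ (your first display has the inclusion reversed). Fubini then gives $\nu_1(\rho)=\nu_2(\rho)$ once $\sum_{\mu}\abs{c(\mu,\rho)}\,p^{n(\mu)}<\infty$.

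\textbf{The gap.} That convergence is the entire content of the theorem, and you leave it as a guess whose decay rates are false. Suppose $\abs{c(\mu,\rho)}$ decayed like $p^{-\sum_i(\mu'_i-\rho'_i)\mu'_i}$, or satisfied your first bound $p^{-n(\mu)}\epsilon^{\abs{\mu}-\abs{\rho}}/\abs{\Aut_p(\mu)}$. Then $\sum_{\mu}\abs{c(\mu,\rho)}\,p^{n(\mu)+k\,l(\mu)}$ would converge for every $k\geq 1$, and your argument would prove that moments $p^{n(\mu)+k\,l(\mu)}$ determine the measure. This contradicts Theorem~\ref{Thm_r=1_one-par-family}: the measures $\qpmk$, $\alpha\in[-1,1]$, all have exactly these moments, yet they differ (already at $\la=0$, where $\qpmk(0)$ is proportional to $1+\alpha$). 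Also, $\abs{\Aut_p(\mu)}=p^{\sum_i\mu_i'^2}b_\mu(q)$, and $b_\mu(q)$ is not bounded below by a constant depending only on $q$; it tends to $0$ along staircase partitions.

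\textbf{What closes the gap} is the exact inverse. By \eqref{Sur1} and the skew Cauchy identity \eqref{Eqn: Skew Cauchy identity} with $y=(1,q,q^2,\dots)$, $\abs{\Sur_p(\la,\mu)}$ equals $\abs{\Aut_p(\mu)}\,p^{n(\la)-n(\mu)}$ times the coefficient of $P_\la$ in $P_\mu\prod_i(1-x_i)^{-1}$. Invert this using $\prod_i(1-x_i)=\sum_n(-1)^n e_n$ and the Pieri rule for $P_\rho e_n$. This gives, for every real $p>1$,
\[
c(\mu,\rho)=\frac{(-1)^{\abs{\mu}-\abs{\rho}}\,p^{n(\mu)-n(\rho)}}{\abs{\Aut_p(\mu)}}\prod_{i\geq 1}\qbin{m_i(\mu)}{\mu'_i-\rho'_i}_q
\]
if $\mu/\rho$ is a vertical strip, and $c(\mu,\rho)=0$ otherwise. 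Since $\abs{\Aut_p(\mu)}=p^{2n(\mu)+\abs{\mu}}b_\mu(q)$,
\[
\abs{c(\mu,\rho)}\,p^{n(\mu)}=\frac{p^{-\abs{\mu}-n(\rho)}}{b_\mu(q)}\prod_{i\geq 1}\qbin{m_i(\mu)}{\mu'_i-\rho'_i}_q .
\]
The vertical-strip condition forces $\mu_1\leq\rho_1+1$, so $\mu$ has at most $\rho_1+1$ distinct part sizes. Hence the right-hand side is at most $C_\rho\,p^{-\abs{\mu}}$, and at most $\binom{n+\rho_1}{\rho_1}$ such $\mu$ have $\abs{\mu}=\abs{\rho}+n$. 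The series therefore converges and your Fubini step goes through.

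Note that the margin is exactly $p^{-\abs{\mu}}$, not the generous room you anticipated. For $\rho=0$ and $\mu=(1^n)$ one gets $\abs{c(\mu,0)}\,p^{n(\mu)+l(\mu)}=1/(q;q)_n$, which does not decay. This matches the failure of uniqueness just above the threshold $p^{n(\mu)}$.
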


The theorem implies that $\msympinf$ is the only measure whose $\mu$-moment is $p^{n(\mu)}$ for every $\mu$.
However, moments no longer determine the measure for larger values of the moments.
A first example is given by the distribution $\nu$ that corresponds to Theorem~\ref{mes_thm} for even $d$ and $p=2$.
For this distribution, $M_{p,\mu}(\nu) = 2^{n(\mu)+l(\mu)}$, see \cite[Lemmas 8.11 \& 8.12]{meszaros2020distribution}, and so we cannot apply Theorem~\ref{Thm: Wood universality}.
In fact, there is a one-parameter family of distributions sharing the same moments that includes the $p=2$ special case of Conjecture~\ref{p-Syl_conj}.
We generalize this result by constructing large families of distributions that share the same moments.
These are interesting to consider in light of recent advances in the study of the method of moments.
For an overview of these developments, see the ICM notes of Wood \cite{wood2022probability}, and work of Sawin and Wood in which they consider the question of when a collection of moments determines a unique distribution in a very general setting~\cite{SawinWood}.

We construct families of measures such that all measures within the same family have the same moments.
For $\kappa$ a partition, define the measure
\begin{equation}\label{mktilda}
\mktilda(\la):=p^{\sum_{i\geq 1} \kappa_i\la'_i} \msympinf(\la).
\end{equation}
Let $[r]:=\{1,\dots,r\}$.
Then for $\kappa$ a partition of length $r$ and $S\subseteq [r]$, we further define
\begin{equation}\label{signed}
\mkStilda(\la):=\Big(1+(-1)^{\sum_{i\in S}\la'_i} \Big)\mktilda(\la).
\end{equation}

\begin{thm}\label{Thm: Same moments S odd size}
Let $p>1$ be a real number and $\mu,\kappa$ partitions.
Then there exists a constant $C=C_{\kappa}$, independent of $\mu$, such that
\[
M_{p,\mu}\big(\mktilda\big) \leq C p^{n(\mu)+\abs{\kappa}\,l(\mu)}.
\]
Moreover, if $\kappa$ is strict and $r:=l(\kappa)$, then for every subset $S\subseteq [r]$ of odd size,
\begin{equation}\label{equalmeasure}
M_{p,\mu}\big(\mkStilda\big)=M_{p,\mu}\big(\mktilda\big).
\end{equation}
\end{thm}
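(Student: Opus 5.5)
The plan is to treat both parts of Theorem~\ref{Thm: Same moments S odd size} through one generating function. For signs $\varepsilon=(\varepsilon_1,\dots,\varepsilon_r)\in\{\pm1\}^r$ set
\[
F_\mu(\varepsilon):=\sum_{\la}\Big(\prod_{i=1}^r(\varepsilon_ip^{\kappa_i})^{\la'_i}\Big)\msympinf(\la)\,\abs{\Sur_p(\la,\mu)}.
\]
Then $M_{p,\mu}(\mktilda)=F_\mu(1,\dots,1)$. Writing $\varepsilon^S$ for the sign vector with $\varepsilon_i=-1$ exactly for $i\in S$, \eqref{signed} gives
\[
M_{p,\mu}(\mkStilda)=F_\mu(1,\dots,1)+F_\mu(\varepsilon^S).
\]
So \eqref{equalmeasure} is equivalent to the vanishing statement $F_\mu(\varepsilon^S)=0$ whenever $\abs{S}$ is odd. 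I will aim to evaluate $F_\mu(\varepsilon)$ in closed form, or at least as a finite expression times an explicit product, and read off both the bound and the vanishing.

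First I rewrite the summand using Proposition~\ref{surjhall} together with the principal specialization $P_\la(q,q^2,\dots;q)=q^{n(\la)+\abs{\la}}/\prod_i(q;q)_{m_i(\la)}$. This gives
\[
\frac{\msympinf(\la)}{P_\la(q,q^2,\dots;q)}=(q;q^2)_\infty\prod_{i\geq1}\frac{(q;q)_{m_i(\la)}}{(q^2;q^2)_{\floor{m_i(\la)/2}}}=(q;q^2)_\infty\prod_{i\geq1}(q;q^2)_{\ceil{m_i(\la)/2}}.
\]
Hence
\[
F_\mu(\varepsilon)=\frac{(q;q^2)_\infty}{Q_\mu(1,q,\dots;q)}\sum_{\la}\Big(\prod_i(q;q^2)_{\ceil{m_i(\la)/2}}\Big)\Big(\prod_{i}(\varepsilon_ip^{\kappa_i})^{\la'_i}\Big)P_{\la/\mu}(q,q^2,\dots;q).
\]
The coefficients $\prod_i(q;q^2)_{\ceil{m_i/2}}$ are exactly those in the Littlewood-type identity for Hall--Littlewood functions with $t=q$ (Macdonald, Ch.~III, Ex.~5). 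The column weight $\prod_i z_i^{\la'_i}$ with $z_i=\varepsilon_ip^{\kappa_i}$ can be written as $\prod_j (z_1\cdots z_j)^{m_j(\la)}$. I would therefore use a skew, bounded-column-weighted version of that Littlewood identity, in the style of Warnaar's Rogers--Ramanujan-type HL identities, to perform the sum over $\la\supseteq\mu$. The expected output is a finite sum over intermediate partitions, indexed by at most $r$ levels, times an infinite product in which the $z_i$ appear through factors like $(-z_1\cdots z_j q^{a};q^{b})_\infty$ or through $q$-binomial sums in the $z_i$.

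Given such a formula, the bound is routine. Each factor $p^{\kappa_i\la'_i}$ is at most the corresponding factor obtained by taking $\la'_i$ as large as $\mu'_i$ plus the contribution from parts beyond $\mu$. Summing the geometric-type tails produces $p^{n(\mu)}\cdot p^{\abs{\kappa}l(\mu)}$ times a constant depending only on $\kappa$: the moments of $\msympinf$ give $p^{n(\mu)}$ by Theorem~\ref{Thm: moment P sym inf}, and the extra factor is controlled by $p^{\sum_i\kappa_i\mu'_i}\leq p^{\abs{\kappa}l(\mu)}$. Alternatively, I could bound the weighted moment directly by Cauchy--Schwarz-free monotonicity in the $z_i$.

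The main obstacle is the vanishing $F_\mu(\varepsilon^S)=0$ for $\abs{S}$ odd, and I expect strictness of $\kappa$ to be essential there. My first attempt is to locate, in the closed form, a factor that is a polynomial in the $z_i$ which is odd under flipping an odd number of signs when the exponents $\kappa_i$ are distinct. One candidate is a product like $\prod_{i<j}(1+z_iz_j^{-1}q^{\kappa_j-\kappa_i}\cdots)$, or a factor $(1+\prod_i\varepsilon_i)$ arising from the $\floor{m/2}$ parity structure. If no closed form is within reach, the fallback is a sign-reversing involution on partitions $\la\supseteq\mu$. This would toggle $\la'_i$ parity in one column $i\in S$ by moving a box between column $\kappa$-levels, and strictness of $\kappa$ would guarantee that the weights $p^{\kappa_i\la'_i}$ and the factors $(q;q^2)_{\ceil{m_i/2}}$ match up in pairs.
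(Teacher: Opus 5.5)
\textbf{Verdict: there is a genuine gap.} The proposal sets up the right reduction but contains no complete argument for either part of the theorem.

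\textbf{What is correct.} Your reduction of \eqref{equalmeasure} to $F_\mu(\varepsilon^S)=0$ is correct. So is the identity $\msympinf(\la)/P_\la(q,q^2,\dots;q)=(q;q^2)_\infty\prod_i(q;q^2)_{\ceil{m_i(\la)/2}}$, whose coefficients do match Macdonald's Littlewood-type identity.

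\textbf{The vanishing is not proved.} The ``skew, column-weighted Littlewood identity'' is never stated. The weight $\prod_{i\le r}(\varepsilon_ip^{\kappa_i})^{\la'_i}$ is not a specialization of the alphabet, and the odd factors you list are guesses.

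\textbf{The fallback involution fails as described.} The cancellation is not a pairing of partitions. Already for $r=1$, $\mu=0$, once the columns beyond the first are summed out, the signed moment is
\[
(q;q^2)_\infty\sum_{b\ge0}\frac{(-1)^bq^{\binom{b+1}{2}-\kappa_1b}}{(q;q)_b}=(q;q^2)_\infty\,(q^{1-\kappa_1};q)_\infty=0 .
\]
This is an instance of Euler's identity, involving infinitely many terms of different sizes. Concretely, for $\kappa_1=1$ the partition $(2)$ has weight $q(q;q^2)_\infty$, and no partition of even length has this weight as a function of $q$.

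\textbf{The bound is not routine either.} The factor $p^{\sum_i\kappa_i(\la'_i-\mu'_i)}$ is unbounded. You must show that its expectation under the $\mu$-dependent probability measure $q^{n(\mu)}\msympinf(\la)\abs{\Sur_p(\la,\mu)}$ is bounded uniformly in $\mu$. ``Summing geometric-type tails'' does not do this.

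\textbf{What is missing.} The paper's proof rests on a column factorization. With $\nu_i=\la'_i-\mu'_i$,
\[
\msympinf(\la)\abs{\Sur_p(\la,\mu)}=q^{-n(\mu)}K(\infty,\nu_1)\prod_{i\ge1}K\big(\nu_i+m_i(\mu),\nu_{i+1}\big),\qquad K(a,b)=q^{\binom{b+1}{2}}(q;q^2)_{\ceil{(a-b)/2}}\qbin{a}{b}_q,
\]
and $\sum_bK(a,b)=1$. So all columns beyond $r$ sum to $1$, and both claims reduce to the one-column generating function $\sum_bK(a,b)t^b={}_2\phi_0(t,q^{-a};\text{--};q,-q^{a+1})$.

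\textbf{How the bound follows.} At $t=q^{-k}$ the generating function is a combination of $q^{(a+1)l}$, $0\le l\le k$, with $a$-independent coefficients. Iterating gives the closed form $q^{-\sum_i\binom{\mu'_i+\kappa_i}{2}}\prod_i(-1;q)_{\kappa_i}\,\mathcal{F}^\kappa(q^{m_1(\mu)+1},\dots,q^{m_{r-1}(\mu)+1})$. The finite multiple sum $\mathcal{F}^\kappa$ is bounded independently of $\mu$, and $\sum_i\binom{\mu'_i+\kappa_i}{2}\le n(\mu)+n(\kappa')+\abs{\kappa}l(\mu)$.

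\textbf{How the vanishing follows.} At $t=-q^{-k}$ with $k\ge1$, the generating function is $(-1)^a$ times a combination of $q^{(j+1)a}$, $0\le j\le k-1$, and it tends to $0$ as $a\to\infty$. For $r=1$ this is exactly the odd factor you were hoping for, since $\sum_bK(\infty,b)t^b=(q;q^2)_\infty(-qt;q)_\infty$.

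For $r\ge2$ the vanishing is proved by induction on $r$. Summing over the last column turns $(\pm1)^{\nu_r}q^{-\kappa_r\nu_r}$ into terms $(-1)^{\nu_{r-1}}q^{(j+1)\nu_{r-1}}$ with $j<\kappa_r$ if $r\in S$, or $q^{j\nu_{r-1}}$ with $j\le\kappa_r$ if $r\notin S$. This replaces $\kappa_{r-1}$ by $\kappa_{r-1}-1-j$ or $\kappa_{r-1}-j$, and replaces $S$ by an odd subset of $[r-1]$.

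Strictness is exactly what keeps the new exponent $\ge\kappa_{r-1}-\kappa_r\ge1$. The induction therefore ends in the base case $(q^{1-k};q)_\infty=0$ with $k\ge1$. For $\kappa=(1,1)$, $S=\{1\}$, the exponent can drop to $0$ and the signed moment is nonzero.
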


The second part of the theorem implies that by fixing a strict partition of length $r$, we obtain $2^{r-1}+1$ measures all of which have the same moments.
We can take affine combinations of these measures to get a $2^{r-1}$-parameter family of measures that all have the same moments, where care must be taken to ensure that no partition is chosen with negative probability.

Let $\rodd$ be the set of all subsets of $[r]$ of odd size and $\ba:\rodd\to\mathbb{R}$.
Then $\A_r\subseteq\R^{\rodd}$ is defined as the polytope determined by the $2^r$ linear inequalities
\[
\sum_{S\in\rodd}
(-1)^{\abs{S\cap T}} \ba(S) \leq 1
\]
for each $T\subseteq [r]$.
The ambient dimension of $\A_r$ is $\abs{\rodd}=2^{r-1}$. 
Moreover, $\A_r$ is full-dimensional in $\R^{2^{r-1}}$ since
\[
\Big\{\ba\in\R^{\rodd}: \sum \abs{\ba(S)}\leq 1\Big\}\subseteq \A_r.
\]
We will show in Lemma~\ref{lem:Am-is-cube} that $\A_r$ is a $2^{r-1}$-dimensional cube.
Given $\ba\in\A_r$, we define the measure
\begin{equation}\label{mkAtilda}
\mkAtilda=\sum_{S\in \rodd} \ba(S)\mkStilda
+\bigg(1-\sum_{S\in \rodd} \ba(S)\bigg) \mktilda.
\end{equation}
Note that if $\ba(S)=0$ for all $S\in\rodd$, then $\mkAtilda$ simplifies to $\mktilda$.
The linear inequalities that define $\A_r$ are chosen so that $\mkAtilda$ does not choose any partition with a negative probability.

\begin{thm}\label{Thm: Same moments alpha family}
Let $p>1$ be a real number and $\mu,\kappa$ partitions such that $\kappa$ is strict and $l(\kappa)=r$.
Then 
$\mkAtilda$ is a measure for all $\ba\in\A_r$ \textup{(}i.e.,
$\mkAtilda(\la)\geq 0$ for all partitions $\la$\textup{)} such that
\begin{equation}\label{noalphadependence}
M_{p,\mu}\big(\mkAtilda\big)=M_{p,\mu}(\mktilda).
\end{equation}
Furthermore, if $\mkAtilda=\mkBtilda$ for $\ba,\bb\in \A_r$, then $\ba=\bb$.
\end{thm}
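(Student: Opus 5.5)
Theorem~\ref{Thm: Same moments alpha family} has three parts: the moment identity, nonnegativity, and injectivity. The moment identity is a formal consequence of Theorem~\ref{Thm: Same moments S odd size}. By \eqref{mkAtilda}, $\mkAtilda$ is an affine combination of $\mktilda$ and the $\mkStilda$ with $S\in\rodd$, and the coefficients sum to $1$. The moment $M_{p,\mu}$ is linear in the measure. Each sum $\sum_\la \mktilda(\la)\abs{\Sur_p(\la,\mu)}$ converges absolutely, by the first bound of Theorem~\ref{Thm: Same moments S odd size} together with nonnegativity. Since $\abs{\mkStilda}\le 2\mktilda$, the same holds for $\mkStilda$. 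Hence
\[
M_{p,\mu}(\mkAtilda)=\sum_{S}\ba(S)M_{p,\mu}(\mkStilda)+\Big(1-\sum_S\ba(S)\Big)M_{p,\mu}(\mktilda),
\]
and \eqref{equalmeasure} collapses this to $M_{p,\mu}(\mktilda)$.

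For nonnegativity, expand pointwise. Put $T_\la:=\{i\in[r]:\la'_i \text{ odd}\}$. Then $(-1)^{\sum_{i\in S}\la'_i}=(-1)^{\abs{S\cap T_\la}}$, so
\[
\mkAtilda(\la)=\mktilda(\la)\Big(1+\sum_{S\in\rodd}\ba(S)(-1)^{\abs{S\cap T_\la}}\Big).
\]
We have $\mktilda(\la)\ge0$ because $\msympinf(\la)\ge0$ for $p>1$. The bracket is nonnegative exactly when $\sum_S(-1)^{\abs{S\cap T_\la}}\ba(S)\ge -1$. The defining inequalities of $\A_r$ are stated as upper bounds $\le 1$, so I would convert them using complements. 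For $\abs S$ odd, $\abs{S\cap([r]\setminus T)}=\abs S-\abs{S\cap T}$, so $(-1)^{\abs{S\cap T^c}}=-(-1)^{\abs{S\cap T}}$. The inequality for $T^c$ therefore gives $-\sum_S(-1)^{\abs{S\cap T}}\ba(S)\le1$, which is the lower bound we need for $T=T_\la$. This holds for every $T$, so $\mkAtilda(\la)\ge0$ for every $\la$.

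For injectivity, suppose $\mkAtilda=\mkBtilda$, and set $\gamma=\ba-\bb$. Dividing by $\mktilda(\la)$, which is strictly positive since $\msympinf(\la)>0$, gives $\sum_S\gamma(S)(-1)^{\abs{S\cap T_\la}}=0$ for every $\la$. The main step is to show that every $T\subseteq[r]$ occurs as $T_\la$. This needs only parity control of $\la'_1,\dots,\la'_r$, and $\la'$ can be any weakly decreasing sequence. So I would take $\la'_i=2(r-i)+\epsilon_i$ with $\epsilon_i=1$ if $i\in T$ and $\epsilon_i=0$ otherwise, and set $\la'_i=0$ for $i>r$. Up to a shift this sequence is decreasing, and it has the prescribed parities. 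Then $\sum_S\gamma(S)\chi_S(T)=0$ for all $T$, where $\chi_S(T)=(-1)^{\abs{S\cap T}}$ are characters of $(\Z/2)^r$. These characters are linearly independent, so $\gamma=0$ and $\ba=\bb$. I do not expect a serious obstacle. The only points needing care are the complement trick for the sign and justifying linearity of the moments through absolute convergence.
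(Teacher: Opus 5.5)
Your proof is correct and follows essentially the same route as the paper: you rewrite $\mkAtilda(\la)$ pointwise and use the complement (even-parity set) trick for nonnegativity, you use linearity of moments together with Theorem~\ref{Thm: Same moments S} for the moment identity, and for injectivity you divide by $\mktilda(\la)>0$ and then use linear independence of the characters $(-1)^{\abs{S\cap T}}$. The paper packages that last step as Corollary~\ref{Cor: H is injectiev}. Your explicit choice $\la'_i=2(r-i)+\epsilon_i$ makes explicit a point the paper uses silently, namely that every parity pattern $T\subseteq[r]$ actually occurs. That sequence is already weakly decreasing, so no shift is needed.
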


In particular, if $\kappa=(r,r-1,\dots,1)$ is the staircase partition of length $r$, then we have a $2^{r-1}$-parameter family of measures parametrized by $\ba\in \A_r$.
For each $\ba$, we have
\[
M_{p,\mu}(\mkAtilda)=M_{p,\mu}(\mktilda)\leq C p^{n(\mu)+\binom{r+1}{2} l(\mu)}.
\]

We highlight the $r=1$ case.
Note that $\A_1=[-1,1]\subseteq \R^1$.
Given $\alpha\in[-1,1]$ and $k$ a positive integer, we define the distributions
\[
\spmk(\la):=\frac{q^{\binom{k}{2}}}{(-1;q)_k}\,\tilde{P}^{\Sym,(k)}_p(\la)
\quad\text{and}\quad
\qpmk:= \frac{q^{\binom{k}{2}}}{(-1;q)_k}\,\tilde{P}^{\Sym,(k)}_{p,\alpha}(\la).
\]
We note that the distribution appearing in the special case of Conjecture~\ref{p-Syl_conj} for $p=2$ is $P^{\Sym,(1)}_2(\la)$. 
The superscript M in $\qpmk$ stands for `M\'esz\'aros'.
This is to highlight the fact that the special case of Theorem~\ref{mes_thm} corresponding to even $d$ and $p=2$ is the $p=2$, $k=1$, $\alpha=-1$ instance of the family $\qpmk$.

\begin{thm}\label{Thm_r=1_one-par-family}
For $\alpha\in[-1,1]$ and $k$ a positive integer,
\[
M_{p,\mu}\big(\qpmk\big)=M_{p,\mu}\big(\spmk\big)=p^{n(\mu)+kl(\mu)}.
\]
\end{thm}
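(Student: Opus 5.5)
We first reduce to the unsigned measure. Since $\A_1=[-1,1]$, Theorem~\ref{Thm: Same moments alpha family} with $\kappa=(k)$ and $r=1$ gives $M_{p,\mu}(\tilde P^{\Sym,(k)}_{p,\alpha})=M_{p,\mu}(\tilde P^{\Sym,(k)}_p)$ for every $\alpha\in[-1,1]$. The same normalizing constant $q^{\binom k2}/(-1;q)_k$ multiplies both measures, so the first equality follows. It then suffices to show
\[
M_{p,\mu}\big(\tilde P^{\Sym,(k)}_p\big)=\sum_\la p^{k\la'_1}\msympinf(\la)\abs{\Sur_p(\la,\mu)}=(-1;p)_k\,p^{n(\mu)+kl(\mu)}.
\]
This is enough because $(-1;p)_k=\prod_{i=0}^{k-1}(1+p^i)=p^{\binom k2}(-1;q)_k$.

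The key step is a decomposition of the twisted surjection count,
\begin{equation*}
p^{k\,l(\la)}\abs{\Sur_p(\la,\mu)}=\sum_{j=0}^{k}\qbin{k}{j}_p\,p^{(k-j)l(\mu)}\,\abs{\Sur_p\big(\la,\mu\cup(1^j)\big)}, \tag{$\ast$}
\end{equation*}
where $\mu\cup(1^j)$ denotes $\mu$ with $j$ parts equal to $1$ added. For $p$ prime we prove $(\ast)$ as follows. Note that $p^{k l(\la)}=\abs{\Hom(G_\la,(\Z/p\Z)^k)}$. A pair $(\phi,\psi)$ with $\phi\colon G_\la\to\mathbb{F}_p^k$ and $\psi\in\Sur(G_\la,G_\mu)$ is the same as a surjection of $G_\la$ onto a subgroup $H\subseteq \mathbb{F}_p^k\times G_\mu$ whose projection to $G_\mu$ is onto.

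We then classify such $H$ by $K=H\cap(\mathbb{F}_p^k\times 0)$, of dimension $j$ say. Then $H/K$ is the graph of a homomorphism $G_\mu\to\mathbb{F}_p^k/K$. There are $\qbin kj_p$ choices of $K$ and $p^{(k-j)l(\mu)}$ choices of the homomorphism. Any homomorphism $G_\mu\to \mathbb{F}_p^k/K$ lifts to a homomorphism $s\colon G_\mu\to\mathbb{F}_p^k$, since $G_\mu\to G_\mu/pG_\mu$ factors it and $\mathbb{F}_p^k/K$ splits. Hence $H=K\oplus\{(s(g),g)\}\cong \mathbb{F}_p^j\oplus G_\mu$, which has type $\mu\cup(1^j)$. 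This proves $(\ast)$ for primes.

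To pass to real $p>1$, we use Proposition~\ref{surjhall}. For fixed $\la$ and $\mu$, both sides of $(\ast)$ are rational functions of $q$: the Hall--Littlewood specializations are power series, but by the finite group count they are rational in $p$ for fixed shapes. Two rational functions that agree at all primes agree identically, so $(\ast)$ holds for all $p>1$. I expect this extension from primes to real $p$ to be the main delicate point. If it causes trouble, the fallback is to prove $(\ast)$ directly from the branching and Pieri rules for Hall--Littlewood functions, adding a vertical strip $1^j$.

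Finally, we sum $(\ast)$ against $\msympinf(\la)$. All terms are nonnegative, so the sums may be interchanged. Theorem~\ref{Thm: moment P sym inf}, valid for real $p>1$, gives
\[
M_{p,\mu}\big(\tilde P^{\Sym,(k)}_p\big)=\sum_{j}\qbin kj_p p^{(k-j)l(\mu)}p^{n(\mu\cup 1^j)}.
\]
Next we use $n(\mu\cup1^j)=n(\mu)+j\,l(\mu)+\binom j2$. The summand then becomes $p^{n(\mu)+kl(\mu)}\qbin kj_p p^{\binom j2}$. The $q$-binomial theorem gives $\sum_j\qbin kj_p p^{\binom j2}=(-1;p)_k$, which completes the proof.
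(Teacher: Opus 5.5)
Your proof is correct, and for the unsigned moment it takes a genuinely different route from the paper.

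The paper obtains $M_{p,\mu}(\tilde P^{\Sym,(k)}_p)$ as the $r=1$ case of Theorem~\ref{Thm: moments of sym k1 dots km not normalized}:
\begin{itemize}
\item It factors $\msympinf(\la)\abs{\Sur_p(\la,\mu)}$ into the Markov transition weights $K(a,b)$ (Lemma~\ref{Lem: PSym inf times sur product}).
\item It sums out all but one step of the chain with Lemma~\ref{Kabsum}.
\item It evaluates the remaining twisted sum $\sum_b K(\infty,b)q^{-kb}=q^{-\binom k2}(-1;q)_k$. This rests on the Rogers--Szeg\H{o} evaluation and $q$-hypergeometric transformations.
\end{itemize}
For $\kappa=(k)$ one has $\mathcal{F}^{(k)}=1$ and $\sum_i\binom{\mu'_i+\kappa_i}{2}=n(\mu)+kl(\mu)+\binom k2$.

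You instead linearize the weight $p^{kl(\la)}=\abs{\Hom(G_\la,\mathbb{F}_p^k)}$ by classifying the subgroups of $\mathbb{F}_p^k\times G_\mu$ that surject onto $G_\mu$. This turns the twisted moment into a positive combination of untwisted moments of $\msympinf$, and you finish with the $q$-binomial theorem.

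What your route buys:
\begin{itemize}
\item It is shorter and avoids all basic hypergeometric machinery.
\item It explains where $p^{n(\mu)+kl(\mu)}$ comes from, through $n(\mu\cup 1^j)=n(\mu)+jl(\mu)+\binom j2$.
\end{itemize}
What it costs:
\begin{itemize}
\item It takes Theorem~\ref{Thm: moment P sym inf} for real $p$ as an input, and it needs a transfer from primes to real $p$.
\item It does not generalize easily. Although $p^{\sum_i\kappa_i\la'_i}=\abs{\Hom(G_\la,G_{\kappa'})}$, for non-elementary $G_{\kappa'}$ the subgroups $H$ need not split as $K\oplus G_\mu$, so one is led into Hall-polynomial counts.
\end{itemize}
The paper's Markov-chain method is self-contained: with no twist it reproduces the moments $p^{n(\mu)}$ of $\msympinf$. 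It handles every $\kappa$ at once. It is also the same machinery that proves the vanishing of the signed moments you import through Theorem~\ref{Thm: Same moments alpha family}.

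One justification needs repair. The fact that the group count is a polynomial in $p$ at primes does not by itself show that the Hall--Littlewood expression \eqref{Surp-size} is rational in $p$ for real $p>1$. What does show it is \eqref{Sur2}. The paper derives \eqref{Sur2} from \eqref{Surp-size} using \eqref{Eqn: Q la minus mu}, valid for all real $p>1$, and it exhibits $\abs{\Sur_p(\la,\mu)}$ as an explicit rational function of $q$. With that in hand, your argument that both sides of $(\ast)$ agree at infinitely many primes and hence identically is sound, and the Pieri-rule fallback is unnecessary.
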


Note that if we take $\mu$ equal to the empty partition, $\mu=0$, then $\sum_{\la} \qpmk(\la)=1$ and $\sum_{\la} \spmk(\la)=1$, meaning that these are distributions. 

It follows from \cite[Theorem 4.1]{fulman2019random} that if $\la$ is chosen according to $\spmk$, then the distribution of $\la'_1 = l(\la)$ is
\[
\P_{\spmk}\big(\la_1'=b\big)
=\frac{1}{(-1;q)_{k}(-q;q)_{\infty}}\,\frac{q^{\binom{b-k+1}{2}}}{(q;q)_b}.
\]
When $k=1$, this same distribution was conjectured as the distribution of ranks of $p$-Selmer groups of elliptic curves by Poonen and Rains in \cite{poonen2012random}.
Moreover, in \cite[Proposition~2.22]{poonen2012random} they showed that for each $\alpha\in [-1,1]$ and $n$ a nonnegative integer,
\[
\E_{P^{\textrm{M},(1)}_{p,\alpha}}\big(p^{n\la_1'}\big)= q^{-\binom{n+1}{2}} (q;q)_n,
\]
which is equivalent to 
\[
M_{p,(1^n)}\big(P^{\textrm{M},(1)}_{p,\alpha}\big)=q^{-\binom{n+1}{2}}.
\]
This is consistent with Theorem~\ref{Thm_r=1_one-par-family}.
Furthermore, in \cite[Corollary~2.12]{wood2022probability} Wood showed that if $\nu$ is a measure such that for each $n\geq 0$, $M_{p,(1^n)}(\nu)=q^{-\binom{n+1}{2}}$, then there exists an $\alpha\in [-1,1]$ such that $\nu$ and $P^{M,(1)}_{p,\alpha}$ have the same distribution of $p$-ranks.
This means that for each $b\geq 0$, 
\[
\P_{\nu}\big(\la_1'=b\big)=\P_{P^{\textrm{M},(1)}_{p,\alpha}}\big(\la_1'=b\big).
\]

Next, we highlight the case $r=2$.
The inequalities defining $\A_2$ give the square
\[
\A_2
=\big\{(\alpha_1,\alpha_2)\in\mathbb{R}^2:\; -1\leq\alpha_1+\alpha_2\leq 1 \text{ and } -1\leq\alpha_1-\alpha_2\leq 1\big\}.
\]
Given a strict partition $\kappa=(\kappa_1,\kappa_2)$ and $(\alpha_1,\alpha_2)\in\A_2$, we have
\begin{align*}
\Tilde{P}^{\Sym,\kappa}_{p,(\alpha_1,\alpha_2)}
&= \alpha_1 \Tilde{P}^{\Sym,\kappa}_{p,\{1\}}
+ \alpha_2 \Tilde{P}^{\Sym,\kappa}_{p,\{2\}}
+(1-\alpha_1-\alpha_2) \Tilde{P}^{\Sym,\kappa}_{p}\\
&=
\Big(1+(-1)^{\la'_1}\alpha_1+(-1)^{\la'_2}\alpha_2\Big) p^{\kappa_1\la_1'+\kappa_2\la_2'} \msympinf(\la).
\end{align*}
Note that the inequalities that define $\A_2$ are precisely the ones needed to ensure that $\Tilde{P}^{\Sym,\kappa}_{p,(\alpha_1,\alpha_2)}$ never takes negative values.

\subsection{Moment computations}\label{sec:compute_moments_intro}

Suppose $\nu$ is a distribution on partitions given in terms of a real number $p>1$ for which $\nu$ is the unique distribution with the collection of moments $M_{p,\mu}(\nu)$. Sawin and Wood described how to go from this collection of moments to a formula for the probability $\nu(\la)$, see~\cite[Theorem 1.6]{SawinWood}.  For another perspective on this question that uses the theory of Macdonald polynomials, see the paper of Van Peski \cite{vanpeski2025}.  
We take the opposite approach here.
If we are given the distribution $\nu(\la)$ for each $\la$, how do we determine the moments?

A useful approach to address this question is to first interpret the distribution in terms of Hall--Littlewood symmetric functions, and to then use properties of these functions to compute moments of the distribution.
Applying this method, we first study the moments of the distribution $\mkAtilda$, proving part of Theorem~\ref{Thm: Same moments alpha family}.
Then, when $r=l(\kappa)=1$, we use these same ideas to prove Theorem~\ref{Thm_r=1_one-par-family}. 
Finally, we apply our methodology to compute the moments of several measures given in terms of a real parameter $p>1$, not necessarily prime.

\subsubsection{Moments of Cohen--Lenstra partitions}

Let $d$ be a nonnegative integer and $u$ a real number such that $0<u<p$.
We consider the distribution $P_{d,u}$ on partitions given by \cite[Equation~(1.3)]{fulman2019random},
\[
P_{d,u}(\la)=\frac{u^{\abs{\la}}}{\abs{\Aut_p(\la)}}\,
(q;q)_{l(\la)}(uq;q)_d \qbin{d}{l(\la)}_q.
\]
This is nonzero only when $l(\la)\leq d$, and simplifies to the distribution of \cite{friedman1989distribution} for $u=1$.
From \cite[p.~181]{macdonald1995symmetric},
\[
\abs{\Aut_p(\la)}=q^{-2n(\la)-\abs{\la}}\, b_{\la}(q),
\]
where $b_{\la}(q)=\prod_{i\geq 1}(q;q)_{m_i(\la)}$.
Therefore,
\[
P_{d,u}(\la)=\frac{u^{\abs{\la}}q^{2n(\la)+\abs{\la}}}{b_{\la}(q)}\,
(q;q)_{l(\la)}(uq;q)_d \qbin{d}{l(\la)}_q.
\]
Since \cite[page 213]{macdonald1995symmetric}
\begin{equation}\label{Eq_p213}
P_{\la}(1,q,\dots,q^{d-1};q)
=\frac{q^{n(\la)}(q;q)_{l(\la)}}{b_{\la}(q)}
\qbin{d}{l(\la)}_{q},
\end{equation}
where $P_{\la}$ denotes a Hall--Littlewood polynomial defined in Section~\ref{Subsec: defn of sym polynomial}, we alternatively have
\begin{equation}\label{nice}
P_{d,u}(\la)=u^{\abs{\la}} q^{n(\la)+\abs{\la}} (uq;q)_d\,
P_{\la}(1,q,\dots,q^{d-1};q),
\end{equation} 
which we consider for arbitrary real $p>1$ and $0<u<p$.
We will give a short, new proof of a closed-form formula for the $\mu$-moment of $P_{d,u}$ first obtained in \cite[Theorem~5.3]{fulman2019random}.
Like the proof of \cite{fulman2019random}, our approach relies on the
theory of Hall--Littlewood symmetric functions.

\begin{thm}[{\!\!\cite[Theorem~5.3]{fulman2019random}}]\label{thm53Fulman}
Let $p,u$ be real numbers such that $p>1$ and $0<u<p$, and let $\mu$ be a partition.
Then the $\mu$-moment of $P_{d,u}$ is given by
\[
M_{p,\mu}(P_{d,u})=u^{\abs{\mu}} (q;q)_{l(\mu)} \qbin{d}{l(\mu)}_q,
\]
which is $0$ for $l(\mu)>d$.
\end{thm}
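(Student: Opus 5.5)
We substitute the Hall--Littlewood expressions \eqref{nice} and \eqref{Surp-size} into the definition \eqref{mumoment} of the moment. The $\mu$-moment then becomes
\[
M_{p,\mu}(P_{d,u})=\frac{(uq;q)_d}{Q_\mu(1,q,q^2,\dots;q)}\sum_{\la} u^{\abs{\la}}q^{n(\la)+\abs{\la}}\,
\frac{P_{\la}(1,q,\dots,q^{d-1};q)\,P_{\la/\mu}(q,q^2,\dots;q)}{P_\la(q,q^2,\dots;q)}.
\]
The principal specialization $P_\la(q,q^2,\dots;q)$ equals $q^{\abs{\la}}P_\la(1,q,q^2,\dots;q)$ by homogeneity. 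Moreover $P_\la(1,q,q^2,\dots;q)=q^{n(\la)}/b_\la(q)$, which is the $d\to\infty$ case of \eqref{Eq_p213}. Hence the factor $q^{n(\la)+\abs{\la}}/P_\la(q,q^2,\dots;q)$ collapses to $b_\la(q)$, and the sum becomes
\[
\sum_\la u^{\abs{\la}}\, b_\la(q)\,P_\la(1,\dots,q^{d-1};q)\,P_{\la/\mu}(q,q^2,\dots;q)
=\sum_\la u^{\abs{\la}}\,Q_\la(1,\dots,q^{d-1};q)\,P_{\la/\mu}(q,q^2,\dots;q),
\]
using $Q_\la=b_\la P_\la$.

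Next we apply the skew Cauchy identity for Hall--Littlewood functions:
\[
\sum_\la P_{\la/\mu}(x;t)Q_\la(y;t)=Q_\mu(y;t)\prod_{i,j}\frac{1-tx_iy_j}{1-x_iy_j}.
\]
We take $t=q$, $x=(uq,uq^2,\dots)$, which absorbs $u^{\abs{\la}}$ via homogeneity of the skew function in degree $\abs{\la}-\abs{\mu}$, and $y=(1,q,\dots,q^{d-1})$. Bookkeeping the powers of $u$ gives the sum as $u^{\abs{\mu}}\cdot u^{-\abs{\mu}}\sum_\la P_{\la/\mu}(ux;q)Q_\la(y;q)$ with $x=(q,q^2,\dots)$. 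The skew Cauchy identity then yields
\[
u^{\abs{\mu}}\,Q_\mu(1,\dots,q^{d-1};q)\prod_{j=0}^{d-1}\prod_{i\ge1}\frac{1-uq^{i+j+1}}{1-uq^{i+j}}.
\]
For fixed $j$ the inner product telescopes to $1/(1-uq^{j+1})$, so the double product equals $1/(uq;q)_d$ and cancels the prefactor $(uq;q)_d$. Convergence is ensured by $0<uq<1$, which is where the hypothesis $u<p$ enters.

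This leaves $u^{\abs{\mu}}\,Q_\mu(1,\dots,q^{d-1};q)/Q_\mu(1,q,q^2,\dots;q)$. Using $Q_\mu=b_\mu P_\mu$ together with \eqref{Eq_p213} for both finite $d$ and $d=\infty$, the ratio becomes $(q;q)_{l(\mu)}\qbin{d}{l(\mu)}_q$ divided by $(q;q)_{l(\mu)}\lim_{d\to\infty}\qbin{d}{l(\mu)}_q=(q;q)_{l(\mu)}/(q;q)_{l(\mu)}=1$. Putting these together gives $u^{\abs{\mu}}(q;q)_{l(\mu)}\qbin{d}{l(\mu)}_q$, and this vanishes for $l(\mu)>d$ because the $q$-binomial does.

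The main obstacle is justifying the infinite-variable skew Cauchy identity with the principal specializations and the exchange of summation. All terms are nonnegative for $0<q<1$ and $0<u<p$, so I would argue via absolute convergence: truncate $x$ to finitely many variables, apply the polynomial identity, and let the number of variables tend to infinity by monotone convergence. The remaining care is in tracking the powers of $u$ and $q$ correctly.
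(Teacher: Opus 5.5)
Your proposal is correct and follows essentially the same route as the paper. You rewrite the summand as $u^{\abs{\mu}}q^{-n(\mu)}(uq;q)_d\,Q_\la(1,\dots,q^{d-1};q)\,P_{\la/\mu}(uq,uq^2,\dots;q)$, apply the skew Cauchy identity (whose product telescopes to $1/(uq;q)_d$), and finish with \eqref{Eq_p213}. The paper does the same, except that it uses $Q_\mu(1,q,q^2,\dots;q)=q^{n(\mu)}$ at the start rather than at the end; your monotone-convergence justification is an addition the paper omits. One cosmetic slip: drop the stray factor $u^{-\abs{\mu}}$ in your bookkeeping line, since $u^{\abs{\la}}P_{\la/\mu}(x;q)=u^{\abs{\mu}}P_{\la/\mu}(ux;q)$; your next display already carries the correct factor $u^{\abs{\mu}}$.
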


Equations~\eqref{nice} and \eqref{Eqn: Q la 1 q q2} imply that
\[
P_{\infty,u}(\la):=\lim_{d\to\infty} P_{d,u}(\la)
=(uq;q)_{\infty}\,\frac{u^{\abs{\la}}q^{2n(\la)+\abs{\la}}}{b_{\la}(q)}.
\]
The moments of these distributions are obtained from Theorem~\ref{thm53Fulman} by taking the limit as $d\to\infty$,
\[
M_{p,\mu}(P_{\infty,u})=u^{\abs{\mu}}.
\]
This was previously shown in the remark following Theorem~5.3 of \cite{fulman2019random}.

\subsubsection{Moments of distributions motivated by class groups of number fields in the presence of roots of unity}

Let $K$ be a number field and $p$ a prime.
Suppose that $K$ contains $p^n$-roots of unity but not $p^{n+1}$-roots of unity.
Denote the rank of the unit group of $\mathcal{O}_K$ by $t+1$.
Lipnowski, Sawin, and Tsimerman described a conjectural distribution for the relative class group $\mathrm{Cl}(L/K)$ as $L$ varies among the quadratic extensions of $K$ \cite[Conjecture 1.2]{lipnowski2020cohen}. 
We focus on two special cases of their conjecture.
In \cite{lipnowski2020cohen} two-parameter families of distributions are considered, indexed by nonnegative integers $n$ and $t$. We will consider generalizations of the cases $n=1$ and $t=0$. 

When $n=1$, the conjectured distribution was first described by Malle in~\cite{malle2010distribution} and is given by
\[
\mMalle(\la):=(-q;q)_t (q;q^2)_{\infty}\,
\frac{q^{(t+1)\abs{\la}+2n(\la)-\binom{l(\la)}{2}}
(q^{t+1};q)_{l(\la)}}{b_{\la}(q)}.
\]
This can be generalized to $d\in \Z_{\geq 1}$, $p\in(1,\infty)$ and $t\in(-1,\infty)$ by
\[
\mMalled(\lambda):=P_{d,q^t}(\la)\, 
\frac{q^{-\binom{l(\la)}{2}}(q^{t+1};q)_{l(\la)}}
{(q^{2t+2};q^2)_d}.
\]
Taking the limit $d\to\infty$, we obtain
\begin{equation}\label{Malle}
\mMalle(\la)
:= \lim_{d\to\infty} \mMalled(\la)
=P_{\infty, q^t}(\la)\,
\frac{q^{-\binom{l(\la)}{2}}(q^{t+1};q)_{l(\la)}}
{(q^{2t+2};q^2)_{\infty}}.
\end{equation}
Note that this matches the previous definition if $t$ is a nonnegative integer.
When $p$ is a prime and $t$ a nonnegative integer, the moments were computed in \cite[Section 8.3]{lipnowski2020cohen}.
Here we compute the moments by a different method that works for general $p$, $t$ and also for finite $d$.

\begin{thm}\label{thm: moments of Malle}
Let $p,t$ be real numbers such that $p>1$ and $t>-1$, let $d$ be a positive integer and let $\mu$ be a partition.
Then the $\mu$-moment of the distribution $\mMalled$ is 
\[
M_{p,\mu}\big(\mMalled\big)
=\frac{q^{t\abs{\mu}-\binom{l(\mu)}{2}} (q;q)_{l(\mu)}}{ (-q^{d+t+1-l(\mu)};q)_{l(\mu)} }
\qbin{d}{l(\mu)}_q.
\]
Furthermore, the $\mu$-moment of the distribution $\mMalle$ is 
\[
M_{p,\mu}\big(\mMalle\big)=p^{\binom{l(\mu)}{2}-t\abs{\mu}}.
\]
\end{thm}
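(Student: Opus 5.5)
We will compute the moment directly from the definition \eqref{mumoment}, combining Proposition~\ref{surjhall} with the Hall--Littlewood expression \eqref{nice} for $P_{d,u}$ at $u=q^t$. Since $P_{\la}(q,q^2,\dots;q)=q^{\abs{\la}}P_{\la}(1,q,q^2,\dots;q)$, and since by \eqref{Eqn: Q la 1 q q2} we have $P_{\la}(1,q,\dots;q)=q^{n(\la)}/b_\la(q)$, the factor $q^{n(\la)+\abs{\la}}$ in \eqref{nice} cancels against the denominator $P_\la(q,q^2,\dots;q)$. This gives
\[
M_{p,\mu}\big(\mMalled\big)=\frac{(q^{t+1};q)_d}{(q^{2t+2};q^2)_d\,Q_\mu(1,q,\dots;q)}\sum_{\la} q^{t\abs{\la}}\, q^{-\binom{l(\la)}{2}}(q^{t+1};q)_{l(\la)}\, P_{\la/\mu}(q,q^2,\dots;q)\,P_\la(1,q,\dots,q^{d-1};q).
\]
The factor depending on $l(\la)$ is the only obstruction to applying the skew Cauchy identity directly. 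I would absorb it by a principal specialization trick. The key tool is the identity (Macdonald, Ch.~III, or the $q$-binomial theorem in Hall--Littlewood form)
\[
\sum_{\la} P_\la(x;q)\,b_\la(q)\,\prod_{i\ge1} \ldots
\]
More concretely, I would use the known evaluation $Q_\la$ at the specialization $(1,q,\dots,q^{d-1})$ together with the fact that $q^{-\binom{l}{2}}(q^{t+1};q)_l$ is, up to the factor $(q;q)_l/b_\la$, the value of $P_\la$ under a ``plethystic'' specialization $\frac{1-a}{1-q}$-type alphabet with $a=q^{t+1}$ (Macdonald III.2 Example~1: $Q_\la\big[\tfrac{1-a}{1-q}\big]=\prod_{\text{parts}}\ldots$, which gives $a$-dependence through $(a;q)_{l(\la)}$ times $q^{n(\la)}$ factors).

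With that, the $l(\la)$-weight is converted into a Hall--Littlewood function in a second alphabet. The sum then becomes a triple product $\sum_\la P_{\la/\mu}(X)P_\la(Y)\cdot(\text{weight})$, and I would evaluate it using the skew Cauchy identity $\sum_\la P_{\la/\mu}(x)Q_\la(y)=\prod\frac{1-txy}{1-xy}\,\sum_\nu Q_{\mu/\nu}(y)P_\nu(x)$. Because $Y$ is a finite geometric string $(1,\dots,q^{d-1})$, the Cauchy kernel telescopes into finite $q$-shifted factorials. The main obstacle is the $l(\la)$-dependent factor, since it is not multiplicative in the variables. If the plethystic route fails, my fallback is as follows. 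Restricting to $\la$ with fixed $\la\supseteq\mu$, write $\la$ via its conjugate and use the branching formula for $P_{\la/\mu}$ along columns (the explicit formula $P_{\la/\mu}(x)$ in terms of $\prod\qbin{\la'_i-\mu'_{i+1}}{\la'_i-\mu'_i}$ for one variable). I would then sum column by column. The sum over $l(\la)=\la'_1$ should then reduce to a $q$-Chu--Vandermonde/$q$-Pfaff--Saalschütz summation producing the factor $1/(-q^{d+t+1-l(\mu)};q)_{l(\mu)}$. The remaining columns would contribute, via Theorem~\ref{thm53Fulman}, the factor $q^{t\abs{\mu}}(q;q)_{l(\mu)}\qbin{d}{l(\mu)}_q$. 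The extra $q^{-\binom{l(\mu)}{2}}$ comes out of the summation's normalization together with $(q^{2t+2};q^2)_d=(q^{t+1};q)_d(-q^{t+1};q)_d$.

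Finally, the infinite case follows by letting $d\to\infty$ with dominated convergence, justified since the finite-$d$ measures increase to the limit and the moments are bounded. As $d\to\infty$, $\qbin{d}{l}_q(q;q)_l\to1$ and $(-q^{d+t+1-l};q)_l\to1$. This gives $q^{t\abs{\mu}-\binom{l(\mu)}{2}}=p^{\binom{l(\mu)}{2}-t\abs{\mu}}$.
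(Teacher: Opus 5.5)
Your primary route does not work as described. First, a slip: dividing by $P_\la(q,q^2,\dots;q)=q^{\abs{\la}+n(\la)}/b_\la(q)$ leaves a factor $b_\la(q)$, so the summand contains $Q_\la(1,q,\dots,q^{d-1};q)$, not $P_\la$.

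More importantly, the weight $w(l)=q^{-\binom{l}{2}}(q^{t+1};q)_l$ cannot be turned into something the skew Cauchy identity \eqref{Eqn: Skew Cauchy identity} can handle. Macdonald's specialization $\epsilon$ with $p_r\mapsto(1-u^r)/(1-q^r)$ gives $\epsilon(Q_\la)=q^{n(\la)}(uq^{1-l(\la)};q)_{l(\la)}$. With $u=q^{-t-1}$ this yields $w(l(\la))=(-q^{t+1})^{l(\la)}q^{-n(\la)}\epsilon(Q_\la)$. Neither extra factor can be absorbed by homogeneity, which only produces powers $c^{\abs{\la}}$. Even if they could, you would have three $\la$-indexed Hall--Littlewood factors, and the Cauchy identity handles two.

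Nor can the weight be merged into the specialization already present. Any specialization with $\epsilon(Q_\la)=q^{n(\la)}F(l(\la))$ must satisfy $F(1)^2-qF(2)=(1-q)F(1)$; compare $\la=(1),(1,1),(2)$. But $F(l)=(q^{d-l+1};q)_l\,w(l)$ gives $F(1)^2-qF(2)=(1-q)(q^{d-1}-q^{t+1})F(1)$ instead.

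Your fallback is the right strategy, and it is the paper's, but the step that makes it work is missing and the order of summation is backwards. What you need is the column factorization \eqref{Pdula}:
$P_{d,u}(\la)\abs{\Sur_p(\la,\mu)}=u^{\abs{\mu}}(q;q)_{l(\mu)}\qbin{d}{l(\mu)}_q\prod_{i\geq1}L_{uq^{\mu'_i}}(\la'_{i-1}-\mu'_i,\la'_i-\mu'_i)$, with $\la'_0:=d$. Its kernels are stochastic by Lemma~\ref{LemLsum1}.

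Since $\la'_1$ enters the kernel of the second column, you must sum out $\la'_N,\dots,\la'_2$ first, each giving $1$. With $u=q^t$ this leaves
$\sum_{l_1}L_{q^{t+l(\mu)}}(d-l(\mu),l_1)\,q^{-\binom{l_1+l(\mu)}{2}}(q^{t+1};q)_{l_1+l(\mu)}$.
This is a $q$-binomial sum, evaluated by \eqref{qbtone} with $(w,z,a)=(q^{l(\mu)+t},q^{-l(\mu)},d-l(\mu))$. Together with $(q^{2t+2};q^2)_d=(q^{t+1},-q^{t+1};q)_d$ it produces $1/(-q^{d+t+1-l(\mu)};q)_{l(\mu)}$.

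Theorem~\ref{thm53Fulman} cannot replace this step. It only gives the total $\sum_\la P_{d,u}(\la)\abs{\Sur_p(\la,\mu)}$, not how that mass is distributed over the values of $\la'_1$. The later columns do not contribute a constant: they contribute the kernel $L_{q^{t+l(\mu)}}(d-l(\mu),\la'_1-l(\mu))$, which is exactly what the weight must be summed against. Also, \eqref{Eqn: Q la minus mu} is the principal specialization, not a one-variable formula.

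Finally, $\mMalled$ does not increase to $\mMalle$. The ratio $\mMalled(\la)/\mMalle(\la)$ is $(q^{d-l+1};q)_l\,(-q^{d+t+1};q)_\infty$ for $l=l(\la)\leq d$, and for $\la=0$ it strictly decreases in $d$. It is, however, bounded by $(-q^{t+1};q)_\infty$, so Fatou's lemma plus dominated convergence justifies the $d\to\infty$ limit.
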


When $t=0$ the distributions described in the first paragraph of this section were studied by Garton, who computed their moments \cite[Corollary 3.2.7]{garton2015random}. 
He also gave a closed form formula of the distribution when $n=1$ and $n=2$ in \cite[Theorem 1.2.4]{garton2015random}. 
Lipnowski, Sawin, and Tsimerman \cite[Proposition 8.23]{lipnowski2020cohen} gave a closed form formula for any positive integer $n$ when $t=0$:
\[
\mGarton(\la):=P_{\infty,1}(\la)\,
\frac{q^{-\sum_{i=1}^n\binom{\la'_i}{2}} (q;q)_{\la'_n} }
{(q^2;q^2)_{\infty}} \prod_{i=1}^{n-1} (q;q^2)_{\ceil{m_i(\la)/2}}.
\]
This can be generalized to $d,n\in \Z_{\geq 1}$ and $p\in(1,\infty)$ by
\[
\mGartond(\la):=P_{d,1}(\la)\,
\frac{q^{-\sum_{i=1}^n\binom{\la'_i}{2}}(q;q)_{\la'_n}}{(q;q)_d}\,
(q;q^2)_{\ceil{(d-l(\la))/2}}
\prod_{i=1}^{n-1} (q;q^2)_{\ceil{m_i(\la)/2}}.
\]

\begin{thm}\label{thm: moments of Garton}
Let $n$ and $d$ be positive integers, $p>1$ a real number and $\mu$ a partition.
Then the $\mu$-moment of the distribution $\mGartond$ is 
\[
M_{p,\mu}\big(\mGartond\big)
= q^{-\sum_{i=1}^n\binom{\mu'_i}{2}}\,
\frac{(q;q)_d}{(q;q)_{d-l(\mu)}},
\]
and thus
\[
M_{p,\mu}\big(\mGarton\big)=p^{\sum_{i=1}^n\binom{\mu'_i}{2}}.
\]
\end{thm}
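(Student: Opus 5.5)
Our plan is to follow the same Hall--Littlewood strategy as for Theorem~\ref{thm53Fulman}. By Proposition~\ref{surjhall} and \eqref{nice} with $u=1$, the moment is
\[
M_{p,\mu}\big(\mGartond\big)=\frac{(q;q)_d\,q^{-n(\mu)}}{(q;q)_d\,b'_\mu}\sum_{\la} w_n(\la)\,(q;q^2)_{\ceil{(d-l(\la))/2}}\,q^{n(\la)+\abs{\la}}P_{\la}(1,\dots,q^{d-1};q)\,\frac{P_{\la/\mu}(q,q^2,\dots;q)}{P_{\la}(q,q^2,\dots;q)},
\]
where $w_n(\la)=q^{-\sum_{i\le n}\binom{\la'_i}{2}}(q;q)_{\la'_n}\prod_{i<n}(q;q^2)_{\ceil{m_i/2}}$ and $b'_\mu$ comes from $Q_\mu(1,q,\dots)$. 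Using the principal specialization $P_\la(q,q^2,\dots)=q^{n(\la)+\abs{\la}}/b_\la(q)$ and \eqref{Eq_p213}, the factor $q^{n(\la)+\abs{\la}}P_\la(1,\dots,q^{d-1})/P_\la(q,q^2,\dots)$ becomes $q^{n(\la)}(q;q)_{l(\la)}\qbin{d}{l(\la)}_q$. Hence the sum is over $\la\supseteq\mu$ of an explicit function of the conjugate columns $\la'_1,\dots,\la'_n$ and the multiplicities, times $P_{\la/\mu}(q,q^2,\dots;q)$.

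Next we write $P_{\la/\mu}$ via the branching formula as a sum over interlacing sequences $\mu=\nu^{(0)}\subseteq\nu^{(1)}\subseteq\cdots$ of horizontal strips with $\psi$-coefficients. Equivalently, we use the explicit formula for $P_{\la/\mu}(q,q^2,\dots)$ as $q^{\abs{\la}-\abs{\mu}+n(\la/\mu)}$-type products over columns, $\prod_i \qbin{\la'_i-\mu'_{i+1}}{\la'_i-\mu'_i}$ over $q$-factorials. Then the sum over $\la$ factorizes column by column. For columns $i>n$ the weight is that of the $\msympinf$-type (symmetric) piece, and for $i\le n$ it is the Cohen--Lenstra-type piece. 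The cleanest route is to induct on $n$: the case $n$ is obtained from $n-1$ by replacing the even/odd-multiplicity factor $(q;q^2)_{\ceil{m_{n}/2}}$ in column $n$ with $q^{-\binom{\la'_n}{2}}(q;q)_{\la'_n}/(q;q)_{\la'_{n+1}}$-type factors. At each step we sum over one column variable $\la'_i=\mu'_i+j$ using the $q$-binomial theorem or the $q$-Chu--Vandermonde / $q$-Gauss sum, producing $q^{-\binom{\mu'_i}{2}}$ for $i\le n$ and $1$ for $i>n$ (the symmetric part gives $p^{n(\mu)}$-like contributions that cancel against $q^{-n(\mu)}$ in the $\Sur$ normalisation). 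The finite-$d$ factor $(q;q^2)_{\ceil{(d-l(\la))/2}}\qbin{d}{l(\la)}$ is handled in the first column, and there the summation gives $(q;q)_d/(q;q)_{d-l(\mu)}$.

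The main obstacle is the first-column summation for $n\ge1$ combined with the parity factor $(q;q^2)_{\ceil{(d-l(\la))/2}}$. We would split $d-l(\la)$ by parity and use the identity $\sum_{j}\qbin{m}{j}_q(-1)^j q^{\binom{j}{2}}\cdots$, in the form of the terminating $q$-Kummer (Bailey--Daum) summation, to collapse it. Finally, the limit statement follows by letting $d\to\infty$ termwise, justified by dominated convergence using the bound $(q;q)_d/(q;q)_{d-l(\mu)}\le1$ and nonnegativity. As a check, the case $n\to\infty$ recovers Theorem~\ref{Thm: moment P sym inf}.
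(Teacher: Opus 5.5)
Your framework (Hall--Littlewood principal specializations, the column-product formula for $P_{\la/\mu}(q,q^2,\dots;q)$, then iterated sums over the $\la'_i$) is the one the paper uses. However, you never actually evaluate the sums, and several of the specifics you give are wrong.

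\textbf{What goes wrong.} You assign the column types backwards. The factor $q^{-\sum_{i\le n}\binom{\la'_i}{2}}$ turns the Cohen--Lenstra weight $q^{2\binom{\la'_i}{2}}$ into the symmetric weight $q^{\binom{\la'_i}{2}}$ exactly for $i\le n$. Likewise $(q;q^2)_{\ceil{m_i(\la)/2}}/(q;q)_{m_i(\la)}=1/(q^2;q^2)_{\floor{m_i(\la)/2}}$ for $i<n$. The columns $i>n$ stay of Cohen--Lenstra type, as your own check that $n\to\infty$ recovers $\msympinf$ requires.

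Your accounting of powers of $q$ is also inconsistent. With $q^{-n(\mu)}$ pulled out in front, the remaining $\la$-sum must equal $q^{\sum_{i>n}\binom{\mu'_i}{2}}(q;q)_d/(q;q)_{d-l(\mu)}$. That is not what ``$q^{-\binom{\mu'_i}{2}}$ for $i\le n$ and $1$ for $i>n$'' produces.

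The factor $(q;q)_d/(q;q)_{d-l(\mu)}=(q;q)_{l(\mu)}\qbin{d}{l(\mu)}_q$ need not come from a first-column summation. By \eqref{Pdula} it is already a prefactor of $\mGartond(\la)\abs{\Sur_p(\la,\mu)}$, after which every column sum is exactly $1$.

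Your ``main obstacle'' is not one, and no parity splitting or Bailey--Daum summation is needed. With $\la'_0:=d$ and $m_0(\la):=d-l(\la)$, the factor $(q;q^2)_{\ceil{(d-l(\la))/2}}$ is just the $i=0$ member of $\prod_{i=0}^{n-1}(q;q^2)_{\ceil{m_i(\la)/2}}$.

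The induction on $n$ is not set up either. The ratio of the weights for $n$ and $n-1$ depends on $\la$, so the moments for $n-1$ do not determine those for $n$.

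\textbf{The missing idea.} By \eqref{Pdula} with $u=1$, $\mGartond(\la)\abs{\Sur_p(\la,\mu)}$ is $(q;q)_{l(\mu)}\qbin{d}{l(\mu)}_q$ times $\frac{q^{-\sum_{i\le n}\binom{\la'_i}{2}}(q;q)_{\la'_n}}{(q;q)_d}\prod_{i=0}^{n-1}(q;q^2)_{\ceil{m_i(\la)/2}}$ times the chain of transition kernels $\prod_{i\geq 1}L_{q^{\mu'_i}}(\la'_{i-1}-\mu'_i,\la'_i-\mu'_i)$. For $1\le i\le n$, absorb $(q;q^2)_{\ceil{m_{i-1}(\la)/2}}$ into the $i$-th kernel via
\[
(q;q^2)_{\ceil{(a-b)/2}}\,L_{q^c}(a,b)=q^{\binom{b}{2}+bc}\,\frac{(q;q)_{a+c}}{(q;q)_{b+c}}\,K(a,b),
\]
with $a=\la'_{i-1}-\mu'_i$, $b=\la'_i-\mu'_i$, $c=\mu'_i$, so that $a-b=m_{i-1}(\la)$. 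The extra factors multiply to $q^{\sum_{i\le n}(\binom{\la'_i}{2}-\binom{\mu'_i}{2})}(q;q)_d/(q;q)_{\la'_n}$. This telescopes against $q^{-\sum_{i\le n}\binom{\la'_i}{2}}(q;q)_{\la'_n}/(q;q)_d$. What remains is $q^{-\sum_{i\le n}\binom{\mu'_i}{2}}(q;q)_d/(q;q)_{d-l(\mu)}$ times $K$-kernels for $i\le n$ and $L$-kernels for $i>n$.

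You then sum from the last column backwards, using $\sum_b L_w(a,b)=1$ (Lemma~\ref{LemLsum1}) and $\sum_b K(a,b)=1$ (Lemma~\ref{Kabsum}). The order matters because the $i$-th kernel depends on $\la'_{i-1}$. The infinitely many columns are handled by restricting to $\la\subseteq(N^d)$ and dividing by $(q;q)_{\la'_N}$, which removes the last kernel $L_1(\la'_N,0)=(q;q)_{\la'_N}$.

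\textbf{The limit $d\to\infty$.} A bound on the total moment plus nonnegativity only gives an inequality, via Fatou. A genuine majorant is $(q;q)_\infty^{-2}\,\mGarton(\la)\abs{\Sur_p(\la,\mu)}$, which is summable by that Fatou inequality. It dominates because $\mGartond(\la)\le(q;q)_\infty^{-2}\,\mGarton(\la)$ for all $d$ and $\la$.
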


\subsection{Outline of the paper}

In the next section, we present computational results supporting Conjecture~\ref{p-Syl_conj}.
In Section~\ref{sec:Background} we introduce some background material, focusing 
on the combinatorial tools needed to compute moments later in the paper.
In Section~\ref{Sec: computing moments} we use these tools to study the moments of the measures discussed in the introduction, proving Theorems~\ref{Thm: Same moments S odd size}, \ref{Thm: Same moments alpha family} and \ref{Thm_r=1_one-par-family}.
In Section~\ref{Sec: Moments of other measures}, we compute the moments of the distributions introduced in Section~\ref{sec:compute_moments_intro}, thereby reproving Theorem~\ref{thm53Fulman} and proving Theorems~\ref{thm: moments of Malle} and \ref{thm: moments of Garton}.
In the final section, we prove several technical lemmas that are used earlier in the paper.

\section{Numerical Experiments Supporting Conjecture \ref{p-Syl_conj}}\label{sec_data}

In this section, we present numerical experiments providing evidence for Conjecture~\ref{p-Syl_conj}.  
All experiments were carried out in the computer algebra system Sage, and the corresponding code is available in the repository~\cite{FulmanKaplanSinghalWarnaar_SandpileBipartiteRepo}.

Conjecture~\ref{p-Syl_conj} for $p=3$ predicts that for $0<u<1$, $\frac{1}{3}<\alpha \leq 1$, and $\la$ a partition,
\[
\lim_{n\to\infty} \P\big((S_{G_{\alpha}(n,u)})_3 \cong G_{\la}\big) 
= P^{\Sym}_{\infty,3}(\la).
\]
We conducted experiments for $n=100$ and various values of $(\alpha, u)$, with $500$ samples each.
Figure~\ref{fig:3Sylow-bipartite} shows the results for seven partitions $\la$ and compares the observed frequencies with the values predicted by the conjectural distribution $P^{\Sym}_{\infty,3}(\la)$.

\begin{figure}[h]
\begin{center}
\begin{tikzpicture}[scale=0.7,line width=0.35pt]
\draw[->] (0,0)--(15,0);
\draw[->] (0,0)--(0,7.5);
\draw (15.5,0) node {$\lambda$};

\foreach \y in {1,2,...,7} {\draw (0,\y)--(-0.1,\y);}
\foreach \x in {1,3,...,13} {\draw (\x,0)--(\x,-0.1);}

\def\coordinatelist{0.1,0.2,0.3,0.4,0.5,0.6,0.7}
\foreach \p [count=\i from 1] in \coordinatelist 
           {\draw (-0.5,\i) node {$\p$};}

\def\partitions{0,(1),(2),(3),(1,1),(2,1),(3,1)}
\foreach \p [count=\i from 0] in \partitions
           {\draw (2*\i+1,-0.5) node {$\p$};}

\begin{scope}[blue]
\def\bluelist{1.16,0.78,0.34,0.06,0.6,0.12,0.06}
\foreach \p [count=\i from 0] in \bluelist
           {\draw[thick] (2*\i+0.1,\p)--(2*\i+1.8,\p);}
\end{scope}

\begin{scope}[orange]
\def\orangelist{5.8,2.24,0.64,0.26,0.26,0.1,0.1} 
\foreach \p [count=\i from 0] in \orangelist 
           {\draw[thick] (2*\i+0.1,\p)--(2*\i+1.8,\p);}
\end{scope}

\begin{scope}[green]
\def\greenlist{6.2,2.18,0.72,0.28,0.28,0.04,0.06}
\foreach \p [count=\i from 0] in \greenlist 
           {\draw[thick] (2*\i+0.1,\p)--(2*\i+1.8,\p);}
\end{scope}

\begin{scope}[red]
\def\redlist{6.24,2.22,0.68,0.36,0.26,0.04,0}
\foreach \p [count=\i from 0] in \redlist 
           {\draw[thick] (2*\i+0.1,\p)--(2*\i+1.8,\p);}
\end{scope}

\begin{scope}[purple]
\def\purplelist{6.54,1.9,0.64,0.34,0.32,0.12,0.06}
\foreach \p [count=\i from 0] in \purplelist 
           {\draw[thick] (2*\i+0.1,\p)--(2*\i+1.8,\p);}
\end{scope}

\begin{scope}[brown]
\def\brownlist{6.06,2.16,0.76,0.36,0.3,0.08,0.04} 
\foreach \p [count=\i from 0] in \brownlist 
          {\draw[thick] (2*\i+0.1,\p)--(2*\i+1.8,\p);}
\end{scope}

\begin{scope}[pink]
\def\pinklist{6.48,2.2,0.64,0.16,0.3,0.08,0.02}
\foreach \p [count=\i from 0] in \pinklist 
          {\draw[thick] (2*\i+0.1,\p)--(2*\i+1.8,\p);}
\end{scope}

\begin{scope}[gray]
\def\greylist{6.6,1.98,0.82,0.1,0.22,0.08,0.04}
\foreach \p [count=\i from 0] in \greylist 
          {\draw[thick] (2*\i+0.1,\p)--(2*\i+1.8,\p);}
\end{scope}

\begin{scope}[black]
\def\blacklist{6.39,2.13,0.71,0.237,0.266,0.079,0.026} 
\foreach \p [count=\i from 0] in \blacklist 
          {\draw[thick] (2*\i+0.1,\p)--(2*\i+1.8,\p);}
\end{scope}

\draw[thick,black] (13,7)--(13.5,7);
\draw (14,7) node[right] {conjecture};

\def\alphaulist{(0.3,0.5)/blue,(0.4,0.5)/orange,(0.7,0.2)/green,(0.7,0.5)/red,(0.7,0.8)/purple,(1,0.25)/brown,(1,0.5)/pink,(1,0.75)/gray}
\foreach \p/\q [count=\i from 0] in \alphaulist 
          {\draw[thick,\q] (13,6.5-0.5*\i)--(13.5,6.5-0.5*\i);
           \draw (14,6.5-0.5*\i) node[right] {$(\alpha,u)=\p$};}

\end{tikzpicture}
\end{center}
    
\caption{Comparison of the conjectural distribution $P^{\Sym}_{\infty,3}$ and the empirical distribution of $(S_{G_{\alpha}(100,u)})_3$ for various choices of $(\alpha,u)$ and $\lambda$ with sample size $500$.}
    \label{fig:3Sylow-bipartite}
\end{figure}

The data corresponding to $\alpha=0.3$ is visibly far from the conjectural distribution
$P^{\Sym}_{\infty,3}$, which is consistent with the fact that $\alpha<\frac{1}{3}=\frac{1}{p}$.
In contrast, for the data with $\alpha>\frac{1}{3}$ the empirical frequencies for these seven partitions are quite close to the conjectural values $P^{\Sym}_{\infty,3}$.

Conjecture~\ref{p-Syl_conj} for $p=2$, predicts that for $0<u<1$, $\frac{1}{2}<\alpha \leq 1$, we have
\[
\lim_{n \to \infty} \P\big((S_{G_{\alpha}(n,u)})_2 \cong G_{\la}\big) 
= P^{\Sym,1}_{2}(\la).
\]
We carried out experiments with $n=100$ and various values of $(\alpha,u)$, taking
$500$ samples for each pair and recording the frequency for which $(S_{G_{\alpha}(n,u)})_2$
is isomorphic to $G_{\la}$ for the same seven partitions $\la$.
Figure~\ref{fig:2Sylow-bipartite} compares these empirical frequencies
with the conjectural values $P^{\Sym,1}_{2}(\la)$.

\begin{figure}[h]
\begin{center}
\begin{tikzpicture}[scale=0.7,line width=0.35pt]
\draw[->] (0,0)--(15,0);
\draw[->] (0,0)--(0,5.5);
\draw (15.5,0) node {$\lambda$};

\foreach \y in {1,2,...,5} {\draw (0,\y)--(-0.1,\y);}
\foreach \x in {1,3,...,13} {\draw (\x,0)--(\x,-0.1);}

\def\coordinatelist{0.05,0.10,0.15,0.20,0.25}
\foreach \p [count=\i from 1] in \coordinatelist 
           {\draw (-0.65,\i) node {$\p$};}

\def\partitions{0,(1),(2),(3),(1,1),(2,1),(3,1)}
\foreach \p [count=\i from 0] in \partitions
           {\draw (2*\i+1,-0.5) node {$\p$};}

\begin{scope}[blue]
\def\bluelist{0.44,1.16,0.36,0.16,0.96,0.4,0.16}
\foreach \p [count=\i from 0] in \bluelist
           {\draw[thick] (2*\i+0.1,\p)--(2*\i+1.8,\p);}
\end{scope}

\begin{scope}[orange]
\def\orangelist{4.32,4.08,2.08,1.48,2.88,1.16,0.36} 
\foreach \p [count=\i from 0] in \orangelist 
           {\draw[thick] (2*\i+0.1,\p)--(2*\i+1.8,\p);}
\end{scope}

\begin{scope}[green]
\def\greenlist{3.92,4.08,1.68,0.8,2.76,1.16,0.92}
\foreach \p [count=\i from 0] in \greenlist 
           {\draw[thick] (2*\i+0.1,\p)--(2*\i+1.8,\p);}
\end{scope}

\begin{scope}[red]
\def\redlist{4.4,4.2,2.04,1,2.64,1.4,0.4}
\foreach \p [count=\i from 0] in \redlist 
           {\draw[thick] (2*\i+0.1,\p)--(2*\i+1.8,\p);}
\end{scope}

\begin{scope}[purple]
\def\purplelist{4.68,4.04,1.72,0.76,2.32,1.12,0.56}
\foreach \p [count=\i from 0] in \purplelist 
           {\draw[thick] (2*\i+0.1,\p)--(2*\i+1.8,\p);}
\end{scope}

\begin{scope}[brown]
\def\brownlist{4.28,4.52,1.84,0.92,2.6,1.08,0.48}
\foreach \p [count=\i from 0] in \brownlist 
          {\draw[thick] (2*\i+0.1,\p)--(2*\i+1.8,\p);}
\end{scope}

\begin{scope}[pink]
\def\pinklist{4.48,3.8,1.96,1.2,2.72,1.52,0.4}
\foreach \p [count=\i from 0] in \pinklist 
          {\draw[thick] (2*\i+0.1,\p)--(2*\i+1.8,\p);}
\end{scope}

\begin{scope}[gray]
\def\greylist{4.6,3.68,2.6,0.88,2.84,1.24,0.52}
\foreach \p [count=\i from 0] in \greylist 
          {\draw[thick] (2*\i+0.1,\p)--(2*\i+1.8,\p);}
\end{scope}

\begin{scope}[black]
\def\blacklist{4.194,4.194,2.098,1.048,2.796,1.048,0.524} 
\foreach \p [count=\i from 0] in \blacklist 
          {\draw[thick] (2*\i+0.1,\p)--(2*\i+1.8,\p);}
\end{scope}

\draw[thick,black] (13,6.5)--(13.5,6.5);
\draw (14,6.5) node[right] {conjecture};

\def\alphaulist{(0.45,0.5)/blue,(0.6,0.5)/orange,(0.7,0.2)/green,(0.7,0.8)/red,(0.8,0.5)/purple,(1,0.25)/brown,(1,0.5)/pink,(1,0.75)/gray}
\foreach \p/\q [count=\i from 0] in \alphaulist 
          {\draw[thick,\q] (13,6-0.5*\i)--(13.5,6-0.5*\i);
           \draw (14,6-0.5*\i) node[right] {$(\alpha,u)=\p$};}

\end{tikzpicture}
\end{center}
    
\caption{Comparison of the conjectural distribution $P^{\Sym,1}_2$ and the empirical distribution of $(S_{G_{\alpha}(100,u)})_2$ for various choices of $(\alpha,u)$ and $\lambda$ with sample size $500$.}
    \label{fig:2Sylow-bipartite}
\end{figure}

The data corresponding to $\alpha=0.45$ is far from the conjectural distribution
$P^{\Sym,1}_2$, which is consistent with the fact that $\alpha<1/2=1/p$.
In contrast, for the data with $\alpha>1/2$ the empirical frequencies for these seven partitions are all close to the conjectural values predicted by the distribution $P^{\Sym,1}_2$.

We determine the probability that the distribution $\spmk$ chooses a partition of a specified rank. We begin with a result of Wood.

\begin{prop}[{\!\!\cite[Corollary 9.4]{wood2017distribution}}]\label{wood_rank_dist}
If $\la$ is a random partition chosen according to $\msympinf$, then
\[
\P_{\msympinf}\big(\la_1'=r\big)
=\frac{1}{(-q;q)_{\infty}}\,\frac{q^{\binom{r+1}{2}}}{(q;q)_r}.
\]
\end{prop}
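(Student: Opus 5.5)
The statement is cited from Wood, but it can be reproved directly from \eqref{symforminf} by summing $\msympinf(\la)$ over all partitions $\la$ with $\la_1'=r$. I would parametrize such $\la$ by its conjugate: set $\la'_1=r\geq \la'_2\geq \la'_3\geq\cdots$. In these coordinates $n(\la)=\sum_{i\geq1}\binom{\la'_i}{2}$ and $\abs{\la}=\sum_i\la'_i$, so $q^{n(\la)+\abs{\la}}=\prod_i q^{\binom{\la'_i+1}{2}}$. The multiplicities are $m_i(\la)=\la'_i-\la'_{i+1}$. Hence
\[
\P\big(\la'_1=r\big)=(q;q^2)_\infty\sum_{r=\la'_1\geq\la'_2\geq\cdots}\prod_{i\geq1}\frac{q^{\binom{\la'_i+1}{2}}}{(q^2;q^2)_{\floor{(\la'_i-\la'_{i+1})/2}}}.
\]

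This is a chain sum, which I would evaluate by a transfer-operator (generating function) argument. Define $f(r)$ as the sum over all chains starting at $\la'_1=r$, with the factor $q^{\binom{r+1}{2}}$ included. Then
\[
f(r)=q^{\binom{r+1}{2}}\sum_{s=0}^{r}\frac{f(s)}{(q^2;q^2)_{\floor{(r-s)/2}}}, \qquad f(0)=1.
\]
Because the $s=r$ term contains $f(r)$ itself, this is an implicit recursion. The claim to verify is $f(r)=c\,q^{\binom{r+1}{2}}/(q;q)_r$ with $c=1/\big((q;q^2)_\infty(-q;q)_\infty\big)$. The $s=r$ term needs care: it is the contribution of all chains that stay constant at $r$ forever, and since $q^{\binom{r+1}{2}}<1$ for $r\geq1$, their total weight is zero. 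So in the correct setup the constant tail is a geometric factor that vanishes unless $r=0$. The clean way to organize this is to write $\la'$ as a strictly decreasing sequence of distinct values, each repeated a number of times. Only $\la'_i=0$ may repeat infinitely often, and each repetition carries weight $1$. A value $v>0$ repeated $k$ times contributes $q^{k\binom{v+1}{2}}$, together with the gap factors $1/(q^2;q^2)_{\floor{g/2}}$ between successive distinct values, where $g$ is the gap. The constant $c$ is then fixed by the $r=0$ case, or by normalization $\sum_r\P=1$ using Euler's identity $\sum_r q^{\binom{r+1}{2}}/(q;q)_r=(-q;q)_\infty$.

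The main obstacle is checking the identity $f(r)\propto q^{\binom{r+1}{2}}/(q;q)_r$, that is, the recursion
\[
\frac{1}{(q;q)_r}=\sum_{s<r}\frac{q^{\binom{s+1}{2}}}{(q;q)_s}\cdot\frac{1}{(q^2;q^2)_{\floor{(r-s)/2}}}\cdot\frac{1}{1-q^{\binom{r+1}{2}}}\cdots,
\]
with the exact repetition factors made precise. I would not grind through this directly. A much cleaner route uses the Hall--Littlewood interpretation the paper develops later. Via Theorem~\ref{Thm: moment P sym inf}, the moments $M_{p,(1^n)}(\msympinf)=p^{\binom n2}$ give $\E\big(\abs{\Sur(G_\la,(\Z/p)^n)}\big)=\E\big((p^{\la'_1})_{n}\text{-falling}\big)=p^{\binom n2}$, that is, $\E\prod_{j<n}(p^{\la'_1}-p^j)=p^{\binom n2}$. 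These moments grow slowly enough to determine the distribution of $\la'_1$ uniquely, by the same argument as in Theorem~\ref{Thm: Wood universality} applied to elementary abelian quotients. So it suffices to check that the candidate distribution $q^{\binom{r+1}{2}}/\big((-q;q)_\infty(q;q)_r\big)$ has these same moments. That check is a direct $q$-binomial computation: expand $\prod_{j<n}(p^r-p^j)$ via the $q$-binomial theorem and sum against $q^{\binom{r+1}{2}}/(q;q)_r$ using Euler's identity again. This moment-matching route is the proof I would actually write, falling back to Wood's Corollary~9.4 as stated.
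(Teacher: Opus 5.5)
Your moment-matching proof is correct, but it differs from what the paper does. The paper gives no proof; it only cites Wood. With the paper's own tools the statement is immediate from \eqref{F2019}: summing $K(\infty,\la_1')\prod_{i\geq1}K(\la_i',\la_{i+1}')$ over $\la_2',\la_3',\dots$ with Lemma~\ref{Kabsum} leaves $\P(\la_1'=r)=K(\infty,r)=(q;q^2)_\infty\,q^{\binom{r+1}{2}}/(q;q)_r$, and $(q;q^2)_\infty(-q;q)_\infty=1$ finishes it.

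Your abandoned first sketch is exactly this argument. The recursion for $f(r)$, with the $s=r$ term included, is already correct as written: since $f(r)<\infty$ and $q^{\binom{r+1}{2}}<1$ for $r\geq1$, it determines $f(r)$ from $f(0),\dots,f(r-1)$. Multiply by $(q;q)_r$ and use $(q;q)_m/(q^2;q^2)_{\floor{m/2}}=(q;q^2)_{\ceil{m/2}}$; the identity you called the main obstacle is then literally $\sum_s K(r,s)=1$. No bookkeeping of repeated values is needed.

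In your second route, the uniqueness step should be a direct application rather than an analogy. Apply Theorem~\ref{Thm: Wood universality} to the push-forwards of $\msympinf$ and of your candidate under $\la\mapsto(1^{\la_1'})$. Their $\mu$-moments vanish unless $\mu=(1^n)$, where they equal $p^{\binom n2}=p^{n(\mu)}$, so the hypothesis holds exactly. For non-prime $p$, one checks $\abs{\Sur_p(\la,(1^n))}=\prod_{j<n}(p^{\la_1'}-p^j)$ from \eqref{Sur2}.

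The moment check is also simpler than a $q$-binomial expansion. Write $\prod_{j<n}(p^r-p^j)=p^{rn}(q;q)_r/(q;q)_{r-n}$ and note $\binom{r+1}{2}-rn=\binom{r-n+1}{2}-\binom n2$. Euler's identity then gives $p^{\binom n2}$ directly.

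What each route buys:
\begin{itemize}
\item The chain argument is elementary and self-contained given Lemma~\ref{Kabsum}. It also describes the whole law of $\la'$, not just the marginal of $\la_1'$.
\item Your route rests on two external results, the moment computation of Clancy et al.\ and Wood's uniqueness theorem, but needs no factorization of the measure. It also shows why the rank law is pinned down: the elementary-abelian moments $p^{\binom n2}$ lie strictly below the level $p^{\binom{n+1}{2}}$ at which uniqueness fails, as witnessed by the family $P^{\textrm{M},(1)}_{p,\alpha}$.
\end{itemize}
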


Proposition~\ref{wood_rank_dist} implies the following result.

\begin{prop}\label{rank_dist}
The probability $\mathfrak{p}_{p,k}(r)$ that a partition chosen from the distribution $\spmk$ has $r$ parts is
\[
\mathfrak{p}_{p,k}(r)=\frac{1}{(-1;q)_{k}(-q;q)_{\infty}}\,\frac{q^{\binom{r-k+1}{2}}}{(q;q)_r}.
\]
\end{prop}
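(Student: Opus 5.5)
The plan is to reduce everything to Proposition~\ref{wood_rank_dist}. By definition \eqref{mktilda} with $\kappa=(k)$, the weight $p^{\kappa_1\la_1'}=p^{k\la_1'}$ depends only on $l(\la)=\la_1'$. Hence
\[
\mathfrak{p}_{p,k}(r)=\frac{q^{\binom{k}{2}}}{(-1;q)_k}\sum_{\la:\,\la_1'=r}p^{kr}\msympinf(\la)
=\frac{q^{\binom{k}{2}}p^{kr}}{(-1;q)_k}\,\P_{\msympinf}\big(\la_1'=r\big).
\]
Inserting Proposition~\ref{wood_rank_dist} gives
\[
\mathfrak{p}_{p,k}(r)=\frac{1}{(-1;q)_k(-q;q)_\infty}\,\frac{q^{\binom{k}{2}-kr+\binom{r+1}{2}}}{(q;q)_r}.
\]
The sum over $\la$ is absolutely convergent, since all terms are nonnegative, so rearranging is harmless.

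It remains to check the exponent identity
\[
\binom{k}{2}-kr+\binom{r+1}{2}=\binom{r-k+1}{2}.
\]
This is a direct expansion. Twice the left side is $k^2-k-2kr+r^2+r$. Twice the right side is $(r-k+1)(r-k)=r^2-2kr+k^2+r-k$. The two agree.

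The identity also holds when $r<k$, with $\binom{m}{2}=m(m-1)/2$ read as a polynomial in $m$. This is consistent with the stated formula, where for example $r=k-1$ gives exponent $0$.

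There is no real obstacle here. The only point to state carefully is that the tilt factor depends on $\la$ only through $\la_1'$. That is what lets the sum over all $\la$ with $r$ parts collapse to Wood's rank distribution.

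As a sanity check, not needed for the proof, one can sum over $r$ to confirm total mass $1$. This is the $\mu=0$ case of Theorem~\ref{Thm_r=1_one-par-family}, and it is equivalent to the $q$-binomial identity $\sum_r q^{\binom{r-k+1}{2}}/(q;q)_r=(-1;q)_k(-q;q)_\infty$.
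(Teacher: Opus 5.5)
Your proposal is correct and follows the paper's route: the paper states that Proposition~\ref{wood_rank_dist} implies the result, and you supply the details. Those details are that the tilt $p^{k\la_1'}$ depends only on $\la_1'$, that the sum therefore collapses to Wood's rank distribution, and that the exponent identity $\binom{k}{2}-kr+\binom{r+1}{2}=\binom{r-k+1}{2}$ holds, which you verified correctly.
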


Conjecture~\ref{p-Syl_conj} implies the following conjecture.

\begin{conj}\label{p-Syl_ranks_conj}
Let $p$ be prime, $\frac{1}{p}<\alpha\leq 1$, and $r$ a nonnegative integer.
\begin{enumerate}
\item If $p$ is odd, then
\[
\lim_{n\to\infty} \P\big(\rank((S_{G_{\alpha}(n,u)})_p) = r\big)
=\mathfrak{p}_{p,0}(r) 
=\frac{1}{(-q;q)_{\infty}}\,\frac{q^{\binom{r+1}{2}}}{(q;q)_r}.
\]
\item If $p=2$, then
\[
\lim_{n \to \infty} \P\big(\rank((S_{G_{\alpha}(n,u)})_2) = r\big) 
= \mathfrak{p}_{2,1}(r)
=\frac{1}{(-1;q)_{\infty}} \,\frac{q^{\binom{r}{2}}}{(q;q)_r}.
\]
\end{enumerate}
\end{conj}

For various values of $\alpha$ and $u$, we randomly generated $500$ bipartite graphs $G_{\alpha}(n,u)$ with $n=100$ to evaluate the $2$-ranks of their sandpile groups.  The resulting distributions, along with the conjectured distribution from Conjecture~\ref{p-Syl_conj} are plotted in Figure~\ref{fig: 2-rank}.
For $\alpha>1/2$ and all values of $u$, there is close agreement with the conjectured distribution. We then conducted a parallel experiment, generating a fresh sample of $500$ graphs for each $(\alpha,u)$ pair to independently evaluate the $3$-ranks.  Figure~\ref{fig: 3-rank} plots these results.  As in the case $p=2$, we see a clear threshold phenomenon in the parameter $\alpha$.  For $\alpha>1/3$, the empirical $3$-rank distributions track the conjectural curve quite closely.  In contrast, when $\alpha$ falls below the $1/p$ threshold (i.e., $\alpha = 0.45$ for $p=2$, and $\alpha = 0.3$ for $p=3$), the empirical distributions place much less mass near small ranks and exhibit noticeably heavier tails.

\begin{figure}[h]
\begin{center}
\begin{tikzpicture}[scale=0.7,line width=0.35pt]
\begin{scope}[yscale=1.2]
\draw[->] (0,0)--(15,0);
\draw[->] (0,0)--(0,5.5);
\draw (15.7,0) node {$\rank$};
\draw (0,6) node {frequency};

\foreach \y in {1,2,...,5} {\draw (0,\y)--(-0.1,\y);}
\foreach \x in {1,4,...,13} {\draw (\x,0)--(\x,-0.1);}

\def\coordinatelist{0.1,0.2,0.3,0.4,0.5}
\foreach \p [count=\i from 1] in \coordinatelist 
           {\draw (-0.65,\i) node {$\p$};}

\def\ranklist{1,2,3,4,5}
\foreach \p [count=\i from 0] in \ranklist
           {\draw (3*\i+1,-0.5) node {$\p$};}

\begin{scope}[blue]
\def\blacklist{0.22,0.9,0.92,0.7,0.94} 
\foreach \p [count=\i from 0] in \blacklist 
          {\fill (3*\i+1,\p) circle (1mm);}
\draw[dashed] (1,0.22)--(4,0.9)--(7,0.92)--(10,0.7)--(13,0.94);
\end{scope}

\begin{scope}[orange]
\def\orangelist{2.16,4.34,2.78,0.7,0} 
\foreach \p [count=\i from 0] in \orangelist
          {\fill (3*\i+1,\p) circle (1mm);}
\draw[dashed] (1,2.16)--(4,4.34)--(7,2.78)--(10,0.7)--(13,0);
\end{scope}

\begin{scope}[green]
\def\greenlist{1.96,3.98,3.02,0.82,0.2}
\foreach \p [count=\i from 0] in \greenlist 
          {\fill (3*\i+1,\p) circle (1mm);}
\draw[dashed] (1,1.96)--(4,3.98)--(7,3.02)--(10,0.82)--(13,0.2);
\end{scope}

\begin{scope}[red]
\def\redlist{2.2,4.14,2.84,0.68,0.14}
\foreach \p [count=\i from 0] in \redlist 
          {\fill (3*\i+1,\p) circle (1mm);}
\draw[dashed] (1,2.2)--(4,4.14)--(7,2.84)--(10,0.68)--(13,0.14);
\end{scope}

\begin{scope}[purple]
\def\purplelist{2.34,3.9,2.68,0.94,0.12}
\foreach \p [count=\i from 0] in \purplelist 
          {\fill (3*\i+1,\p) circle (1mm);}
\draw[dashed] (1,2.34)--(4,3.9)--(7,2.68)--(10,0.94)--(13,0.12);
\end{scope}

\begin{scope}[brown]
\def\brownlist{2.14,4.12,2.76,0.84,0.12}
\foreach \p [count=\i from 0] in \brownlist 
          {\fill (3*\i+1,\p) circle (1mm);}
\draw[dashed] (1,2.14)--(4,4.12)--(7,2.76)--(10,0.84)--(13,0.12);
\end{scope}

\begin{scope}[pink]
\def\pinklist{2.24,3.86,3.02,0.8,0.08}
\foreach \p [count=\i from 0] in \pinklist 
          {\fill (3*\i+1,\p) circle (1mm);}
\draw[dashed] (1,2.24)--(4,3.86)--(7,3.02)--(10,0.8)--(13,0.08);
\end{scope}

\begin{scope}[gray]
\def\greylist{2.3,4.14,2.8,0.64,0.12}
\foreach \p [count=\i from 0] in \greylist 
          {\fill (3*\i+1,\p) circle (1mm);}
\draw[dashed] (1,2.3)--(4,4.14)--(7,2.8)--(10,0.64)--(13,0.12);
\end{scope}

\begin{scope}[black]
\def\blacklist{2.09711,4.19422,2.79615,0.79890,0.10652} 
\foreach \p [count=\i from 0] in \blacklist 
          {\fill (3*\i+1,\p) circle (1mm);}
\draw[dashed] (1,2.09711)--(4,4.19422)--(7,2.79615)--(10,0.7989)--(13,0.10652);
\end{scope}
\end{scope}

\draw[thick,black] (13,6.5)--(13.5,6.5);
\draw (14,6.5) node[right] {conjecture};

\def\alphaulist{(0.45,0.5)/blue,(0.6,0.5)/orange,(0.7,0.2)/green,(0.7,0.8)/red,(0.8,0.5)/purple,(1,0.25)/brown,(1,0.5)/pink,(1,0.75)/gray}
\foreach \p/\q [count=\i from 0] in \alphaulist 
          {\draw[thick,\q] (13,6-0.5*\i)--(13.5,6-0.5*\i);
           \draw (14,6-0.5*\i) node[right] {$(\alpha,u)=\p$};}

\end{tikzpicture}
\end{center}

\caption{Distribution of the $2$-rank of the sandpile group of a random bipartite graph.}
    \label{fig: 2-rank}
\end{figure}

\begin{figure}[h]
\begin{center}
\begin{tikzpicture}[scale=0.7,line width=0.35pt]
\begin{scope}[yscale=1.1]
\draw[->] (0,0)--(15,0);
\draw[->] (0,0)--(0,7.5);
\draw (15.7,0) node {$\rank$};
\draw (0,8) node {frequency};

\foreach \y in {1,2,...,7} {\draw (0,\y)--(-0.1,\y);}
\foreach \x in {1,4,...,13} {\draw (\x,0)--(\x,-0.1);}

\def\coordinatelist{0.1,0.2,0.3,0.4,0.5,0.6,0.7}
\foreach \p [count=\i from 1] in \coordinatelist 
           {\draw (-0.65,\i) node {$\p$};}

\def\ranklist{1,2,3,4,5}
\foreach \p [count=\i from 0] in \ranklist
           {\draw (3*\i+1,-0.5) node {$\p$};}

\begin{scope}[blue]
\def\blacklist{1.16,1.2,0.82,1.04,0.74} 
\foreach \p [count=\i from 0] in \blacklist 
          {\fill (3*\i+1,\p) circle (1mm);}
\draw[dashed] (1,1.16)--(4,1.2)--(7,0.82)--(10,1.04)--(13,0.74);
\end{scope}

\begin{scope}[orange]
\def\orangelist{5.8,3.22,0.5,0.16,0.14} 
\foreach \p [count=\i from 0] in \orangelist
          {\fill (3*\i+1,\p) circle (1mm);}
\draw[dashed] (1,5.8)--(4,3.22)--(7,0.5)--(10,0.16)--(13,0.14);
\end{scope}

\begin{scope}[green]
\def\greenlist{6.2,3.38,0.42,0,0}
\foreach \p [count=\i from 0] in \greenlist 
          {\fill (3*\i+1,\p) circle (1mm);}
\draw[dashed] (1,6.2)--(4,3.38)--(7,0.42)--(10,0)--(13,0);
\end{scope}

\begin{scope}[red]
\def\redlist{6.24,3.38,0.36,0.02,0}
\foreach \p [count=\i from 0] in \redlist 
          {\fill (3*\i+1,\p) circle (1mm);}
\draw[dashed] (1,6.24)--(4,3.38)--(7,0.36)--(10,0.02)--(13,0);
\end{scope}

\begin{scope}[purple]
\def\purplelist{6.54,2.94,0.5,0.02,0}
\foreach \p [count=\i from 0] in \purplelist 
          {\fill (3*\i+1,\p) circle (1mm);}
\draw[dashed] (1,6.54)--(4,2.94)--(7,0.5)--(10,0.02)--(13,0);
\end{scope}

\begin{scope}[brown]
\def\brownlist{6.06,3.48,0.42,0.04,0}
\foreach \p [count=\i from 0] in \brownlist 
          {\fill (3*\i+1,\p) circle (1mm);}
\draw[dashed] (1,6.06)--(4,3.48)--(7,0.42)--(10,0.04)--(13,0);
\end{scope}

\begin{scope}[pink]
\def\pinklist{6.48,3.06,0.44,0.02,0}
\foreach \p [count=\i from 0] in \pinklist 
          {\fill (3*\i+1,\p) circle (1mm);}
\draw[dashed] (1,6.48)--(4,3.06)--(7,0.44)--(10,0.02)--(13,0);
\end{scope}

\begin{scope}[gray]
\def\greylist{6.6,2.96,0.4,0.04,0}
\foreach \p [count=\i from 0] in \greylist 
          {\fill (3*\i+1,\p) circle (1mm);}
\draw[dashed] (1,6.6)--(4,2.96)--(7,0.4)--(10,0.04)--(13,0);
\end{scope}

\begin{scope}[black]
\def\blacklist{6.39005,3.19502,0.39938,0.01536,0.00019} 
\foreach \p [count=\i from 0] in \blacklist 
          {\fill (3*\i+1,\p) circle (1mm);}
\draw[dashed] (1,6.39005)--(4,3.19502)--(7,0.39938)--(10,0.01536)--(13,0.00019);
\end{scope}
\end{scope}

\draw[thick,black] (13,6.5)--(13.5,6.5);
\draw (14,6.5) node[right] {conjecture};

\def\alphaulist{
(0.3,0.5)/blue,(0.4,0.5)/orange,(0.7,0.2)/green,(0.7,0.5)/red,(0.7,0.8)/purple,(1,0.25)/brown,(1,0.5)/pink,(1,0.75)/gray}
\foreach \p/\q [count=\i from 0] in \alphaulist 
          {\draw[thick,\q] (13,6-0.5*\i)--(13.5,6-0.5*\i);
           \draw (14,6-0.5*\i) node[right] {$(\alpha,u)=\p$};}

\end{tikzpicture}
\end{center}

\caption{Distribution of the $3$-rank of the sandpile group of random bipartite graphs.}
    \label{fig: 3-rank}
\end{figure}

Next, we compute the expected value of $p^{l(\la)}$, for a random partition $\la$ coming from $\spmk$.
This is a natural quantity to consider as $p^{\rank((S_{G_{\alpha}(n,u)})_p)}=\abs{S_{G_{\alpha}(n,u)}/ pS_{G_{\alpha}(n,u)}}$.

\begin{lemma}\label{Lem: Exp p^p rank}
Let $\la$ be a random partition chosen from the distribution $\spmk$.
The expected value of $p^{l(\la)}$ is $1+p^k$.    
\end{lemma}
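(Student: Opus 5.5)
The plan is to reduce the claim to a one-variable sum over the rank distribution in Proposition~\ref{rank_dist}, and evaluate that sum with Euler's identity
\[
\sum_{m\geq 0}\frac{q^{\binom{m}{2}}z^m}{(q;q)_m}=(-z;q)_\infty .
\]
Since $l(\la)=\la_1'$, we have
\[
\E_{\spmk}\big(p^{l(\la)}\big)=\sum_{r\geq 0}p^r\,\mathfrak{p}_{p,k}(r)
=\frac{1}{(-1;q)_k(-q;q)_\infty}\sum_{r\geq 0}\frac{q^{\binom{r-k+1}{2}-r}}{(q;q)_r}.
\]

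For the exponent, expanding the binomial gives
\[
\binom{r-k+1}{2}-r=\binom{r}{2}-(k-1)r+\binom{k}{2}-r=\binom{r}{2}-kr+\binom{k}{2}.
\]
Hence the sum equals $q^{\binom{k}{2}}\sum_r q^{\binom{r}{2}}(q^{-k})^r/(q;q)_r$. By Euler's identity this is
\[
q^{\binom{k}{2}}(-q^{-k};q)_\infty .
\]

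Next, split off the first $k+1$ factors of this product:
\[
(-q^{-k};q)_\infty=(-q^{-k};q)_{k+1}\,(-q;q)_\infty,
\qquad
(-q^{-k};q)_{k+1}=\prod_{j=0}^{k}(1+q^{-j}).
\]
The factor $(-q;q)_\infty$ cancels against the denominator. We also have $(-1;q)_k=\prod_{j=0}^{k-1}(1+q^j)$.

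To compare the two finite products, write $1+q^{-j}=q^{-j}(1+q^j)$. This gives
\[
\prod_{j=0}^{k}(1+q^{-j})=q^{-\binom{k+1}{2}}\prod_{j=0}^{k}(1+q^j)=q^{-\binom{k+1}{2}}(-1;q)_k(1+q^k).
\]

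Putting everything together, the expectation equals
\[
q^{\binom{k}{2}-\binom{k+1}{2}}(1+q^k)=q^{-k}(1+q^k)=p^k+1,
\]
as claimed.

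The main things to check are the convergence of the series, which is absolute because of the quadratic exponent $q^{\binom{r}{2}}$, and the exponent bookkeeping in the last simplification. I do not expect a genuine obstacle; the only delicate point is relying on Proposition~\ref{rank_dist}, which the paper has already stated. As a sanity check, the case $k=0$ gives $\E(p^{l(\la)})=2$, which matches the moment $M_{p,(1)}=p^{n(\mu)+kl(\mu)}\big|_{\mu=(1),k=0}=1$ via $\E(p^{l})=1+M_{p,(1)}$. For general $k$ the same relation gives $1+p^k$, consistent with Theorem~\ref{Thm_r=1_one-par-family}; this observation provides an alternative proof.
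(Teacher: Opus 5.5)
Your argument is correct, but it takes a different route from the paper. The paper derives the lemma from Theorem~\ref{Thm_r=1_one-par-family}: it writes $p^{l(\la)}=\abs{\Sur_p(\la,(1))}+\abs{\Sur_p(\la,0)}$ and reads off the two moments $p^k$ and $1$. This is exactly the alternative you mention in your last sentence.

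You instead sum $p^r$ against the rank law of Proposition~\ref{rank_dist} and close the sum with Euler's identity. The exponent identity $\binom{r-k+1}{2}-r=\binom{r}{2}-kr+\binom{k}{2}$, the splitting $(-q^{-k};q)_\infty=(-q^{-k};q)_{k+1}(-q;q)_\infty$, and the final simplification to $q^{-k}(1+q^k)$ all check out. In the paper's notation, your computation is the ratio of \eqref{Kinfbsumk} at $k+1$ to \eqref{Kinfbsumk} at $k$, because the marginal of $\la_1'$ under $\msympinf$ is $K(\infty,b)$ by \eqref{F2019} and Lemma~\ref{Kabsum}.

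Your route is more elementary. It uses only the law of $\la_1'$, not the Hall--Littlewood formula for $\abs{\Sur_p}$ or the full moment theorem. It is also plainly valid for every real $p>1$. The paper's identity $p^{l(\la)}=\abs{\Hom(G_\la,G_{(1)})}$ is phrased for actual groups; for non-prime $p$ one has to check from \eqref{Sur2} that $\abs{\Sur_p(\la,(1))}=p^{\la_1'}-1$.

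The paper's route, once the moments are known, costs nothing. It also applies verbatim to every $\qpmk$ with $\alpha\in[-1,1]$. With your method you would additionally need $\sum_r(-1)^r p^r\mathfrak{p}_{p,k}(r)=0$. That follows from Euler's identity with $z=-q^{-k}$, since the product $(q^{-k};q)_\infty$ vanishes, or equivalently from \eqref{zero} with $k+1$ in place of $k$.
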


\begin{proof}[Proof of Lemma~\ref{Lem: Exp p^p rank} assuming Theorem~\ref{Thm_r=1_one-par-family}]
Note that
\[
p^{l(\la)} =\abs{\Hom(G_{\la},G_{(1)})} =\abs{\Sur(G_{\la},G_{(1)})} + \abs{\Sur(G_{\la},G_0)}.\]
Theorem~\ref{Thm_r=1_one-par-family} implies that $\E(\abs{\Sur(G_{\la},G_{(1)})}) =p^k$ and $\E(\abs{\Sur(G_{\la},G_0)}) =1$.
\end{proof}

Conjecture~\ref{p-Syl_ranks_conj} thus suggests that the following should hold:
\begin{enumerate}
    \item If $p$ is an odd prime and $\frac{1}{p}<\alpha\leq 1$, then 
    \[
    \lim_{n\to\infty} \E\big(p^{\rank((S_{G_{\alpha}(n,u)})_p)}\big) = 2.
    \]
    \item If $p=2$ and $\frac{1}{2}<\alpha\leq 1$, then
    \[
    \lim_{n \to \infty} \E\big(2^{\rank((S_{G_{\alpha}(n,u)})_2)}\big) = 3.
    \]
\end{enumerate}
However, based on our experiments this might not always be correct. 
Perhaps these limits hold for $\beta<\alpha\leq 1$ for some $\beta>1/p$.

Table~\ref{tab:2Sylow-expectation} summarizes the results for $p=2$ that were observed for $500$ random graphs for each $(\alpha, u)$.
Notice that $\alpha=0.45<1/2$ produces an extremely large empirical value.  Because the expected values for small $\alpha$ diverge so massively from the asymptotic predictions, we display the results in Table~\ref{tab:2Sylow-expectation} rather than in a graph to explicitly highlight the scale of these extreme values. 
For $\alpha>1/2$, the empirical expectations are already reasonably close to $3$.

\begin{table}[ht]
\centering
\resizebox{\textwidth}{!}{
\begin{tabular}{|c|c|c|c|c|c|c|c|c|}
\hline
 $(\alpha, u)$ & $(0.45,0.5)$ & $(0.6,0.5)$ & $(0.8,0.5)$ & $(0.7,0.2)$ & $(0.7,0.8)$ & $(1,0.25)$ & $(1,0.5)$ & $(1,0.75)$ \\
\hline
$\mathbb{E}(2^{\mathrm{rank}})$ & 11382.922 & 2.820 & 3.094 & 3.240 & 2.952 & 3.070 & 2.972 & 2.882 \\
\hline
\end{tabular}
}
\caption{Observed values of $\mathbb{E}(2^{\mathrm{rank}})$ for $(S_{G_{\alpha}(n,u)})_2$ with $n=100$ and $500$ samples for each $(\alpha,u)$.}
\label{tab:2Sylow-expectation}
\end{table}

We ran a parallel experiment for $p=3$. Table~\ref{tab:3Sylow-expectation} reports the empirical values obtained from our simulations of $500$ samples for several $(\alpha,u)$. 

In the experiment with $\alpha=0.4$, and $u=0.5$ we obtained
$\mathbb{E}(3^{\rank})=1110.184\dots$ from $500$ sampled graphs.
This is almost entirely explained by two unusual graphs: one whose
Sylow $3$-subgroup has $3$-rank $12$ ($\la=(1^{12})$) and
one with $3$-rank $9$ ($\la=(2,1^{8})$).
These two graphs contribute $1102.25\dots$ in the empirical average.
The remaining $498$ graphs contribute only $7.97\dots$ to the average, which is still larger than $2$ but orders of magnitude smaller than the full value $1110.184$. 
If we further exclude four graphs with $3$-rank $5$, then the observed average comes down to $2.14\dots$.

For $\alpha=0.3$ the observed expectation is much larger than $2$, which may be understood since $0.3<1/3$.
This time the large deviation from $2$ is not caused by a few outliers; $289$ out of the $500$ graphs in our sample had a $3$-rank of at least $4$.  For larger $\alpha$ the observed expectations are all close to $2$.

\begin{table}[ht]
\centering
\resizebox{\textwidth}{!}{
\begin{tabular}{|c|c|c|c|c|c|c|c|c|}
\hline
 $(\alpha,u)$ & $(0.3,0.5)$ & $(0.4,0.5)$ & $(0.7,0.5)$ & $(0.7,0.2)$ & $(0.7,0.8)$ & $(1,0.25)$ & $(1,0.5)$ & $(1,0.75)$ \\
\hline
$\mathbb{E}(3^{\mathrm{rank}})$ &  71873408 & 1110 & 2.016 & 2.012 & 2.040 & 2.136 & 2.016 & 2.016 \\
\hline
\end{tabular}
}
\caption{Observed values of $\mathbb{E}(3^{\mathrm{rank}})$ for $(S_{G_{\alpha}(n,u)})_3$ with $n=100$ and $500$ samples for each $(\alpha,u)$.}
\label{tab:3Sylow-expectation}
\end{table}

\section{Preliminary material}\label{sec:Background}

\subsection{Hall--Littlewood symmetric polynomials}\label{Subsec: defn of sym polynomial}

A crucial ingredient in our computations of moments is the theory of Hall--Littlewood symmetric functions. These have numerous applications, such as to the representation theory of the general linear group over a finite field.
A comprehensive treatment is given by Macdonald in \cite[Chapter 3]{macdonald1995symmetric} and we adhere to Macdonald's notation in this paper, except for the use of $q$ instead of $t$ in the definition of Hall--Littlewood polynomials.

Let $\mathbb{Q}(q)[x_1,\dots,x_n]^{S_n}$ be the ring of symmetric polynomials in $n$ variables with coefficients in $\mathbb{Q}(q)$.
Two important bases in this ring are given by the Hall--Littlewood polynomials $P_{\la}(q)=P_{\la}(x_1,\dots,x_n;q)$ and $Q_{\la}(q)=Q_{\la}(x_1,\dots,x_n;q)$, where $\la$ ranges over all partitions of length at most $n$.
To define these polynomials, let $\la$ be a partition such that $l(\la)\leq n$, and let
\[
v_{\la}(q):= \frac{(q;q)_{n-l(\la)}}{(1-q)^{n-l(\la)}} 
\prod_{i\geq 1} \frac{(q;q)_{m_i(\la)}}{(1-q)^{m_i(\la)}}.
\]
Then
\[
P_{\la}(x_1,\dots,x_n;q) 
:=\frac{1}{v_{\la}(q)} \sum_{w\in S_n} w\bigg( x_1^{\la_1}\dots x_n^{\la_n} \prod_{i<j} \frac{x_i-q x_j}{x_i-x_j}\bigg),
\]
where $S_n$ acts by permuting the variables $x_1,\dots,x_n$.
To define the second family of Hall--Littlewood polynomials we require the
multiplicative factor
\[
b_{\la}(q):=\prod_{i\geq 1} (q;q)_{m_i(\la)},
\]
which then gives
\begin{equation}\label{QP}
Q_{\la}(q):=b_{\la}(q) P_{\la}(q).
\end{equation}
Thanks to the stability property
\begin{equation}\label{stable}
P_{\la}(x_1,\dots,x_{n-1},0;q)=
\begin{cases}
P_{\la}(x_1,\dots,x_{n-1};q) & \text{if $l(\la)\leq n-1$}, \\
0 & \text{otherwise},
\end{cases}
\end{equation}
the Hall--Littlewood polynomials may be extended to symmetric functions in countably many variables $x_1,x_2,\dots$.
In particular,
\[
P_{\la}(x_1,\dots,x_n,0,0,\dots;q)=
\begin{cases}
P_{\la}(x_1,\dots,x_n;q) & \text{if $l(\la)\leq n$}, \\
0 & \text{otherwise},
\end{cases}
\]
in accordance with \eqref{stable}.

Given the two families of Hall--Littlewood polynomials,
we define the structure constants $f_{\mu\nu}^{\la}(q)$ and $g_{\mu\nu}^{\la}(q)$ by
\[
P_{\mu}(q)\,P_{\nu}(q)=\sum_{\la} f_{\mu\nu}^{\la}(q)\,P_\la(q)
\quad\text{and}\quad
Q_{\mu}(q)\,Q_{\nu}(q)=\sum_{\la} g_{\mu\nu}^{\la}(q)\,Q_\la(q).
\]
The \emph{skew} Hall–-Littlewood polynomials are then defined by
\[
P_{\la/\mu}(q):=\sum_{\nu} f_{\mu\nu}^{\la}(q)\,P_{\nu}(q)
\quad\text{and}\quad
Q_{\la/\mu}(q):=\sum_{\nu} g_{\mu\nu}^{\la}(q)\,Q_\nu(q),
\]
which are zero unless $\mu\subseteq\la$.
Equivalently, using $Q_{\la}(q)=b_{\la}(q)\,P_{\la}(q)$ with $b_{\la}(q)$ as above, one has
\[
g_{\mu\nu}^{\la}(q)=\frac{b_{\la}(q)}{b_{\mu}(q)\,b_{\nu}(q)}\,f_{\mu\nu}^{\la}(q),
\]
and hence
\begin{equation}\label{Eqn: Skew Q to P}
Q_{\la/\mu}(q)
=\frac{b_{\la}(q)}{b_{\mu}(q)}\,P_{\la/\mu}(q).
\end{equation}
This function admits the principal specialization formula (see \cite[Equation~(4.3)]{warnaar2012dedekind})
\begin{equation}\label{Eqn: Q la minus mu}
Q_{\la/\mu}\big(1,q,q^2,\dots;q\big)
=q^{n(\la/\mu)}\prod_{i\geq 1} 
\qbin{\la'_i-\mu'_{i+1}}{\la'_i-\mu'_i}_q,
\end{equation}
which for $\mu=0$ simplifies to \cite[page~213]{macdonald1995symmetric}
\begin{equation}\label{Eqn: Q la 1 q q2}
Q_{\la}\big(1,q,q^2,\dots;q\big)=q^{n(\la)}.
\end{equation}

A final result needed in this paper is the skew Cauchy identity \cite[page~227]{macdonald1995symmetric}
\begin{equation}\label{Eqn: Skew Cauchy identity}
\sum_{\la} Q_{\la}(x;q) P_{\la/\mu}(y;q)=
Q_{\mu}(x;q) \prod_{i,j\geq 1} \frac{1-qx_iy_j}{1-x_iy_j},
\end{equation}
where $x=(x_1,x_2,\dots)$ and $y=(y_1,y_2,\dots)$ are alphabets of countably-many variables.

\subsection{$q$-Hypergeometric series}

Besides the use of Hall--Littlewood symmetric functions, many of our proofs require results from the theory of \emph{basic hypergeometric series}.
In this section, we recall the definition of such series and provide a list of summation and transformation formulas used in the remainder of the paper.

Let $(a_1,\dots,a_k;q)_n:=(a_1;q)_n\cdots (a_k;q)_n$.
Then, for nonnegative integers $r,s$, the
${_r\phi_s}$ basic hypergeometric series is defined as \cite{GR04}
\begin{align*}
\qhyp{r}{s}{a_1,\dots,a_r}{b_1,\dots,b_s}{z,q}&\hphantom{:}=
{_r\phi_s}(a_1,\dots,a_r;b_1,\dots,b_s;z,q) \\
&:=\sum_{k=0}^{\infty} \frac{(a_1,\dots,a_r;q)_k}{(q,b_1,\dots,b_s;q)_k}
\Big((-1)^k q^{\binom{k}{2}}\Big)^{s-r+1} z^k,
\end{align*}
where it is assumed that the $b_i$ are in general position 
in $\mathbb{C}$.
We exclusively use terminating series for which convergence is not an issue. 

First we list three well-known summation formulas.
The terminating form of the $q$-binomial theorem 
\cite[Equation (II.4)]{GR04} is
\begin{equation}\label{qbt}
\qhyp{1}{0}{q^{-n}}{\text{--}}{q,zq^n}=
\sum_{k=0}^n (-z)^k q^{\binom{k}{2}} \qbin{n}{k}_q=(z;q)_n,
\end{equation}
where, here and in the following, $n$ is a nonnegative integer.
Our second summation is the terminating form of the 
${_1\phi_1}$ summation \cite[Equation (II.5)]{GR04}
\begin{equation}\label{Eqn: 1phi1}
\qhyp{1}{1}{q^{-n}}{c}{q,cq^n}=\frac{1}{(c;q)_n},
\end{equation}
and out third and final summation corresponds to the known evaluation
of the Rogers--Szeg\H{o} polynomial $H_n(z;q)$ at $z=-q$ \cite[Theorem 8.1]{berkovich2005positivity}
\begin{equation}\label{RS}
\sum_{k=0}^n (-q)^k \qbin{n}{k}_q=(q;q^2)_{\ceil{n/2}}.
\end{equation}
We further require three transformation formulas for basic hypergeometric series, all of which are limits of more general such formulas.
Taking the $b\to 0$ limits in 
\cite[Equation~(III.6)]{GR04} and \cite[Equation~(III.8)]{GR04} yields
\begin{subequations}
\begin{align}\label{III6}
\qhyp{2}{1}{0,q^{-n}}{c}{q,z}&=\frac{q^{\binom{n}{2}}}{(c;q)_n}
\Big({-}\frac{cz}{q}\Big)^n \,\qhyp{3}{2}{q^{-n},q/z,q^{1-n}/c}{0,0}{q,q} \\
&=\frac{(-c)^n q^{\binom{n}{2}}}{(c;q)_n}\,
\qhyp{2}{0}{q^{-n},q/z}{\text{--}}{q,\frac{z}{c}}.
\label{III8}
\end{align}
\end{subequations}
Equating the two expressions on the right and replacing $z\mapsto q/b$ followed by $c\mapsto q/bz$ results in
\begin{equation}\label{III7}
\qhyp{2}{0}{b,q^{-n}}{\text{--}}{q,z}=b^{-n}
\qhyp{3}{2}{b,bzq^{-n},q^{-n}}{0,0}{q,q}.
\end{equation}
This may also be obtained from \cite[Equation~(III.7)]{GR04} by replacing $z$ by $cz$ and then letting $c$ tend to infinity.

Finally, we need a multiple basic hypergeometric series of Srivastava--Daoust or Kamp\'e de F\'eri\'et type. For $\kappa$ a partition of length at most $r$ we define \cite{bytev2021q}
\begin{equation}\label{SD}
\mathcal{F}^{\kappa}(x_2,\dots,x_r)
:=\sum_{l_2=0}^{\kappa_2}\cdots\sum_{l_r=0}^{\kappa_r}
\prod_{i=1}^r
\frac{(q^{-\kappa_i};q)_{l_i+l_{i+1}}} {(q,-1,-q^{1-\kappa_{i-1}},q^{-\kappa_{i-1}};q)_{l_i}}\,x_i^{l_i},
\end{equation}
where $l_1=l_{r+1}:=0$.
Each $l_i$ (for $2\leq i\leq r$) is summed from $0$ to $\kappa_i$.
Since $\kappa_i\leq\kappa_{i-1}$, the denominator factor $(q^{-\kappa_{i-1}};q)_{l_i}$ does not cause any issues and $\mathcal{F}^{\kappa}$ is well defined.
As some special cases, we note that for $r=1$ we have
$\mathcal{F}^{(\kappa_1)}=1$ and for $r=2$,
\[
\mathcal{F}^{(\kappa_1,\kappa_2)}(x_2)={_3\phi_2}(0,0,q^{-\kappa_2};-1,-q^{1-\kappa_1};q,x_2).
\]
Furthermore, we have the stability property
\[
\mathcal{F}^{(\kappa_1,\dots,\kappa_{r-1},0)}(x_2,\dots,x_r)
=\mathcal{F}^{(\kappa_1,\dots,\kappa_{r-1})}(x_2,\dots,x_{r-1}),
\]
so that $\mathcal{F}^{\kappa}(x_2,\dots,x_r)$ depends only on the variables
$x_2,\dots,x_{l(\kappa)}$.

\section{Computing moments}\label{Sec: computing moments}

\subsection{Families of measures with the same moments}

We first rewrite $\msympinf$ in terms of Markov chain transition probabilities. 
For integers $a,b$ let
\begin{equation}\label{Kab}
K(a,b):=q^{\binom{b+1}{2}} (q;q^2)_{\ceil{\frac{a-b}{2}}} \qbin{a}{b}_q,
\end{equation}
so that $K(a,b)=0$ unless $0\leq b\leq a$.
Further set
\begin{equation}\label{Kinfb}
K(\infty,b):=\lim_{a\to\infty} K(a,b)
=(q;q^2)_{\infty}\,\frac{q^{\binom{b+1}{2}}}{(q;q)_b}.
\end{equation}
It was shown in \cite[Theorem 4.1]{fulman2019random} that
\begin{equation}\label{F2019}
\msympinf(\la)=K(\infty,\la_1') 
\prod_{i\geq 1} K(\la_i',\la_{i+1}').
\end{equation}

\begin{lemma}\label{Kabsum}
For a nonnegative integer $a$,
\[
\sum_{b=0}^a K(a,b)=1.
\]
\end{lemma}

We defer the proof to Section~\ref{Subsec: Technical Lemmas K(a,b)}.
The lemma implies that for a fixed nonnegative integer $a$, $K(a,b)$ gives a distribution on $\{0,1,\dots,a\}$. 
We can interpret \cite[Theorem 4.1]{fulman2019random} as saying that with respect to the distribution $\msympinf$, we have the conditional probability $\P(\la'_{i+1}=b\mid \la'_i=a)=K(a,b)$.
Since $q<1$, by the dominated convergence theorem, we immediately obtain
the following corollary. 

\begin{lemma}\label{Lem: sum K(infty,b) is 1}
We have
\[
\sum_{b=0}^{\infty} K(\infty,b)=1.
\]
\end{lemma}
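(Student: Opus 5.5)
The plan is to pass to the limit $a\to\infty$ inside the finite identity of Lemma~\ref{Kabsum}, using the dominated convergence theorem for sums over $b\in\Z_{\geq 0}$. For each fixed $b$ we already know from \eqref{Kinfb} that $K(a,b)\to K(\infty,b)$ as $a\to\infty$. Extending $K(a,b)=0$ for $b>a$, Lemma~\ref{Kabsum} reads $\sum_{b=0}^{\infty}K(a,b)=1$ for every $a\geq 0$. So it suffices to exhibit a summable majorant $g(b)$ with $0\leq K(a,b)\leq g(b)$ for all $a$ and $b$.

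To find this majorant, bound each factor of \eqref{Kab} separately for $0<q<1$. First, $0<(q;q^2)_{\ceil{(a-b)/2}}\leq 1$, since each factor $1-q^{2j-1}$ lies in $(0,1)$. Second, the Gaussian binomial satisfies
\[
\qbin{a}{b}_q=\frac{(q^{a-b+1};q)_b}{(q;q)_b}\leq \frac{1}{(q;q)_b}\leq \frac{1}{(q;q)_\infty}.
\]
Together these give
\[
0\leq K(a,b)\leq g(b):=\frac{q^{\binom{b+1}{2}}}{(q;q)_\infty},
\]
and $\sum_b g(b)<\infty$ because $q^{\binom{b+1}{2}}\leq q^{b}$ is dominated by a geometric series.

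With the majorant in hand, dominated convergence allows us to interchange the limit and the sum:
\[
\sum_{b\geq 0}K(\infty,b)=\lim_{a\to\infty}\sum_{b\geq 0}K(a,b)=1.
\]
No step is a real obstacle. The only point needing care is that the bounds are uniform in $a$, and both bounds above are.
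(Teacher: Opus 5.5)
Your proposal is correct and is the same argument the paper uses: it obtains this lemma from Lemma~\ref{Kabsum} by letting $a\to\infty$ and invoking dominated convergence. The only difference is that the paper just says ``since $q<1$'' where you write down the explicit uniform majorant $q^{\binom{b+1}{2}}/(q;q)_\infty$, and your bounds on each factor of $K(a,b)$ are valid.
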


Moreover, we can express $\msympinf(\la) \abs{\Sur_p(\la,\mu)}$ as a product over the function $K(a,b)$.

\begin{lemma}\label{Lem: PSym inf times sur product}
Given partitions $\mu\subseteq\la$,
\[
\msympinf(\la) \abs{\Sur_p(\la,\mu)}
= q^{-n(\mu)}
K(\infty,\la'_1-\mu'_1)
\prod_{i\geq 1} K(\la'_i-\mu'_{i+1},\la'_{i+1}-\mu'_{i+1}).
\]
\end{lemma}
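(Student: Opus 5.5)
The plan is to turn the left-hand side into principal specializations of Hall--Littlewood functions using Proposition~\ref{surjhall}, evaluate them with the explicit formulas of Section~\ref{Subsec: defn of sym polynomial}, and then match the result factor by factor against the product of $K$'s, using the product formula \eqref{F2019} for $\msympinf$.

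First I simplify $\abs{\Sur_p(\la,\mu)}$. Since $P_{\la/\mu}$ is homogeneous of degree $\abs{\la}-\abs{\mu}$ and $P_\la$ is homogeneous of degree $\abs{\la}$, I pull out powers of $q$:
\[
P_{\la/\mu}(q,q^2,\dots;q)=q^{\abs{\la}-\abs{\mu}}P_{\la/\mu}(1,q,\dots;q),
\qquad
P_\la(q,q^2,\dots;q)=q^{\abs{\la}}P_\la(1,q,\dots;q).
\]
Next I convert each specialization to a $Q$-function and evaluate it:
\begin{itemize}
\item by \eqref{Eqn: Skew Q to P}, $P_{\la/\mu}=(b_\mu/b_\la)\,Q_{\la/\mu}$;
\item by \eqref{QP} and \eqref{Eqn: Q la 1 q q2}, $P_\la(1,q,\dots;q)=q^{n(\la)}/b_\la$ and $Q_\mu(1,q,\dots;q)=q^{n(\mu)}$;
\item by \eqref{Eqn: Q la minus mu}, $Q_{\la/\mu}(1,q,\dots;q)=q^{n(\la/\mu)}\prod_i \qbin{\la'_i-\mu'_{i+1}}{\la'_i-\mu'_i}_q$.
\end{itemize}
Combining these gives
\[
\abs{\Sur_p(\la,\mu)}
=q^{-\abs{\mu}-n(\la)-n(\mu)+n(\la/\mu)}\,b_\mu(q)\prod_{i\geq1}\qbin{\la'_i-\mu'_{i+1}}{\la'_i-\mu'_i}_q .
\]
Writing $b_\mu=\prod_i(q;q)_{m_i(\mu)}$, this factor cancels the $(q;q)_{\mu'_i-\mu'_{i+1}}$ in each denominator. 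What remains is $\prod_i (q;q)_{\la'_i-\mu'_{i+1}}/(q;q)_{\la'_i-\mu'_i}$.

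Next I expand $\msympinf(\la)$ via \eqref{F2019} and \eqref{Kab}, \eqref{Kinfb}, and do the same with each factor on the right-hand side. On the right, the $i$-th factor has arguments $a=\la'_i-\mu'_{i+1}$ and $b=\la'_{i+1}-\mu'_{i+1}$, so $a-b=m_i(\la)$. Hence the $(q;q^2)_{\ceil{m_i(\la)/2}}$ factors and the $(q;q^2)_\infty$ coincide exactly with those coming from $\msympinf(\la)$. The $q$-factorials must then be compared.
\begin{itemize}
\item The right side produces $\frac{1}{(q;q)_{\la'_1-\mu'_1}}\prod_i \frac{(q;q)_{\la'_i-\mu'_{i+1}}}{(q;q)_{m_i(\la)}\,(q;q)_{\la'_{i+1}-\mu'_{i+1}}}$.
\item The left side produces $\frac{1}{(q;q)_{\la'_1}}\prod_i\frac{(q;q)_{\la'_i}}{(q;q)_{m_i(\la)}(q;q)_{\la'_{i+1}}}\cdot\prod_i \frac{(q;q)_{\la'_i-\mu'_{i+1}}}{(q;q)_{\la'_i-\mu'_i}}$.
\end{itemize}
On the left, the first product telescopes to $1/b_\la$, which cancels against the $1/(q;q)_{\la'_1}$ together with the remaining factorials. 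On the right, reindexing $i+1\to i$ in $(q;q)_{\la'_{i+1}-\mu'_{i+1}}$ and absorbing the $i=1$ term $(q;q)_{\la'_1-\mu'_1}$ gives $\prod_{i\ge1}(q;q)_{\la'_i-\mu'_i}$ in the denominator. So both sides reduce to $\prod_i (q;q)_{\la'_i-\mu'_{i+1}}/\big((q;q)_{m_i(\la)}(q;q)_{\la'_i-\mu'_i}\big)$. All products are finite, since all terms are trivial for large $i$.

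The step I expect to be the main obstacle is checking that the powers of $q$ agree. The left side carries the exponent
\[
n(\la)+\abs{\la}-\abs{\mu}-n(\la)-n(\mu)+n(\la/\mu)=\abs{\la}-\abs{\mu}+n(\la/\mu)-n(\mu),
\]
where $n(\la)+\abs{\la}$ is the exponent in $\msympinf$. The right side carries $-n(\mu)+\sum_{i\geq1}\binom{\la'_i-\mu'_i+1}{2}$, the binomials coming from $K(\infty,\cdot)$ and the $K(a,b)$'s after reindexing. Since $\binom{c+1}{2}=\binom{c}{2}+c$ and $\sum_i(\la'_i-\mu'_i)=\abs{\la}-\abs{\mu}$, this sum equals $n(\la/\mu)+\abs{\la}-\abs{\mu}$. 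The exponents therefore match and the identity follows. Throughout I would double-check the conventions of $n(\la/\mu)$ and of the boundary term $K(\infty,\la'_1-\mu'_1)$.
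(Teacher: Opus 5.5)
Your proposal is correct and follows essentially the paper's proof. It uses Proposition~\ref{surjhall}, homogeneity, \eqref{Eqn: Skew Q to P}, \eqref{Eqn: Q la minus mu} and \eqref{Eqn: Q la 1 q q2} to reach the same expression for $\abs{\Sur_p(\la,\mu)}$, then matches $q$-factorials and powers of $q$ against the factors $K(\cdot,\cdot)$.

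The one difference is in how you expand $\msympinf(\la)$. You use the Fulman--Kaplan factorization \eqref{F2019}. The paper instead uses only the closed form \eqref{symforminf} together with $\prod_{i\geq1}(q;q^2)_{\ceil{m_i(\la)/2}}(q^2;q^2)_{\floor{m_i(\la)/2}}=b_\la(q)$. As a result, the paper's argument at $\mu=0$ re-derives \eqref{F2019} rather than assuming it.

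A small wording fix on your left side: it is $\prod_i(q;q)_{\la'_i}/(q;q)_{\la'_{i+1}}$ that telescopes to $(q;q)_{\la'_1}$, leaving $1/b_\la(q)$. Your reduced expression is nonetheless right.
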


\begin{proof}
We begin by noting that by \eqref{QP}, \eqref{Eqn: Q la 1 q q2} and the homogeneity of the Hall--Littlewood symmetric functions, Proposition~\ref{surjhall} may be restated as
\begin{equation}\label{Sur1}
\abs{\Sur_p(\la,\mu)}
=q^{-n(\mu)-n(\la)-\abs{\mu}} b_{\la}(q) P_{\la/\mu}(1,q,q^2,\dots;q).
\end{equation}
Moreover, by \eqref{Eqn: Skew Q to P} and \eqref{Eqn: Q la minus mu},
\[
P_{\la/\mu}(1,q,q^2,\dots;q)=q^{n(\la/\mu)} \frac{b_{\mu}(q)}{b_{\la}(q)}
\prod_{i\geq 1} \qbin{\la'_i-\mu'_{i+1}}{\la'_i-\mu'_i}_q,
\]
and thus
\[
\abs{\Sur_p(\la,\mu)}
=q^{n(\la/\mu)-n(\la)-n(\mu)-\abs{\mu}} b_{\mu}(q)
\prod_{i\geq 1} \qbin{\la'_i-\mu'_{i+1}}{\la'_i-\mu'_i}_q.
\]
By the readily verified
\[
\prod_{i\geq 1} \qbin{\la'_i-\mu'_{i+1}}{\la'_i-\mu'_i}_q
=\frac{b_{\la}(q)}{b_{\mu}(q)}\,\frac{1}{(q;q)_{\la'_1-\mu'_1}}
\prod_{i\geq 1} \qbin{\la'_i-\mu'_{i+1}}{\la'_{i+1}-\mu'_{i+1}}_q,
\]
this leads to
\begin{equation}\label{Sur2}
\abs{\Sur_p(\la,\mu)}
=q^{n(\la/\mu)-n(\la)-n(\mu)-\abs{\mu}} \frac{b_{\la}(q)}{(q;q)_{\la'_1-\mu'_1}}
\prod_{i\geq 1} \qbin{\la'_i-\mu'_{i+1}}{\la'_{i+1}-\mu'_{i+1}}_q.
\end{equation}
Finally, by \eqref{Kab} and \eqref{Kinfb}, this is equal to
\[
\abs{\Sur_p(\la,\mu)}
=q^{-n(\la)-n(\mu)-\abs{\la}} b_{\la}(q)\,
\frac{K(\infty,\la'_1-\mu'_1)}{(q;q^2)_{\infty}}
\prod_{i\geq 1} \frac{K(\la'_i-\mu'_{i+1},\la'_{i+1}-\mu'_{i+1})}{(q;q^2)_{\ceil{\frac{m_i(\la)}{2}}}}.
\]
Combined with the definition of $\msympinf$ given in \eqref{symforminf} and the simple relation
\[
\prod_{i\geq 1} (q;q^2)_{\ceil{\frac{m_i(\la)}{2}}} (q^2;q^2)_{\floor{m_i(\la)/2}}=\prod_{i\geq 1} (q;q)_{m_i(\la)}=b_{\la}(q),
\]
this implies the claim.
\end{proof}

\begin{lemma}\label{Lem: Sum K(a,b) q^-kb}
For nonnegative integers $a$ and $k$,
\begin{equation}\label{Kabsumk}
\sum_{b=0}^a K(a,b)q^{-kb}
=q^{-\binom{k}{2}}(-1;q)_k
\sum_{l=0}^k \frac{(q^{-k};q)_l}{(q,-1;q)_l}\,q^{(a+1)l} .
\end{equation}
\end{lemma}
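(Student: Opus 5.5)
Our plan is to reduce the left-hand side to a $q$-hypergeometric sum and then transform it, using the formulas collected in Section~3.2. By \eqref{Kab},
\[
\sum_{b=0}^a K(a,b)q^{-kb}=\sum_{b=0}^a q^{\binom{b+1}{2}-kb}(q;q^2)_{\ceil{(a-b)/2}}\qbin{a}{b}_q .
\]
The factor $(q;q^2)_{\ceil{(a-b)/2}}$ is awkward. We remove it using the Rogers--Szeg\H{o} evaluation \eqref{RS} with $n=a-b$:
\[
(q;q^2)_{\ceil{(a-b)/2}}=\sum_{j}(-q)^j\qbin{a-b}{j}_q .
\]
Substituting and using $\qbin{a}{b}_q\qbin{a-b}{j}_q=\qbin{a}{b+j}_q\qbin{b+j}{b}_q$, we set $m=b+j$. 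This produces a double sum
\[
\sum_{m=0}^a\qbin{a}{m}_q(-q)^m\sum_{b=0}^m\qbin{m}{b}_q(-1)^b q^{\binom{b}{2}}q^{(1-k)b}\cdot(\text{sign/power bookkeeping}).
\]
The inner sum is now a terminating $q$-binomial sum \eqref{qbt}, which evaluates to $(q^{1-k};q)_m$ up to the exact exponent. Since $(q^{1-k};q)_m$ vanishes for $m\ge k$ when $k\ge1$, the outer sum truncates to $m\le k-1$.

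It remains to convert the resulting sum $\sum_m \qbin{a}{m}_q(-q)^m(q^{1-k};q)_m$ into the right-hand side, which is a sum over $l\le k$ of terms depending on $a$ only through $q^{(a+1)l}$. We would write the left sum as a ${}_2\phi_1$ or ${}_2\phi_0$ in the variable $q^{-a}$, with $\qbin{a}{m}_q$ contributing $(q^{-a};q)_m$. We would then apply the transformations \eqref{III6}--\eqref{III8}, or the combined form \eqref{III7}, to exchange the roles of $a$ and $k$. This should give a terminating series in $q^{-k}$ with argument proportional to $q^{a+1}$.

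The denominator $(-1;q)_l$ and the prefactor $q^{-\binom{k}{2}}(-1;q)_k$ suggest a different route. We could instead expand the right-hand side by writing $q^{(a+1)l}$ via the $q$-binomial theorem, or compare both sides as polynomials in $x=q^{a+1}$ of degree $\le k$. Under this route, it suffices to check equality at $a=0,1,\dots,k$, or to verify a common recurrence in $a$.

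A clean alternative is induction on $a$ using the Pascal-type recurrence for $K(a,b)$ in $a$ (from $\qbin{a}{b}$ and the parity structure of $\ceil{(a-b)/2}$). Alternatively, we could prove the identity by induction on $k$ using a contiguous relation. The base case $k=0$ is Lemma~\ref{Kabsum}.

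The main obstacle is the exact identification of parameters in the transformation step. In particular, the factor $(-1;q)_l$ must come out, presumably via $(q;q^2)$-type products arising from \eqref{RS} combining with $(q;q)$ into $(q^2;q^2)$ and $(-q;q)$. We would first try the polynomial-in-$x=q^{a+1}$ comparison, evaluating both sides at small $a$ with the ${}_1\phi_1$ summation \eqref{Eqn: 1phi1}, and fall back to the \eqref{III7} transformation if that gets messy.
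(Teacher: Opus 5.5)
Your strategy matches the paper's: expand $(q;q^2)_{\ceil{(a-b)/2}}$ via \eqref{RS}, resum with \eqref{qbt}, then exchange the roles of $a$ and $k$ with \eqref{III8}. Your regrouping $m=b+j$ is a small shortcut. The paper pairs $\qbin{a}{b}_q\qbin{a-b}{j}_q=\qbin{a}{j}_q\qbin{a-j}{b}_q$, obtains $(-qt;q)_{a-j}$, and needs one application of \eqref{III8} just to reach the ${}_2\phi_0$ of Proposition~\ref{Prop: sum K(a,b) t^b}. Your pairing instead gives $\sum_b K(a,b)t^b=\sum_m\qbin{a}{m}_q(-q)^m(t;q)_m$, which is that ${}_2\phi_0$ directly.

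However, the bookkeeping you left vague is wrong as stated, and the error matters. Since $(-q)^{m-b}=(-q)^m(-1)^bq^{-b}$ and $\binom{b+1}{2}-b=\binom{b}{2}$, the inner sum is
\[
\sum_b\qbin{m}{b}_q(-1)^bq^{\binom{b}{2}}q^{-kb}=(q^{-k};q)_m .
\]
It is not $(q^{1-k};q)_m$, so the outer sum truncates at $m\le k$, not $m\le k-1$. With your version the left side would have $k$ terms against $k+1$ on the right of \eqref{Kabsumk}, and at $k=0$ it would contradict Lemma~\ref{Kabsum}; for $k=1$ it gives $1$ instead of the correct $2-q^a$.

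With the correct factor, the step you call the main obstacle is one line. Because
\[
\qbin{a}{m}_q(-q)^m=\frac{(q^{-a};q)_m}{(q;q)_m}(-1)^mq^{-\binom{m}{2}}(-q^{a+1})^m,
\]
the left side equals ${}_2\phi_0(q^{-k},q^{-a};\text{--};q,-q^{a+1})$. Now take \eqref{III8} with $(n,c,z)=(k,-1,q^{a+1})$, so that $q/z=q^{-a}$, $z/c=-q^{a+1}$ and $(-c)^k=1$. It reads
\[
{}_2\phi_1(0,q^{-k};-1;q,q^{a+1})=\frac{q^{\binom{k}{2}}}{(-1;q)_k}\,{}_2\phi_0(q^{-k},q^{-a};\text{--};q,-q^{a+1}),
\]
and solving for the ${}_2\phi_0$ gives exactly the right-hand side of \eqref{Kabsumk}. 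Your fallback routes are unnecessary. In particular, checking the polynomial identity in $q^{a+1}$ at $a=0,\dots,k$ is no easier than proving the identity itself.
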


We defer the proof to Section~\ref{Subsec: Technical Lemmas K(a,b)}.
As before, by the dominated convergence theorem we obtain the following result in the $a\to\infty$ limit.

\begin{cor}
For $k$ a nonnegative integer,
\begin{equation}\label{Kinfbsumk}
\sum_{b=0}^{\infty} K(\infty,b)q^{-kb}=q^{-\binom{k}{2}}(-1;q)_k.
\end{equation}
\end{cor}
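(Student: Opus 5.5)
The plan is to derive the corollary from Lemma~\ref{Lem: Sum K(a,b) q^-kb} by letting $a\to\infty$ on both sides of \eqref{Kabsumk}, and to justify exchanging the limit with the infinite sum in $b$ by dominated convergence.

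On the right-hand side of \eqref{Kabsumk} the sum over $l$ is finite. Every term with $l\geq 1$ carries a factor $q^{(a+1)l}$, which tends to $0$ because $0<q<1$. Only the $l=0$ term survives, and it equals $1$. The right-hand side therefore tends to $q^{-\binom{k}{2}}(-1;q)_k$.

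On the left-hand side, write the sum as $\sum_{b\geq 0}K(a,b)q^{-kb}$, using $K(a,b)=0$ for $b>a$. For each fixed $b$, the term $K(a,b)$ converges to $K(\infty,b)$ by \eqref{Kinfb}. To get a dominating function, bound the factors of $K(a,b)$ separately:
\[
(q;q^2)_{\ceil{(a-b)/2}}\leq 1
\quad\text{and}\quad
\qbin{a}{b}_q\leq \frac{1}{(q;q)_b}\leq \frac{1}{(q;q)_\infty}.
\]
This gives $K(a,b)q^{-kb}\leq q^{\binom{b+1}{2}-kb}/(q;q)_\infty$ for all $a$. The bound is independent of $a$ and summable in $b$, since $\binom{b+1}{2}-kb$ grows quadratically in $b$. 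Dominated convergence then shows the left-hand side tends to $\sum_{b}K(\infty,b)q^{-kb}$, and equating the two limits gives \eqref{Kinfbsumk}.

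I do not expect any real obstacle: the only nontrivial input is Lemma~\ref{Lem: Sum K(a,b) q^-kb}, and the limit interchange is handled by the explicit uniform bound above.
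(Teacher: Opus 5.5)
Your proposal is correct and follows the paper's route exactly: the paper obtains the corollary from the finite-$a$ identity \eqref{Kabsumk} by letting $a\to\infty$ and invoking dominated convergence, without writing out a dominating function. Your explicit majorant $K(a,b)q^{-kb}\leq q^{\binom{b+1}{2}-kb}/(q;q)_\infty$ supplies exactly the justification the paper leaves implicit.
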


We are now ready to prove the next theorem.

\begin{thm}\label{Thm: moments of sym k1 dots km not normalized}
Let $\mu,\kappa$ be partitions and $r:=l(\kappa)$.
Then
\[
M_{p,\mu}(\mktilda)
=q^{-\sum_{i\geq 1} \binom{\mu'_i+\kappa_i}{2}}
\bigg(\prod_{i=1}^r (-1;q)_{\kappa_i}\bigg)
\mathcal{F}^{\kappa}\big(q^{m_1(\mu)+1},\dots,q^{m_{r-1}(\mu)+1}\big).
\]
\end{thm}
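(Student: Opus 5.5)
Substituting Lemma~\ref{Lem: PSym inf times sur product} into the definition of the moment, I write $M_{p,\mu}(\mktilda)$ as a Markov-chain-type sum over the differences $b_i:=\la'_i-\mu'_i$ ($i\geq 1$), with $\la$ ranging over partitions containing $\mu$. Since $\la'_i=b_i+\mu'_i$, the weight $p^{\sum_i\kappa_i\la'_i}$ factors as $q^{-\sum_i\kappa_i\mu'_i}\prod_{i=1}^r q^{-\kappa_i b_i}$. This gives
\[
M_{p,\mu}(\mktilda)=q^{-n(\mu)-\sum_i\kappa_i\mu'_i}\sum_{b_1,b_2,\dots}K(\infty,b_1)q^{-\kappa_1b_1}\prod_{i\geq1}K\big(b_i+m_i(\mu),b_{i+1}\big)q^{-\kappa_{i+1}b_{i+1}},
\]
using $\la'_i-\mu'_{i+1}=b_i+m_i(\mu)$. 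The constraint that $\la$ be a partition is automatically encoded by $K(a,b)=0$ unless $0\leq b\leq a$. The prefactor $q^{-n(\mu)-\sum\kappa_i\mu'_i}$ combines with the $q^{-\binom{\kappa_i}{2}}$ factors to be produced below into $q^{-\sum_i\binom{\mu'_i+\kappa_i}{2}}$, since $\binom{\mu'_i+\kappa_i}{2}=\binom{\mu'_i}{2}+\kappa_i\mu'_i+\binom{\kappa_i}{2}$.

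I then sum from the innermost index outward. For $i>r$ we have $\kappa_i=0$, and Lemma~\ref{Kabsum} sums the tail $b_{r+1},b_{r+2},\dots$ to $1$. For $b_r$, Lemma~\ref{Lem: Sum K(a,b) q^-kb} with $a=b_{r-1}+m_{r-1}(\mu)$ and $k=\kappa_r$ gives
\[
q^{-\binom{\kappa_r}{2}}(-1;q)_{\kappa_r}\sum_{l_r}\frac{(q^{-\kappa_r};q)_{l_r}}{(q,-1;q)_{l_r}}q^{(m_{r-1}(\mu)+1)l_r}\,q^{l_rb_{r-1}}.
\]
The factor $q^{l_rb_{r-1}}$ shifts the exponent at the next level from $-\kappa_{r-1}b_{r-1}$ to $-(\kappa_{r-1}-l_r)b_{r-1}$. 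Applying Lemma~\ref{Lem: Sum K(a,b) q^-kb} again, now with $k=\kappa_{r-1}-l_r$, yields a factor $q^{-\binom{\kappa_{r-1}-l_r}{2}}(-1;q)_{\kappa_{r-1}-l_r}$ and a new sum over $l_{r-1}$ containing $(q^{l_r-\kappa_{r-1}};q)_{l_{r-1}}$. Iterating down to $b_1$, the corollary \eqref{Kinfbsumk} is used with $k=\kappa_1-l_2$, and no $l_1$ sum appears, consistent with $l_1=0$.

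The main obstacle is the bookkeeping that converts the shifted quantities into the normalized form \eqref{SD}. Concretely I need
\begin{align*}
(-1;q)_{\kappa-l}&=(-1;q)_\kappa\,\frac{q^{\binom{l}{2}-\kappa l+l}}{(-q^{1-\kappa};q)_l}\cdot(\text{sign}),\\
(q^{l-\kappa};q)_{j}&=\frac{(q^{-\kappa};q)_{j+l}}{(q^{-\kappa};q)_l},\\
q^{-\binom{\kappa-l}{2}}&=q^{-\binom{\kappa}{2}+\kappa l-\binom{l+1}{2}}.
\end{align*}
These should produce exactly the factors $(q^{-\kappa_i};q)_{l_i+l_{i+1}}/(-q^{1-\kappa_{i-1}},q^{-\kappa_{i-1}};q)_{l_i}$. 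The powers of $q$ and the signs must cancel completely, leaving $x_i=q^{m_{i-1}(\mu)+1}$ attached to $l_i$. Under the paper's indexing these are the arguments $q^{m_1(\mu)+1},\dots,q^{m_{r-1}(\mu)+1}$. I would verify this cancellation first for $r=2$, where the result must reduce to the stated ${}_3\phi_2$, and then argue by induction on the level.

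Finally, I would justify exchanging the order of summation by positivity and absolute convergence. All terms in the $b$-sums are nonnegative, and Lemma~\ref{Lem: Sum K(a,b) q^-kb} and the corollary \eqref{Kinfbsumk} give finite values at each level.
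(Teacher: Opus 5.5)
Your proposal is correct and is essentially the paper's argument: after substituting $\la'_i=b_i+\mu'_i$ and removing the tail with Lemma~\ref{Kabsum}, you sum $b_r,b_{r-1},\dots$ in turn via Lemma~\ref{Lem: Sum K(a,b) q^-kb} with the shifted exponent $k=\kappa_{i-1}-l_i\geq 0$, and your three identities combine to $q^{-\binom{\kappa-l}{2}}(-1;q)_{\kappa-l}=q^{-\binom{\kappa}{2}}(-1;q)_{\kappa}/(-q^{1-\kappa};q)_l$, so the undetermined sign is simply $+1$ and the normalized form of \eqref{SD} follows. The only cosmetic difference is that the paper truncates $K(\infty,\cdot)$ to $K(N-\mu'_1,\cdot)$, carries an $l_1$-sum, and lets $N\to\infty$ to force $l_1=0$, whereas you apply \eqref{Kinfbsumk} directly at the first level; both routes are valid.
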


\begin{proof}
Recall that $K(a,b)$ is defined in \eqref{Kab} for all integers $a,b$ and vanishes unless $0\leq b\leq a$.
By \eqref{mumoment}, \eqref{mktilda} and Lemma~\ref{Lem: PSym inf times sur product},
\[
M_{p,\mu}(\mktilda)=\sum_{\la} 
q^{-n(\mu)-\sum_{i=1}^r \kappa_i\la'_i} 
K\big(\infty,\la'_1-\mu'_1\big)
\prod_{i\geq 1} K\big(\la'_i-\mu'_{i+1},\la'_{i+1}-\mu'_{i+1}\big).
\]
Since we have seen that \eqref{Kinfbsumk} follows from \eqref{Kabsumk} by taking the $a\to\infty$ limit and appealing to the dominated convergence theorem, we may in the following first evaluate
\[
A_{N;\mu,\kappa}:=\sum_{\la} 
q^{-n(\mu)-\sum_{i=1}^r \kappa_i\la'_i} 
K\big(N-\mu'_1,\la'_1-\mu'_1\big)
\prod_{i\geq 1} K\big(\la'_i-\mu'_{i+1},\la'_{i+1}-\mu'_{i+1}\big),
\]
for $N$ a nonnegative integer such that $N\geq l(\mu)$, and then take the
$N\to\infty$ limit to obtain an expression for $M_{p,\mu}(\mktilda)$.
Replacing $\la'_i\mapsto \nu_i+\mu'_i$ in the definition of $A_{N;\mu,\kappa}$ and using that $\mu'_i-\mu'_{i+1}=m_i(\mu)$ yields
\begin{align*}
A_{N;\mu,\kappa}&=
\sum_{\nu} q^{-n(\mu)-\sum_{i=1}^r \kappa_i(\nu_i+\mu'_i)} 
\prod_{i\geq 1} K\big(\nu_{i-1}+m_{i-1}(\mu),\nu_i\big) \\
&= q^{-n(\mu)-\sum_{i=1}^r \kappa_i\mu'_i}
\sum_{\nu}  
\prod_{i=1}^r q^{-\kappa_i \nu_i} 
K\big(\nu_{i-1}+m_{i-1}(\mu),\nu_i\big) 
\prod_{i\geq r} K\big(\nu_i+m_i(\mu),\nu_{i+1}\big),
\end{align*}
where $\nu_0+m_0(\mu):=N-\mu'_1$.
By Lemma~\ref{Kabsum} this simplifies to
\[
A_{N;\mu,\kappa}=q^{-n(\mu)-\sum_{i=1}^r \kappa_i\mu'_i} 
\sum_{\nu_1,\dots,\nu_r\geq 0} \prod_{i=1}^r q^{-\kappa_i\nu_i} 
K\big(\nu_{i-1}+m_{i-1}(\mu),\nu_i\big). 
\]
Next we claim that for $1\leq s\leq r+1$ and $\nu_0,\dots,\nu_{s-1},m_0,\dots,m_{r-1}\in\mathbb{Z}_{\geq 0}$,
\begin{align}\label{induction}
&\sum_{\nu_s,\dots,\nu_r\geq 0} 
\prod_{i=1}^r q^{-\kappa_i\nu_i} K\big(\nu_{i-1}+m_{i-1},\nu_i\big)  \\
&\quad= \prod_{i=1}^{s-1} q^{-\kappa_i\nu_i} K\big(\nu_{i-1}+m_{i-1},\nu_i\big) 
\notag \\
&\qquad\times
\sum_{l_s=0}^{\kappa_s}\cdots\sum_{l_r=0}^{\kappa_r} q^{l_s\nu_{s-1}}
\prod_{i=s}^r \frac{q^{-\binom{\kappa_i}{2}+(m_{i-1}+1)l_i} (-1;q)_{\kappa_i} (q^{-\kappa_i};q)_{l_i+l_{i+1}}}
{(q,-1;q)_{l_i}(-q^{1-\kappa_i},q^{-\kappa_i};q)_{l_{i+1}}}, \notag
\end{align}
where $l_{r+1}:=0$.
Clearly this is true for $s=r+1$.
Now assume \eqref{induction} is true for some $s>1$.
Then, summing both sides of \eqref{induction} over $\nu_{s-1}$ yields
\begin{align*}
&\sum_{\nu_{s-1},\dots,\nu_r\geq 0} 
\prod_{i=1}^r q^{-\kappa_i\nu_i} K\big(\nu_{i-1}+m_{i-1},\nu_i\big)  \\
&\quad= \prod_{i=1}^{s-2} q^{-\kappa_i\nu_i} K\big(\nu_{i-1}+m_{i-1},\nu_i\big)  \\
&\qquad\times
\sum_{l_s=0}^{\kappa_s}\cdots\sum_{l_r=0}^{\kappa_r} \bigg(
\sum_{\nu_{s-1}\geq 0} q^{-(\kappa_{s-1}-l_s)\nu_{s-1}} 
K\big(\nu_{s-2}+m_{s-2},\nu_{s-1}\big) \\
&\qquad\qquad\qquad\qquad\times
\prod_{i=s}^r \frac{q^{-\binom{\kappa_i}{2}+(m_{i-1}+1)l_i} (-1;q)_{\kappa_i} (q^{-\kappa_i};q)_{l_i+l_{i+1}}}
{(q,-1;q)_{l_i}(-q^{1-\kappa_i},q^{-\kappa_i};q)_{l_{i+1}}}\bigg).
\end{align*}
Since $l_s$ is summed from $0$ to $\kappa_s$ and since $\kappa$ is a partition, we have $\kappa_{s-1}-l_s\geq 0$. Hence we may apply 
Lemma~\ref{Lem: Sum K(a,b) q^-kb} for $k=\kappa_{s-1}-l_s$ to rewrite the sum over $\nu_{s-1}$ as
\begin{align*}
&q^{-\binom{\kappa_{s-1}-l_s}{2}}(-1;q)_{\kappa_{s-1}-l_s}
\sum_{l_{s-1}=0}^{\kappa_{s-1}-l_s} \frac{(q^{-(\kappa_{s-1}-l_s)};q)_{l_{s-1}}}{(q,-1;q)_{l_{s-1}}}\,q^{(\nu_{s-2}+m_{s-2}+1)l_{s-1}} \\
&\quad=
\sum_{l_{s-1}=0}^{\kappa_{s-1}} 
\frac{q^{-\binom{\kappa_{s-1}}{2}+(\nu_{s-2}+m_{s-2}+1)l_{s-1}}
(-1;q)_{\kappa_{s-1}}(q^{-\kappa_{s-1}};q)_{l_{s-1}+l_s}}
{(q,-1;q)_{l_{s-1}}(-q^{1-\kappa_{s-1}},q^{-\kappa_{s-1}};q)_{l_s}},
\end{align*}
where in the expression on the right we have changed the upper bound in the sum over $l_{s-1}$ from $\kappa_{s-1}-l_s$ to $\kappa_{s-1}$ using the fact that the summand vanishes unless $l_{s-1}+l_s\leq\kappa_{s-1}$.
Substituting the above into the preceding equation results in
\begin{align*}
&\sum_{\nu_{s-1},\dots,\nu_r\geq 0} 
\prod_{i=1}^r q^{-\kappa_i\nu_i} K\big(\nu_{i-1}+m_{i-1},\nu_i\big)  \\
&\qquad= \prod_{i=1}^{s-2} q^{-\kappa_i\nu_i} K\big(\nu_{i-1}+m_{i-1},\nu_i\big) \\
&\qquad\quad\times
\sum_{l_{s-1}=0}^{\kappa_{s-1}}\cdots\sum_{l_r=0}^{\kappa_r} 
q^{l_{s-1}\nu_{s-2}}
\prod_{i=s-1}^r \frac{q^{-\binom{\kappa_i}{2}+(m_{i-1}+1)l_i} (-1;q)_{\kappa_i} (q^{-\kappa_i};q)_{l_i+l_{i+1}}}
{(q,-1;q)_{l_i}(-q^{1-\kappa_i},q^{-\kappa_i};q)_{l_{i+1}}}.
\end{align*}
Since this is \eqref{induction} with $s$ replaced by $s-1$, this proves \eqref{induction} for all $1\leq s\leq r+1$. 
Taking $s=1$ and $m_i=m_i(\mu)$, this implies that
\begin{align*}
A_{N;\mu,\kappa}=q^{-\sum_{i\geq 1}\binom{\mu'_i+\kappa_i}{2}} 
\sum_{l_1=0}^{\kappa_1}\cdots\sum_{l_r=0}^{\kappa_r} q^{l_1\nu_0}
\prod_{i=1}^r \frac{q^{(m_{i-1}(\mu)+1)l_i} (-1;q)_{\kappa_i} (q^{-\kappa_i};q)_{l_i+l_{i+1}}}
{(q,-1;q)_{l_i}(-q^{1-\kappa_i},q^{-\kappa_i};q)_{l_{i+1}}},
\end{align*}
where $\nu_0+m_0(\mu)=N-\mu'_1$ as before. 
In the large $N$ limit, the summand vanishes unless $l_1=0$, leading to
\begin{align*}
M_{p,\mu}(\mktilda)&=\lim_{N\to\infty} A_{N;\mu,\kappa} \\
&=q^{-\sum_{i\geq 1}\binom{\mu'_i+\kappa_i}{2}} 
\sum_{l_2=0}^{\kappa_2}\cdots\sum_{l_r=0}^{\kappa_r} 
\prod_{i=1}^r \frac{q^{(m_{i-1}(\mu)+1)l_i} (-1;q)_{\kappa_i} (q^{-\kappa_i};q)_{l_i+l_{i+1}}}
{(q,-1;q)_{l_i}(-q^{1-\kappa_i},q^{-\kappa_i};q)_{l_{i+1}}} \\
&=q^{-\sum_{i\geq 1}\binom{\mu'_i+\kappa_i}{2}} 
\sum_{l_2=0}^{\kappa_2}\cdots\sum_{l_r=0}^{\kappa_r} 
\prod_{i=1}^r \frac{q^{(m_{i-1}(\mu)+1)l_i} (-1;q)_{\kappa_i} (q^{-\kappa_i};q)_{l_i+l_{i+1}}}
{(q,-1,-q^{1-\kappa_{i-1}},q^{-\kappa_{i-1}};q)_{l_i}},
\end{align*}
where $l_1=l_{r+1}:=0$.
Recalling definition \eqref{SD} completes the proof.
\end{proof}

\begin{cor}\label{Cor: bound on moments}
Let $\kappa$ be a partition of length $r$ and $\mu$ an arbitrary partition.
Then there exists a constant $C=C_{\kappa}$ independent of $\mu$ such that
\[
M_{p,\mu}\big(\mktilda\big) \leq C p^{n(\mu)+\abs{\kappa}l(\mu)}.
\]
\end{cor}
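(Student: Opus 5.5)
We read the bound off the exact formula of Theorem~\ref{Thm: moments of sym k1 dots km not normalized}. It expresses $M_{p,\mu}(\mktilda)$ as a product of three factors, and we bound each one separately.

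\emph{The power of $q$.} Expanding the binomial,
\[
\binom{\mu'_i+\kappa_i}{2}=\binom{\mu'_i}{2}+\kappa_i\mu'_i+\binom{\kappa_i}{2}.
\]
Summing over $i$ and using $n(\mu)=\sum_{i}\binom{\mu'_i}{2}$ gives
\[
q^{-\sum_{i\geq1}\binom{\mu'_i+\kappa_i}{2}}
=p^{n(\mu)}\,p^{\sum_i\kappa_i\mu'_i}\,p^{n(\kappa')}.
\]
Since $\mu'_i\leq\mu'_1=l(\mu)$ for every $i$, we get $\sum_i\kappa_i\mu'_i\leq\abs{\kappa}\,l(\mu)$. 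The factor $p^{n(\kappa')}$ depends only on $\kappa$, and it goes into the constant.

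\emph{The product.} The factor $\prod_{i=1}^r(-1;q)_{\kappa_i}$ is also a constant depending only on $\kappa$ (and $p$).

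\emph{The series.} The remaining task is to bound $\mathcal{F}^{\kappa}(x_2,\dots,x_r)$ uniformly over the arguments $x_i=q^{m_{i-1}(\mu)+1}$. We avoid any cancellation argument. By \eqref{SD}, $\mathcal{F}^{\kappa}$ is a finite sum with at most $\prod_{i\geq2}(\kappa_i+1)$ terms. Each summand is a fixed rational number in $q$, depending only on $\kappa$ and the indices $l_i$, multiplied by the monomial $\prod_i x_i^{l_i}$. For this multiplication to make sense we need the denominators to be nonzero. The factors $(q;q)_{l_i}$, $(-1;q)_{l_i}$ and $(-q^{1-\kappa_{i-1}};q)_{l_i}$ are nonzero because $0<q<1$. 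The factor $(q^{-\kappa_{i-1}};q)_{l_i}$ is nonzero because $l_i\leq\kappa_i\leq\kappa_{i-1}$. Since $0<x_i\leq q<1$, each monomial $\prod_i x_i^{l_i}$ is at most $1$ in absolute value. Hence
\[
\abs{\mathcal{F}^{\kappa}}\leq C'_{\kappa}:=\sum_{l_2,\dots,l_r}\prod_i\biggl|\frac{(q^{-\kappa_i};q)_{l_i+l_{i+1}}}{(q,-1,-q^{1-\kappa_{i-1}},q^{-\kappa_{i-1}};q)_{l_i}}\biggr|,
\]
and this bound is independent of $\mu$.

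\emph{Conclusion.} Combining the three estimates gives the claim with
\[
C=p^{n(\kappa')}\prod_i(-1;q)_{\kappa_i}\,C'_{\kappa}.
\]
The moment is nonnegative since $\mktilda$ is a nonnegative measure, so the bound on the absolute value suffices. The only step needing care is the uniformity of the bound on $\mathcal{F}^{\kappa}$ in $\mu$. It holds because the number of terms and all coefficients depend only on $\kappa$, while the arguments $x_i$ stay in $(0,1)$.
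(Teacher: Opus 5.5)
Your proof is correct and follows the paper's argument. You read the bound off Theorem~\ref{Thm: moments of sym k1 dots km not normalized}, bound $\abs{\mathcal{F}^{\kappa}}$ uniformly in $\mu$ by the sum of the absolute values of its coefficients (using $0<x_i\leq q<1$), and split $\sum_i\binom{\mu'_i+\kappa_i}{2}=n(\mu)+n(\kappa')+\sum_i\kappa_i\mu'_i$. Your step $\sum_i\kappa_i\mu'_i\leq \mu'_1\abs{\kappa}=\abs{\kappa}\,l(\mu)$ is also the right justification: the paper's intermediate expression $\sum_{i,j\geq 1}\mu'_i\kappa_j$ actually equals $\abs{\mu}\,\abs{\kappa}$, not $\abs{\kappa}\,l(\mu)$.
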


This corollary proves the first claim of Theorem~\ref{Thm: Same moments S odd size}.

\begin{proof}
Since $p>1$ (and thus $q=1/p<1$),
\begin{align*}
\left|\mathcal{F}^{\kappa}\big(q^{m_1(\mu)+1},\dots,q^{m_{r-1}(\mu)+1}\big)\right|
&=\left|
\sum_{l_2=0}^{\kappa_2}\cdots\sum_{l_r=0}^{\kappa_r}
\prod_{i=1}^r
\frac{(q^{-\kappa_i};q)_{l_i+l_{i+1}}} {(q,-1,-q^{1-\kappa_{i-1}},q^{-\kappa_{i-1}};q)_{l_i}}\,q^{(m_{i-1}(\mu)+1)l_i}
\right|\\
&\leq 
\sum_{l_2=0}^{\kappa_2}\cdots\sum_{l_r=0}^{\kappa_r}
\bigg\vert
\prod_{i=1}^r
\frac{(q^{-\kappa_i};q)_{l_i+l_{i+1}}} {(q,-1,-q^{1-\kappa_{i-1}},q^{-\kappa_{i-1}};q)_{l_i}}\bigg\vert,
\end{align*}
which is independent of $\mu$.
Furthermore,
\begin{align*}
\sum_{i\geq 1} \binom{\mu'_i+\kappa_i}{2}&=
n(\mu)+n(\kappa')+\sum_{i\geq 1} \mu_i'\kappa_i \\
&\leq n(\mu)+n(\kappa')+\sum_{i,j\geq 1} \mu'_i\kappa_j
=n(\mu)+n(\kappa')+\abs{\kappa} l(\mu)
\end{align*}
and thus 
\[
p^{\sum_{i\geq 1} \binom{\mu'_i+\kappa_i}{2}}\leq p^{n(\mu)+n(\kappa')+\abs{\kappa} l(\mu)}.
\]
The result now follows from Theorem~\ref{Thm: moments of sym k1 dots km not normalized}.
\end{proof}

Next, we will show that for $\kappa$ a strict partition of length $r$ and $S\in\rodd$, the measures $\mkStilda$ have the same moments as $\mktilda$.
For this, we need to evaluate sums of the form $\sum_b (-1)^b K(a,b) q^{-kb}$.

\begin{lemma}\label{minusonesum}
For a nonnegative integer $a$ and a positive integer $k$,
\[
\sum_{b=0}^a (-1)^b K(a,b) q^{-kb}
=(-1)^a q^{a+1-\binom{k+1}{2}} (-q^2;q)_{k-1} 
\sum_{j=0}^{k-1} \frac{(-1)^j q^{(a-k+2)j+\binom{j}{2}}}{(-q^2;q)_j}
\qbin{k-1}{j}_q.
\]
\end{lemma}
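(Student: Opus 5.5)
We need to evaluate $\Sigma:=\sum_{b=0}^a (-1)^b K(a,b)q^{-kb}$ with $K(a,b)=q^{\binom{b+1}{2}}(q;q^2)_{\lceil (a-b)/2\rceil}\qbin{a}{b}_q$. The plan is to remove the awkward factor $(q;q^2)_{\lceil (a-b)/2\rceil}$ by means of the Rogers--Szeg\H{o} evaluation \eqref{RS}:
\[
(q;q^2)_{\lceil (a-b)/2\rceil}=\sum_{c=0}^{a-b}(-q)^c\qbin{a-b}{c}_q .
\]
Substituting and using $\qbin{a}{b}_q\qbin{a-b}{c}_q=\qbin{a}{b+c}_q\qbin{b+c}{b}_q$, we set $n=b+c$ and obtain
\[
\Sigma=\sum_{n=0}^a \qbin{a}{n}_q(-q)^n\sum_{b=0}^n (-1)^b(-1)^{b} q^{\binom{b+1}{2}-kb-b}\qbin{n}{b}_q
=\sum_{n=0}^a \qbin{a}{n}_q(-q)^n\sum_{b=0}^n q^{\binom{b}{2}-kb}\qbin{n}{b}_q .
\]
By the $q$-binomial theorem \eqref{qbt} with $z=-q^{-k}$, the inner sum equals $(-q^{-k};q)_n$, so
\[
\Sigma=\sum_{n=0}^a (-q)^n(-q^{-k};q)_n\qbin{a}{n}_q .
\]
This is the same manipulation I would expect in the proof of Lemma~\ref{Lem: Sum K(a,b) q^-kb}, except that the sign $(-1)^b$ now turns $(q^{-k};q)_n$ into $(-q^{-k};q)_n$. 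That change is the source of the $(-1)^a$ and the $(-q^2;q)$ factors in the statement.

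Next, we write $(-q^{-k};q)_n$ in a form that produces a finite sum of length $k$. The natural choice is to split off the first $k+1$ factors. Since $(-q^{-k};q)_{k+1}=2(-q;q)_k\,q^{-\binom{k+1}{2}}$, for $n\ge k+1$ we have
\[
(-q^{-k};q)_n=q^{-\binom{k+1}{2}}(-1;q)_{k+1}(-q;q)_{n-k-1}.
\]
A cleaner route, however, is to treat the sum as a $q$-hypergeometric series in $n$. Since $\qbin{a}{n}_q=(q^{-a};q)_n(-1)^nq^{an-\binom n2}/(q;q)_n$, we get
\[
\Sigma=\sum_{n}\frac{(q^{-a},-q^{-k};q)_n}{(q;q)_n}q^{(a+1)n-\binom n2},
\]
which is a ${}_2\phi_0$-type series $\qhyp{2}{0}{q^{-a},-q^{-k}}{\text{--}}{q,\,-q^{a+1}}$. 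I would then apply the transformation \eqref{III8}/\eqref{III7}, taking $b=-q^{-k}$. This converts it to a series terminating after $k$ terms. Concretely, \eqref{III7} gives
\[
(-q^{-k})^{-a}\,{}_3\phi_2(-q^{-k},\,q^{1-k},\,q^{-a};0,0;q,q),
\]
since $bz q^{-a}=q^{1-k}$. The factor $(q^{1-k};q)_j$ truncates the sum at $j\le k-1$, and the prefactor $(-1)^aq^{ka}$ produces the sign $(-1)^a$.

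The remaining work, which I expect to be the main obstacle, is to match this $k$-term sum with the stated right-hand side. The stated sum has denominators $(-q^2;q)_j$ and a power $q^{(a-k+2)j}$, whereas the ${}_3\phi_2$ has $(q^{-a};q)_j$, which still depends on $a$. So a second transformation is needed, probably \eqref{III6}/\eqref{III8} read in the reverse direction, which trades the $a$-dependent Pochhammer for a power $q^{aj}$. If the resulting series instead carries $(-q^{1-k};q)_j$-type factors, I would reverse the order of summation $j\mapsto k-1-j$. That reversal converts $(-q^{-k};q)$-type products into $(-q^2;q)_{k-1}/(-q^2;q)_{j}$ and yields the overall constant $q^{a+1-\binom{k+1}{2}}(-q^2;q)_{k-1}$.

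As a check on the constants, I would verify the final identity directly for $k=1$, where the stated right-hand side is $(-1)^aq^{a}$, and also at $a=0$.
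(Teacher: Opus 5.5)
Your route is the paper's: \eqref{RS}, the $q$-binomial theorem to reach ${}_2\phi_0(-q^{-k},q^{-a};\text{--};q,-q^{a+1})$, then \eqref{III7}, and finally \eqref{III6}. Your reindexing by $n=b+c$ is a small streamlining, since it lands on the ${}_2\phi_0$ directly, whereas the paper passes through a ${}_2\phi_1$ and \eqref{III8}.

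The step you leave open (your "main obstacle") closes exactly as you guessed. Apply \eqref{III6} right to left with $(n,c,z)=(k-1,-q^2,q^{a+1})$. Then $q^{-n}=q^{1-k}$, $q/z=q^{-a}$ and $q^{1-n}/c=-q^{-k}$, so your ${}_3\phi_2$ equals $(-q^2;q)_{k-1}\,q^{-\binom{k-1}{2}-(a+2)(k-1)}\,{}_2\phi_1(0,q^{1-k};-q^2;q,q^{a+1})$, and no reversal of summation is needed. Your prefactor $(-1)^aq^{ka}$ from \eqref{III7} is correct. Combined with the factor above it gives $(-1)^aq^{a+1-\binom{k+1}{2}}(-q^2;q)_{k-1}$, because $ka-\binom{k-1}{2}-(a+2)(k-1)=a+1-\binom{k+1}{2}$. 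Finally, $(q^{1-k};q)_j/(q;q)_j=(-1)^jq^{\binom{j}{2}-(k-1)j}\qbin{k-1}{j}_q$ turns the ${}_2\phi_1$ into the stated sum. (The paper's display right after \eqref{III7} already writes the final prefactor $q^{a+1-\binom{k+1}{2}}$. At that stage $q^{ka}$, as you have it, is the correct one; for example, $k=2$, $a=0$ distinguishes them.)
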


We defer the proof to Section~\ref{Subsec: Technical Lemmas K(a,b)}.
Once again the dominated convergence theorem implies the $a\to\infty$ limiting case.

\begin{cor}\label{Cor: Alt sum K(infty,b) q^-kb}
For $k$ a positive integer,
\begin{equation}\label{zero}
\sum_{b=0}^{\infty}(-1)^b K(\infty,b)q^{-kb}=0.
\end{equation}
\end{cor}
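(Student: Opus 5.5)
We deduce Corollary~\ref{Cor: Alt sum K(infty,b) q^-kb} from Lemma~\ref{minusonesum} by letting $a\to\infty$, exactly as \eqref{Kinfbsumk} was deduced from Lemma~\ref{Lem: Sum K(a,b) q^-kb}. Two things need checking. First, the left-hand side of Lemma~\ref{minusonesum} converges termwise to the left-hand side of \eqref{zero}, and dominated convergence applies. Second, the right-hand side of Lemma~\ref{minusonesum} tends to $0$ as $a\to\infty$.

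For the left-hand side, fix $k\geq 1$. For each fixed $b$ we have $K(a,b)\to K(\infty,b)$ as $a\to\infty$, by \eqref{Kinfb}. For domination, use the bound
\[
\abs{K(a,b)q^{-kb}}\leq q^{\binom{b+1}{2}-kb}\,\frac{1}{(q;q)_\infty},
\]
which holds because $0<(q;q^2)_{\lceil (a-b)/2\rceil}\leq 1$ and
\[
\qbin{a}{b}_q\leq \frac{1}{(q;q)_b}\leq \frac{1}{(q;q)_\infty}.
\]
Since $\binom{b+1}{2}-kb\to\infty$ quadratically in $b$, the dominating series $\sum_b q^{\binom{b+1}{2}-kb}$ converges. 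Dominated convergence (for series) therefore gives
\[
\lim_{a\to\infty}\sum_{b=0}^a (-1)^bK(a,b)q^{-kb}=\sum_{b\geq 0}(-1)^bK(\infty,b)q^{-kb}.
\]

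For the right-hand side, the prefactor $(-q^2;q)_{k-1}q^{-\binom{k+1}{2}}$ is independent of $a$. The remaining sum over $j$ is finite, running over $0\leq j\leq k-1$, and its $j$-th term is bounded in absolute value by
\[
q^{(a-k+2)j+\binom{j}{2}}\,\qbin{k-1}{j}_q\Big/(-q^2;q)_j.
\]
Once $a\geq k-2$ the exponent $(a-k+2)j$ is nonnegative, so each term is bounded uniformly in $a$. Hence the whole sum is $O(1)$, and the factor $q^{a+1}\to0$ forces the right-hand side to $0$. Actually only the $j=0$ term survives the limit, and it is still multiplied by $q^{a+1}$, so the conclusion is immediate.

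The only point needing a little care is the domination bound for the alternating series, and the crude estimate above handles it. Combining the two limits gives \eqref{zero}.
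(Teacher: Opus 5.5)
Your proposal is correct and matches the paper's proof, which simply takes $a\to\infty$ in Lemma~\ref{minusonesum} and appeals to dominated convergence. You additionally supply the details the paper leaves implicit: the domination bound $\abs{K(a,b)q^{-kb}}\leq q^{\binom{b+1}{2}-kb}/(q;q)_\infty$, and the observation that the right-hand side carries the vanishing factor $q^{a+1}$ multiplying a sum that stays bounded as $a\to\infty$.
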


We now have the tools to show that $\mkStilda$ for $\kappa$ and $S$ as above has the same moments as $\mktilda$, thus proving the second part of Theorem~\ref{Thm: Same moments S odd size}.  
Throughout the rest of the paper, we use $\chi(\cdot)$ to denote an indicator function where $\chi(\text{true}) = 1$ and $\chi(\text{false}) = 0$.

\begin{thm}\label{Thm: Same moments S}
Let $\kappa,\mu$ be partitions such that $\kappa$ is strict.
Set $r:=l(\kappa)$. 
Then, for $S\in\rodd$,
\[
M_{p,\mu}\big(\mkStilda\big)=M_{p,\mu}\big(\mktilda\big).
\]
\end{thm}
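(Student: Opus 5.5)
The plan is to run the proof of Theorem~\ref{Thm: moments of sym k1 dots km not normalized} again, this time carrying the sign through the nested sums. We show that the signed sum
\[
\sum_{\la}(-1)^{\sum_{i\in S}\la'_i}\,\mktilda(\la)\,\abs{\Sur_p(\la,\mu)}
\]
vanishes. Since $\mkStilda=\mktilda+(\text{this signed measure})$, that gives the claim.

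First, apply Lemma~\ref{Lem: PSym inf times sur product} and substitute $\la'_i=\nu_i+\mu'_i$, exactly as in that proof. The sign becomes $(-1)^{\sum_{i\in S}\mu'_i}\,(-1)^{\sum_{i\in S}\nu_i}$, and the first factor is a constant. The factors $K(\nu_i+m_i(\mu),\nu_{i+1})$ with $i\geq r$ carry no weight and no sign, so Lemma~\ref{Kabsum} removes them as before. We are left with
\[
\sum_{\nu_1,\dots,\nu_r\geq0}\ \prod_{i=1}^r (-1)^{\epsilon_i\nu_i}\,q^{-\kappa_i\nu_i}\,K\big(\nu_{i-1}+m_{i-1},\nu_i\big),
\qquad \epsilon_i:=\chi(i\in S),
\]
with $\nu_0+m_0=N-\mu'_1$ and $N\to\infty$. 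Equivalently, the first factor is $K(\infty,\nu_1)$.

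Next, sum out $\nu_r,\nu_{r-1},\dots,\nu_2$ in turn, proving by downward induction on $s$ the following structural claim:
\[
\sum_{\nu_s,\dots,\nu_r}(\cdots)\ \text{is a finite linear combination of}\ (-1)^{\delta_s a}\,q^{c a},
\qquad a:=\nu_{s-1}+m_{s-1},
\]
where $\delta_s\equiv\sum_{i\geq s}\epsilon_i \pmod 2$, and $0\leq c\leq\kappa_s$. Moreover, $c\geq1$ whenever $\delta_s=1$.

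For the inductive step, the sum over $\nu_{s-1}=b$ has the form $\sum_b (-1)^{\delta b}K(a',b)\,q^{-kb}$. Here $\delta=\epsilon_{s-1}+\delta_s \pmod 2$, and $k=\kappa_{s-1}-c$, which is at least $\kappa_{s-1}-\kappa_s\geq1$ by strictness. (The factors $(-1)^{\delta_s m_{s-1}}q^{c\,m_{s-1}}$ coming from the shift by $m_{s-1}$ are constants.)
\begin{itemize}
\item If $\delta=0$, Lemma~\ref{Lem: Sum K(a,b) q^-kb} produces terms $q^{l a'}$ with $0\leq l\leq k\leq\kappa_{s-1}$.
\item If $\delta=1$, Lemma~\ref{minusonesum} (which applies because $k\geq1$) produces terms $(-1)^{a'}q^{(j+1)a'}$ with $1\leq j+1\leq k\leq\kappa_{s-1}$.
\end{itemize}
This is exactly the claim at level $s-1$, so the induction closes.

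Finally, consider the outermost variable $\nu_1$, summed against $K(\infty,\nu_1)$. Its total parity is $\delta_1=\abs{S}\equiv1 \pmod 2$, and its exponent is $k=\kappa_1-c\geq\kappa_1-\kappa_2\geq1$, again by strictness. So every term has the form $\sum_b(-1)^bK(\infty,b)q^{-kb}$ with $k\geq1$, and each such sum vanishes by Corollary~\ref{Cor: Alt sum K(infty,b) q^-kb}. To be careful, do this at finite $N$ with Lemma~\ref{minusonesum}: every term carries a factor $q^{(N-\mu'_1)(\text{positive})}$, so it tends to $0$, with dominated convergence exactly as before.

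The main obstacle is the bookkeeping in the inductive claim. One must check that the exponent of $q^{-b}$ stays at least $1$ at every signed step, since that is where strictness of $\kappa$ enters and why Lemma~\ref{minusonesum} needs $k$ positive. One must also check that the parities propagate so the outermost variable ends up with odd total sign. Both reduce to the bound $c\leq\kappa_s$ and to $\abs{S}$ being odd, which is why the statement requires $\kappa$ strict and $S\in\rodd$.
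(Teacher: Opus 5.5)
Your proof is correct and is essentially the paper's argument: the paper also reduces to the vanishing of the signed sum $B_{\kappa,\mu,S}$ and sums out the innermost variable with Lemma~\ref{Lem: Sum K(a,b) q^-kb} or Lemma~\ref{minusonesum}. The only difference is packaging: the paper runs an induction on $r$, absorbing the resulting $q^{c\nu_r}$ and sign into a new strict partition $\tau^{(j)}$ and odd set $T$, whereas you run a downward induction on $s$ with an explicit invariant, and in both versions strictness gives $k\geq 1$ at each signed step while $\abs{S}$ odd makes the outermost sum the alternating one killed by Corollary~\ref{Cor: Alt sum K(infty,b) q^-kb}.
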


\begin{proof}
From definition \eqref{signed} it follows that it suffices to prove that for every partition $\mu$, we have
\[
\sum_{\la} (-1)^{\sum_{i\in S}\la_i'} \mktilda(\la)  \abs{\Sur_p(\lambda, \mu)}
=0,
\]
provided $S\in\rodd$.
By \eqref{mktilda} and Lemma~\ref{Lem: PSym inf times sur product}, we see that the left-hand side is equal to
\[
\sum_{\la} (-1)^{\sum_{i\in S}\la_i'} 
q^{-n(\mu)-\sum_{i=1}^r \kappa_i\la_i'} 
K\big(\infty,\la'_1-\mu'_1\big)
\prod_{i\geq 1} K\big(\la'_i-\mu'_{i+1},\la'_{i+1}-\mu'_{i+1}\big).
\]
By substituting $\nu_i=\la_i'-\mu_i'$ for $i\geq 1$ and defining $\nu_0+m_0(\mu)=\infty$, this simplifies to
\begin{align*}
&
\sum_{\nu} 
(-1)^{\sum_{i\in S}(\nu_i+\mu_i')} 
q^{-n(\mu)-\sum_{i=1}^r \kappa_i(\nu_i+\mu'_i)} 
\prod_{i\geq 1} K\big(\nu_{i-1}+m_{i-1}(\mu),\nu_i\big) \\
&= (-1)^{\sum_{i\in S}\mu_i'}
q^{-n(\mu)-\sum_{i=1}^r \kappa_i\mu'_i}
\sum_{\nu}  
\prod_{i=1}^r (-1)^{\chi(i\in S)\nu_i} q^{-\kappa_i \nu_i} 
K\big(\nu_{i-1}+m_{i-1}(\mu),\nu_i\big) 
\prod_{i\geq r} K\big(\nu_i+m_i(\mu),\nu_{i+1}\big).
\end{align*}
By Lemma~\ref{Kabsum}, this can be simplified as
\[
(-1)^{\sum_{i\in S}\mu_i'}
q^{-n(\mu)-\sum_{i=1}^r \kappa_i\mu'_i}
\sum_{\nu_1,\dots,\nu_r\geq 0}  
\prod_{i=1}^r (-1)^{\chi(i\in S)\nu_i} q^{-\kappa_i \nu_i} 
K\big(\nu_{i-1}+m_{i-1}(\mu),\nu_i\big).
\]
Let us denote
\[
B_{\kappa,\mu,S}
=\sum_{\nu_1,\dots,\nu_r\geq 0}  
\prod_{i=1}^r (-1)^{\chi(i\in S)\nu_i} q^{-\kappa_i \nu_i} 
K\big(\nu_{i-1}+m_{i-1}(\mu),\nu_i\big).
\]
Therefore, it is enough to prove that $B_{\kappa,\mu,S}=0$ whenever $\kappa$ is a strict partition of length $r$ and $S\in\rodd$.
We prove this by induction on $r$.

The base case is $r=1$. Since the set $S$ has odd size, the only possibility is $S=\{1\}$. Therefore, the base case is that for $\kappa$ a strict partition of length $1$,
\[
B_{\kappa,\mu,S}
=\sum_{\nu_1=0}^{\infty} K(\infty,\nu_1) (-1)^{\nu_1} q^{-\kappa_1\nu_1}
=0.
\]
This follows from Corollary~\ref{Cor: Alt sum K(infty,b) q^-kb}.

Next, assume that the claim is true for some $r$. We will prove it for $r+1$. Let $\kappa$ be a strict partition of length $r+1$ and $S\subseteq[r+1]$ be a subset of odd size.
We divide the proof into two cases.
\begin{enumerate}
\item Case 1: $r+1\in S$. 
Denote
\[
T=
\begin{cases}
(S\cup\{r\})\setminus\{r+1\} &\text{if }r\notin S\\
S\setminus\{r,r+1\} &\text{if }r\in S.
\end{cases}
\]
Note in particular that $T\subseteq [r]$ is a set of odd cardinality. 
For $0\leq j\leq \kappa_{r+1}-1$, let $\tau^{(j)}$ be the partition $(\kappa_1,\dots,\kappa_{r-1},\kappa_{r}-1-j)$. Since $\kappa$ is a strict partition of length $r+1$ and $j+1\leq \kappa_{r+1}$, $\tau^{(j)}$ is a strict partition of length $r$.

By applying Lemma~\ref{minusonesum} with $a=\nu_r+m_r(\mu)$ and $k=\kappa_{r+1}$, we obtain
\[
\sum_{\nu_{r+1}=0}^{\nu_r+m_r(\mu)} 
(-1)^{\nu_{r+1}} 
K(\nu_r+m_r(\mu),\nu_{r+1}) 
q^{-\kappa_{r+1}\nu_{r+1}}
=\sum_{j=0}^{\kappa_{r+1}-1}
(-1)^{\nu_r}
q^{(j+1)\nu_r} \varphi_{j;m_{r}(\mu),\kappa_{r+1}},
\]
where
\[
\varphi_{j;m,k}
=(-1)^{m+j} q^{m+1-\binom{k+1}{2} +(m-k+2)j+\binom{j}{2} } 
\frac{ (-q^2;q)_{k-1}}{(-q^2;q)_j}
\qbin{k-1}{j}_q.
\]
Therefore,
\[
B_{\kappa,\mu,S}
=
\sum_{j=0}^{\kappa_{r+1}-1}
\varphi_{j;m_{r}(\mu),\kappa_{r+1}} B_{\tau^{(j)},\mu,T}.
\]
By the induction hypothesis, $B_{\tau^{(j)},\mu,T}=0$ for all $\tau^{(j)}$. We conclude that $B_{\kappa,\mu,S}=0$.

\item Case 2: $r+1\notin S$. So $S\subseteq [r]$ is a set of odd cardinality. 
For $0\leq j\leq \kappa_{r+1}$, let $\tau^{(j)}$ be the partition $(\kappa_1,\dots,\kappa_{r-1},\kappa_{r}-j)$. Since $\kappa$ is a strict partition of length $r+1$, $\tau^{(j)}$ is a strict partition of length $r$.

By applying Lemma~\ref{Lem: Sum K(a,b) q^-kb} with $a=\nu_r+m_r(\mu)$ and $k=\kappa_{r+1}$, we obtain
\[
\sum_{\nu_{r+1}=0}^{\nu_r+m_r(\mu)} K(\nu_r+m_r(\mu),\nu_{r+1})q^{-\kappa_{r+1}\nu_{r+1}}
= \sum_{j=0}^{\kappa_{r+1}} q^{\nu_r j} \psi_{j;m_r(\mu), \kappa_{r+1}},
\]
where
\[
\psi_{j;m,k}
=q^{-\binom{k}{2}+(m+1)j}
(-1;q)_k
\frac{(q^{-k};q)_j}{(q,-1;q)_j}.
\]
Therefore,
\[
B_{\kappa,\mu,S}
=
\sum_{j=0}^{\kappa_{r+1}}
\psi_{j;m_{r}(\mu),\kappa_{r+1}} B_{\tau^{(j)},\mu,S}.
\]
By the induction hypothesis, $B_{\tau^{(j)},\mu,S}=0$ for all $\tau^{(j)}$, so that $B_{\kappa,\mu,S}=0$.
\end{enumerate}
Since we have completed the induction step for $r+1\in S$ and $r+1\notin S$, this completes the proof.
\end{proof}

To conclude this section we prove Theorem~\ref{Thm: Same moments alpha family}. In this proof we apply Corollary~\ref{Cor: H is injectiev}, which we will prove in Section~\ref{sec:shapeA_r}.

\begin{proof}[Proof of Theorem~\ref{Thm: Same moments alpha family}]
First we show that for $\ba \in \A_r$, we have $\mkAtilda(\la) \ge 0$ for all $\la$. 
By \eqref{signed} and \eqref{mkAtilda},
\[
\mkAtilda(\la)
=\bigg(1+\sum_{S\in\rodd} (-1)^{\sum_{i\in S}\la'_i} \,\ba(S) \bigg) \mktilda(\la).
\]
Since
\[
(-1)^{\sum_{i\in S}\la'_i}=(-1)^{\sum_{i\in S}\chi(\la'_i \text{ odd})}=
(-1)^{\abs{S}+\sum_{i\in S}\chi(\la'_i \text{ even})}=
-(-1)^{\sum_{i\in S}\chi(\la'_i \text{ even})},
\]
this implies
\[
\mkAtilda(\la)
=\bigg(1-\sum_{S\in\rodd} (-1)^{\abs{S\cap T}} \,\ba(S) \bigg) \mktilda(\la),
\]
where $T:=\{1\leq i\leq r:~\text{ $\la_i'$ is even}\}$.
By the definition of $\A_r$, the right-hand side is nonnegative.
Next, to obtain \eqref{noalphadependence}, we proceed as follows:
\begin{align*}
M_{p,\mu}\big(\mkAtilda\big)
&= \sum_{S\in \rodd} \ba(S) M_{p,\mu}\big(\mkStilda\big)
+\Big(1-\sum_{S\in \rodd} \ba(S)\Big) M_{p,\mu}\big(\mktilda\big) \\
&=M_{p,\mu}(\mktilda),
\end{align*}
where the second equality follows from Theorem~\ref{Thm: Same moments S}.

Finally, suppose that $\mkAtilda=\mkBtilda$ for $\ba,\bb\in \A_r$.
Since $\mktilda(\la)>0$ for every partition $\la$, it follows that for every subset $T\subseteq [r]$,
\[
\sum_{S\in\rodd} (-1)^{\abs{S\cap T}} \ba(S)
=\sum_{S\in\rodd} (-1)^{\abs{S\cap T}} \bb(S).
\]
By Corollary~\ref{Cor: H is injectiev}, this implies $\ba=\bb$.
\end{proof}

\subsection{Normalized measures}

The measures $\mktilda$ and $\mkAtilda$ have not been normalized to sum to $1$ over all partitions.  In this section, we define the appropriate normalization constants to turn them into distributions.  We then compute the moments of these normalized distributions, which allows us to deduce Theorem~\ref{Thm_r=1_one-par-family}.  We also highlight the case where $l(\kappa) = 2$.

For a partition $\kappa$, we define the distribution on partitions given by
\[
\mkNormal
:=q^{n(\kappa')}\bigg(\prod_{i\geq 1} 
\frac{1}{(-1;q)_{\kappa_i}}\bigg)
\frac{\mktilda}{\mathcal{F}^{\kappa}(q,\dots,q)}.
\]
Analogously, for $\kappa$ strict of length $r$,
\[
\mkANormal
:= q^{n(\kappa')}\bigg(\prod_{i=1}^r 
\frac{1}{(-1;q)_{\kappa_i}}\bigg)
\frac{\mkAtilda}{\mathcal{F}^{\kappa}(q,\dots,q)},
\]
where $\ba\in\A_r$.

It follows from Theorem~\ref{Thm: moments of sym k1 dots km not normalized} and Theorem~\ref{Thm: Same moments alpha family} that for every $\ba\in\A_r$,
\[
M_{p,\mu}(\mkANormal)=M_{p,\mu}(\mkNormal)
=q^{-n(\mu)-\sum_{i\geq 1}\mu'_i\kappa_i}
\frac{\mathcal{F}^{\kappa}(q^{m_1(\mu)+1},\dots,q^{m_{r-1}(\mu)+1})}{\mathcal{F}^{\kappa}(q,\dots,q)}.
\]
For $\mu=0$ this implies that
\[
\sum_{\la} \mkANormal(\la)=\sum_{\la} \mkNormal(\la)=1,
\]
so that $\mkNormal$ and $\mkANormal$ are distributions.
In particular, for $r=1$, we have $\mathcal{F}^{(k)}=1$ so that
\[
\spmk=\frac{q^{\binom{k}{2}}}{(-1;q)_k} \,\tilde{P}^{\Sym,(k)}_p
\]
and
\[
M_{p,\mu}\big(\qpmk\big)
=M_{p,\mu}\big(\spmk\big)
=q^{-n(\mu)-k l(\mu)}=p^{n(\mu)+k l(\mu)},
\]
establishing Theorem~\ref{Thm_r=1_one-par-family}.
Similarly, for $r=2$, we obtain
\[
P^{\Sym,\kappa}_p
=\frac{q^{n(\kappa')} \tilde{P}^{\Sym,\kappa}_p}
{(-1;q)_{\kappa_1}(-1;q)_{\kappa_2} 
\,{_3\phi_2}(0,0,q^{-\kappa_2};-1,-q^{1-\kappa_1};q,q)}\,
\]
and
\[
M_{p,\mu}\big(P^{\Sym,\kappa}_{p,\ba}\big)=
M_{p,\mu}\big(P^{\Sym,\kappa}_p\big)
=q^{-n(\mu)-\kappa_1\mu_1'-\kappa_2\mu_2'}\,
\frac{{_3\phi_2}(0,0,q^{-\kappa_2};-1,-q^{1-\kappa_1};q,q^{\mu'_1-\mu'_2+1})}
{{_3\phi_2}(0,0,q^{-\kappa_2};-1,-q^{1-\kappa_1};q,q)},
\]
where $\ba\in\A_2$.

\section{Moments of other measures}\label{Sec: Moments of other measures}

\subsection{Moments of the standard Cohen--Lenstra distribution}\label{newold}

Let $d$ be a nonnegative integer and $u$ a real number such that $0<u<p$.
In this section, we compute the $\mu$-moments of the distribution $P_{d,u}$, providing a new proof of Theorem~\ref{thm53Fulman}.

We start by defining transition weights that factorize the distribution $P_{d,u}$. 
For $a,b$ integers and $w>0$, define
\begin{equation}\label{Eqn: def of L(a,b)}
L_w(a,b):=w^b q^{b^2} \frac{(wq;q)_a}{(wq;q)_b}\qbin{a}{b}_q.
\end{equation}
We note that $L_w(a,b)$ vanishes unless $0\leq b\leq a$.
Similarly, define
\[
L_w(\infty,b):=\lim_{a\to\infty} L_w(a,b)=w^b q^{b^2}
\frac{(wq;q)_{\infty}}{(q,wq;q)_b}.
\]

In analogy with Lemma~\ref{Kabsum}, we show that 
$\sum_b L_w(a,b)=1$.

\begin{lemma}\label{LemLsum1}
For $a$ a nonnegative integer,
\[
\sum_{b=0}^a L_w(a,b)=1.
\]
\end{lemma}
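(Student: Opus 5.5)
We need to prove that $\sum_{b=0}^a L_w(a,b)=1$, where
\[
L_w(a,b)=w^b q^{b^2}\frac{(wq;q)_a}{(wq;q)_b}\qbin{a}{b}_q .
\]

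\textbf{Reduction.} Divide by $(wq;q)_a$. The claim then becomes the identity
\[
\sum_{b=0}^a \frac{w^b q^{b^2}}{(wq;q)_b}\qbin{a}{b}_q=\frac{1}{(wq;q)_a}.
\]
This has exactly the shape of the terminating ${}_1\phi_1$ summation \eqref{Eqn: 1phi1} with $c=wq$ and $n=a$. The plan is to rewrite the summand in basic hypergeometric form and match it term by term.

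\textbf{Matching the $q^{-n}$ factor.} We use the standard identity
\[
\frac{(q^{-a};q)_b}{(q;q)_b}=(-1)^b q^{\binom{b}{2}-ab}\qbin{a}{b}_q ,
\]
which follows by reversing the factors of $(q^{-a};q)_b$. It also gives the right vanishing for $b>a$.

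\textbf{Computing the ${}_1\phi_1$ summand.} Since $s-r+1=1$, the $b$-th term of ${}_1\phi_1(q^{-a};wq;q,wq^{1+a})$ is
\[
\frac{(q^{-a};q)_b}{(q,wq;q)_b}(-1)^b q^{\binom{b}{2}}\,(wq^{1+a})^b .
\]
Substituting the identity above, the sign $(-1)^b$ appears twice and cancels. The powers of $q$ collect to
\[
q^{2\binom{b}{2}-ab+b+ab}=q^{b^2-b+b}=q^{b^2}.
\]
The remaining factors are $w^b$, $\qbin{a}{b}_q$ and $1/(wq;q)_b$. So the summand is precisely $w^bq^{b^2}\qbin{a}{b}_q/(wq;q)_b$.

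\textbf{Conclusion.} By \eqref{Eqn: 1phi1} the sum equals $1/(wq;q)_a$. Multiplying back by $(wq;q)_a$ gives $\sum_{b=0}^a L_w(a,b)=1$.

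The only care needed is the exponent bookkeeping in the second step. No convergence issues arise, since the series terminates. The denominators $(wq;q)_b$ are nonzero because $w>0$ (in the intended range $wq<1$), so the identity holds as stated.
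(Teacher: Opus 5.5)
Your proof is correct and follows the paper's argument: the paper likewise writes $\sum_{b}L_w(a,b)=(wq;q)_a\,{}_1\phi_1(q^{-a};wq;q,wq^{a+1})$ and applies the terminating ${}_1\phi_1$ summation \eqref{Eqn: 1phi1} with $c=wq$, $n=a$. Your exponent bookkeeping, which the paper leaves implicit, checks out.
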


\begin{proof}
Using basic hypergeometric notation,
\[
\sum_{b=0}^a L_w(a,b)=(wq;q)_a\:\qhyp{1}{1}{q^{-a}}{wq}{q,wq^{a+1}}=1,
\]
where the last equality follows from \eqref{Eqn: 1phi1}.
\end{proof}

\begin{cor}
We have
\[
\sum_{b=0}^{\infty} L_w(\infty,b)=1.
\]
\end{cor}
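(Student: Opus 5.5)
The corollary is the $a\to\infty$ limit of Lemma~\ref{LemLsum1}, handled exactly as Lemma~\ref{Lem: sum K(infty,b) is 1} was obtained from Lemma~\ref{Kabsum}. For each fixed $b$ we have $L_w(a,b)\to L_w(\infty,b)$ as $a\to\infty$, since $(wq;q)_a\to(wq;q)_\infty$ and $\qbin{a}{b}_q\to 1/(q;q)_b$. So the only real task is to justify exchanging the limit with the sum over $b$ in
\[
1=\sum_{b\geq 0}L_w(a,b),
\]
where we set $L_w(a,b)=0$ for $b>a$.

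To do this I will produce a summable dominating sequence independent of $a$. Throughout, $w>0$ and $0<q<1$.

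First, $0<(wq;q)_a\leq 1$ whenever $wq<1$. More generally, $(wq;q)_a$ is bounded in absolute value by $(-wq;q)_\infty$.

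Second, $1/(wq;q)_b$ is bounded uniformly in $b$ by $1/\prod_i\abs{1-wq^i}$, provided no factor $1-wq^i$ vanishes. This holds when $w$ avoids the points $q^{-i}$, which the hypothesis $0<u<p$ guarantees in the intended application $w=u$. For $w<p$ every factor is positive.

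Third, $\qbin{a}{b}_q\leq 1/(q;q)_\infty$.

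Combining these, $\abs{L_w(a,b)}\leq C_w\, w^b q^{b^2}$ with $C_w$ independent of $a$. Since $q^{b^2}$ decays superexponentially, $\sum_b w^bq^{b^2}<\infty$. Dominated convergence then gives
\[
\sum_b L_w(\infty,b)=\lim_{a\to\infty}\sum_b L_w(a,b)=1.
\]

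The only point needing care is the uniform bound on $1/(wq;q)_b$, that is, ensuring no denominator vanishes or degenerates. I would simply restrict to the parameter range used in the paper ($0<w<p$, so $wq<1$ and all factors are in $(0,1]$). An alternative is to sum the $q$-binomial series directly via Euler's identity; either route is routine.
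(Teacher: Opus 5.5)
Your argument is correct and matches the paper's proof, which says only that the corollary follows from Lemma~\ref{LemLsum1} and dominated convergence as $a\to\infty$; you additionally make the dominating sequence $C_w w^b q^{b^2}$ explicit. One caveat: the bound $1/\abs{(wq;q)_b}\leq 1/\prod_{i\geq 1}\abs{1-wq^i}$ can fail once $wq>2$ (for instance at $b=0$ when $w$ is large), so that step really does rely on your restriction to $0<w<p$, which covers all uses of $L_w$ in the paper; instead, writing $(wq;q)_a/(wq;q)_b=(wq^{b+1};q)_{a-b}$ and bounding it by $(-wq;q)_\infty$ gives a domination valid for every $w>0$ and avoids vanishing denominators altogether.
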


\begin{proof}
This follows from Lemma~\ref{LemLsum1} and an appeal to the dominated convergence theorem.
\end{proof}

Next, we record the transition factorization of the distribution $P_{d,u}$. 

\begin{prop}\label{proposition}
For $\la$ a partition,
\[
P_{d,u}(\la)=L_u(d,\la_1') \prod_{i\geq 1} L_u(\la_i',\la_{i+1}').
\]
Therefore, with respect to the distribution $P_{d,u}$ on partitions,
\[
\P(\la'_{i+1}=b\mid \la'_i=a)=L_u(a,b)\quad
\text{and}\text\quad
\P(\la_1'=b)=L_u(d,b).
\]
\end{prop}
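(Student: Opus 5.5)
We need to show $P_{d,u}(\la)=L_u(d,\la_1')\prod_{i\geq1}L_u(\la_i',\la_{i+1}')$ by direct comparison with the explicit formula
\[
P_{d,u}(\la)=\frac{u^{\abs{\la}}q^{2n(\la)+\abs{\la}}}{b_{\la}(q)}\,(q;q)_{l(\la)}(uq;q)_d\qbin{d}{l(\la)}_q.
\]
Set $\la_0':=d$; the product is then $\prod_{i\geq 0}L_u(\la_i',\la_{i+1}')$. It is finite, since $L_u(0,0)=1$. Both sides vanish when $l(\la)>d$, because $L_u(d,\la_1')=0$ for $\la_1'>d$, so assume $l(\la)\leq d$. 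We treat the three factors in the definition \eqref{Eqn: def of L(a,b)} of $L_w$ separately.

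\emph{The $(wq;q)$ ratios.} The product $\prod_{i\geq0}(uq;q)_{\la_i'}/(uq;q)_{\la_{i+1}'}$ telescopes to $(uq;q)_d/(uq;q)_{0}=(uq;q)_d$, which matches the factor in $P_{d,u}$.

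\emph{The powers of $u$ and $q$.} We have $\prod_{i\geq0}u^{\la_{i+1}'}=u^{\sum_{i\geq1}\la_i'}=u^{\abs{\la}}$. For $q$ we have $\prod_{i\geq0}q^{(\la_{i+1}')^2}=q^{\sum_{i\geq1}(\la_i')^2}$. Since $\sum_i(\la_i')^2=2\sum_i\binom{\la_i'}{2}+\sum_i\la_i'=2n(\la)+\abs{\la}$, this exponent equals the one in $P_{d,u}$.

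\emph{The $q$-binomials.} Writing each binomial as
\[
\qbin{\la_i'}{\la_{i+1}'}_q=\frac{(q;q)_{\la_i'}}{(q;q)_{\la_{i+1}'}(q;q)_{m_i(\la)}}
\]
for $i\geq1$, and $\qbin{d}{\la_1'}_q$ for $i=0$, the factorials $(q;q)_{\la_i'}$ for $i\geq 2$ cancel telescopically. What remains is $(q;q)_{\la_1'}/\prod_{i\geq1}(q;q)_{m_i(\la)}=(q;q)_{l(\la)}/b_{\la}(q)$, multiplied by $\qbin{d}{l(\la)}_q$. This is exactly the remaining part of $P_{d,u}(\la)$, which proves the factorization.

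\emph{Probabilistic interpretation.} Lemma~\ref{LemLsum1} says each row of $L_u$ sums to $1$. Summing the factorization over all $\la$ with prescribed $\la_1',\dots,\la_j'$, and summing successively over $\la_{j+1}',\la_{j+2}',\dots$ (a finite sum, since $\la_{k}'\leq d$), gives
\[
\P(\la_1'=b_1,\dots,\la_j'=b_j)=L_u(d,b_1)\prod_{i=1}^{j-1}L_u(b_i,b_{i+1}).
\]
Taking $j=1$ gives $\P(\la_1'=b)=L_u(d,b)$. Taking $j=i+1$, dividing by the case $j=i$, and summing over the earlier coordinates gives the transition probability $\P(\la_{i+1}'=b\mid \la_i'=a)=L_u(a,b)$, and indeed shows the sequence $(\la_i')$ is Markov. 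The only delicate point is the bookkeeping in the telescoping of the $q$-factorials; there is no real obstacle.
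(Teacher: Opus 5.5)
Your verification of the factorization is the same telescoping computation as the paper's proof: set $\la_0'=d$, expand the definition of $L_u$, and collect the $u$-powers, the $q^{(\la_i')^2}$, the $(uq;q)$ ratios and the $q$-binomials. That part is fine.

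The paper only writes ``Therefore'' for the probabilistic consequences. Your sketch of them has one soft spot.

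\emph{The sum is not finite.} The set of partitions with $l(\la)\le d$ and prescribed $\la_1',\dots,\la_j'$ is infinite, because $\la_1$ is unbounded. So you cannot literally sum ``successively over $\la_{j+1}',\la_{j+2}',\dots$''. The identity $\sum_b L_u(a,b)=1$ only eliminates the \emph{last} variable of a chain, and here there is no last variable.

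\emph{Row sums equal to $1$ are not enough.} If some $L(a,a)$ with $a\ge1$ equalled $1$, the eventually-zero tails starting from $a$ would carry total mass $0$. What you actually need is that, for $c_0=b$, the tail mass $\sum\prod_{i\ge1}L_u(c_{i-1},c_i)$ equals $1$. Here the sum runs over weakly decreasing, eventually-zero sequences $b=c_0\ge c_1\ge\cdots$.

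\emph{How to close it.} All terms are nonnegative, since $(uq;q)_a/(uq;q)_b=(uq^{b+1};q)_{a-b}>0$. The state $0$ is absorbing, and $L_u(a,a)=u^aq^{a^2}\le uq<1$ for $a\ge1$. Hence the chain on $\{0,\dots,b\}$ reaches $0$ almost surely. Concretely, the tails with $c_N=0$ have total mass equal to the $N$-step transition probability from $b$ to $0$. This tends to $1$, and monotone convergence finishes the argument.

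Alternatively, observe that this tail mass is exactly $\sum_\la P_{b,u}(\la)$. That sum equals $1$ by Theorem~\ref{thm53Fulman} with $\mu=0$, and that theorem is proved via the skew Cauchy identity, so citing it is not circular.

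With this added, your formula for the joint law of $(\la_1',\dots,\la_j')$ is justified. Both displayed formulas and the Markov property then follow as you describe.
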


\begin{proof}
Let $\la=(\la_1,\la_2,\dots)$ be a partition. 
Further let $\la'_0:=d$.
Then, by \eqref{Eqn: def of L(a,b)},
\[
\prod_{i\geq 0} L_u(\la_i',\la_{i+1}')
=\prod_{i\geq 0} u^{\la_{i+1}'} q^{\la_{i+1}'^2} 
\frac{(uq;q)_{\la_i'}}{(uq;q)_{\la_{i+1}'}}\qbin{\la_i'}{\la_{i+1}'}_q\\
=\frac{u^{\abs{\la}} q^{2n(\la)+\abs{\la}}(q,uq;q)_d}{(q;q)_{d-l(\la)}b_{\la}(q)}
=P_{d,u}(\lambda). \qedhere
\]
\end{proof}

\begin{proof}[Proof of Theorem~\ref{thm53Fulman}]
By \eqref{Sur1} and the homogeneity of the Hall--Littlewood symmetric functions,
\[
\abs{\Sur_p(\la,\mu)}=q^{-n(\mu)-n(\la)-\abs{\la}} b_{\la}(q) 
u^{\abs{\mu}-\abs{\la}} P_{\la/\mu}(uq,uq^2,\dots;q).
\]
Therefore, the $\mu$-moment of $P_{d,u}$ is given by
\begin{align*}
M_{p,\mu}(P_{d,u})
&=\sum_{\la} P_{d,u}(\la) \abs{\Sur_p(\la,\mu)} \\
&=u^{\abs{\mu}} q^{-n(\mu)} (uq;q)_d 
\sum_{\la} b_{\la}(q) P_{\la}(1,q,\dots,q^{d-1};q)
P_{\la/\mu}(uq,uq^2,\dots;q)\\
&=u^{\abs{\mu}} q^{-n(\mu)} (uq;q)_d 
\sum_{\la}  Q_{\la}(1,q,\dots,q^{d-1};q)
P_{\la/\mu}(uq,uq^2,\dots;q).
\end{align*}
By the skew Cauchy identity \eqref{Eqn: Skew Cauchy identity} with $x_i=q^{i-1}$ for $1\leq i\leq d$, $x_i=0$ for $i>d$ and $y_j=uq^j$ for $j\geq 1$, this simplifies to
\[
M_{p,\mu}(P_{d,u})
=u^{\abs{\mu}} q^{-n(\mu)} Q_{\mu}(1,q,\dots,q^{d-1};q).
\]
Finally, by \eqref{Eq_p213} this yields
\[
M_{p,\mu}(P_{d,u}) =u^{\abs{\mu}} (q;q)_{l(\mu)} \qbin{d}{l(\mu)}_q. \qedhere
\]
\end{proof}

\subsection{Moments of a generalization of Malle's distribution}\label{Hallmome2}
In this subsection, we prove Theorem~\ref{thm: moments of Malle}.
First, recall that
\[
\mMalled(\lambda)
=P_{d,q^t}(\la)\, 
\frac{q^{-\binom{l(\la)}{2}}(q^{t+1};q)_{l(\la)}}
{(q^{2t+2};q^2)_d}.
\]
We note that by the $q$-binomial theorem \eqref{qbt} with $(z,n)\mapsto (-wzq,a)$, 
\begin{equation}\label{qbtone}
\sum_{b=0}^a L_w(a,b) \, \frac{z^b q^{-\binom{b}{2}}(wq;q)_b}{(wq,-wzq;q)_a}=1.
\end{equation}
The dominated convergence theorem implies that
\[
\sum_{b=0}^{\infty} L_w(\infty,b) \, \frac{z^b q^{-\binom{b}{2}}(wq;q)_b}{(wq,-wzq;q)_{\infty}}=1.
\]
Taking $z=1$ and using $(a,-a;q)_n=(a^2;q^2)_n$, this provides an algorithm for choosing a random partition according to $\mMalled$.

\begin{prop}
For $\la$ a partition,
\[
\mMalled(\la)
=\frac{q^{-\binom{\la_1'}{2}}(q^{t+1};q)_{\la_1'}}{(q^{2t+2};q^2)_{d}}\,
L_{q^t}(d,\la_1') \prod_{i\geq 1} L_{q^t}(\la_i',\la_{i+1}').
\]
With respect to the distribution $\mMalled$ on partitions,
\[
\P(\la_1'=b)=
\frac{q^{-\binom{b}{2}}(q^{t+1};q)_b}{(q^{2t+2};q^2)_{d}}\, 
L_{q^t}(d,b)
\quad \text{and}\text\quad
\P(\la'_{i+1}=b \mid \la'_i=a)=L_{q^t}(a,b).
\]
\end{prop}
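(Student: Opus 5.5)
The plan is to combine the factorization of $P_{d,u}$ from Proposition~\ref{proposition} (with $u=q^t$) with the definition of $\mMalled$. By definition,
\[
\mMalled(\la)=P_{d,q^t}(\la)\,\frac{q^{-\binom{l(\la)}{2}}(q^{t+1};q)_{l(\la)}}{(q^{2t+2};q^2)_d},
\]
and $l(\la)=\la_1'$. Substituting
\[
P_{d,q^t}(\la)=L_{q^t}(d,\la_1')\prod_{i\geq 1}L_{q^t}(\la_i',\la_{i+1}')
\]
gives the displayed factorization of $\mMalled(\la)$ at once.

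For the probabilistic statements, I sum this factorization over partitions. A partition is determined by its conjugate sequence $\la_1'\geq\la_2'\geq\cdots$, which is eventually zero. Fix $\la_1'=b$ and sum over $\la_2',\la_3',\dots$ one index at a time, starting from the far end. Lemma~\ref{LemLsum1} gives $\sum_{c}L_{q^t}(a,c)=1$, so each summation collapses to $1$. The process terminates because $L_{q^t}(0,c)=\chi(c=0)$, and only finitely many $\la_i'$ are nonzero. This yields
\[
\P(\la_1'=b)=\frac{q^{-\binom{b}{2}}(q^{t+1};q)_b}{(q^{2t+2};q^2)_d}\,L_{q^t}(d,b).
\]

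Applying the same summation to the tail beyond index $i+1$, the joint probability of a fixed initial segment $\la_1',\dots,\la_{i+1}'$ equals the prefactor times the finite product of $L$-factors up to $L_{q^t}(\la_i',\la_{i+1}')$. Dividing by the corresponding marginal of the segment up to $\la_i'$ gives
\[
\P(\la_{i+1}'=b\mid\la_i'=a)=L_{q^t}(a,b).
\]
The prefactor depends only on $\la_1'$, so it cancels in this ratio, and the chain is Markov from index $1$ onward.

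Finally, I verify that $\mMalled$ is a probability distribution, i.e.\ that $\sum_b\P(\la_1'=b)=1$. Apply \eqref{qbtone} with $w=q^t$, $z=1$ and $a=d$:
\[
\sum_{b=0}^d L_{q^t}(d,b)\,\frac{q^{-\binom{b}{2}}(q^{t+1};q)_b}{(q^{t+1},-q^{t+1};q)_d}=1.
\]
Using $(x,-x;q)_d=(x^2;q^2)_d$, the denominator is $(q^{2t+2};q^2)_d$, which matches the marginal computed above. I expect no real obstacle here. The only points needing care are the identification $l(\la)=\la_1'$ and justifying the tail summation (a finite telescoping), rather than any analytic convergence issue.
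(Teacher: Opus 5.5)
Your proposal is correct and follows the route the paper indicates; the paper states this proposition without a separate proof. The factorization is Proposition~\ref{proposition} with $u=q^t$ and $l(\la)=\la_1'$, the transitions come from Lemma~\ref{LemLsum1}, and the normalization is \eqref{qbtone} at $z=1$ with $(x,-x;q)_d=(x^2;q^2)_d$.

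One point needs fixing: you claim that summing out the tail is a finite telescoping with no convergence issue. Partitions with $\la_1'=b$ have unbounded length, so telescoping only shows that the free sums of $\prod_{i<N}L_{q^t}(\la_i',\la_{i+1}')$ over $\la_2',\dots,\la_N'$ equal $1$. To conclude, you also need the non-increasing chain to be absorbed at $0$. This holds because $L_{q^t}(a,0)=(q^{t+1};q)_a\geq(q^{t+1};q)_\infty>0$ for $t>-1$, which gives $\P(\la_N'>0)\leq\big(1-(q^{t+1};q)_\infty\big)^{N-1}\to 0$.
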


We are now ready to prove Theorem~\ref{thm: moments of Malle}.

\begin{proof}[Proof of Theorem~\ref{thm: moments of Malle}]
By \eqref{nice} and \eqref{Sur2}, 
\[
P_{d,u}(\la) \abs{\Sur_p(\la,\mu)}\\
=u^{\abs{\la}}\,
q^{n(\la/\mu)+n(\la)-n(\mu)+\abs{\la}-\abs{\mu}}
\frac{(q,uq;q)_d}{(q;q)_{d-\la'_1}(q;q)_{\la'_1-\mu'_1}}
\prod_{i\geq 2} \qbin{\la'_{i-1}-\mu'_i}{\la'_i-\mu'_i}_q. 
\]
Since
\[
u^{\abs{\la}}\,
q^{n(\la/\mu)+n(\la)-n(\mu)+\abs{\la}-\abs{\mu}}(uq;q)_d
=u^{\abs{\mu}} \prod_{i\geq 1} (uq^{\mu'_i})^{\la'_i-\mu'_i} 
q^{(\la'_i-\mu'_i)^2} 
\frac{(uq^{\mu'_i+1};q)_{\la'_{i-1}-\mu'_i}}
{(uq^{\mu'_i+1};q)_{\la'_i-\mu'_i}},
\]
this yields
\begin{align}\label{Pdula}
&P_{d,u}(\la) \abs{\Sur_p(\la,\mu)}\\
&\qquad=u^{\abs{\mu}}\frac{(q;q)_d}{(q;q)_{d-\mu'_1}}
\prod_{i\geq 0} (uq^{\mu'_{i+1}})^{\la'_{i+1}-\mu'_{i+1}} 
q^{(\la'_{i+1}-\mu'_{i+1})^2} 
\frac{(uq^{\mu'_{i+1}+1};q)_{\la'_i-\mu'_{i+1}}}
{(uq^{\mu'_{i+1}+1};q)_{\la'_{i+1}-\mu'_{i+1}}}
\qbin{\la'_i-\mu'_{i+1}}{\la'_{i+1}-\mu'_{i+1}}_q \notag \\
&\qquad=u^{\abs{\mu}}
(q;q)_{l(\mu)} \qbin{d}{l(\mu)}
\prod_{i\geq 1} L_{uq^{\mu'_i}}\big(\la'_{i-1}-\mu'_i,\la'_i-\mu'_i\big),
\notag
\end{align}
where $\la'_0:=d$.
With $u=q^t$, this implies
\begin{align*}
&\mMalled(\lambda) \abs{\Sur_p(\la,\mu)}\\
&\qquad=\frac{q^{t\abs{\mu}-\binom{l(\la)}{2}}(q^{t+1};q)_{l(\la)}(q;q)_{l(\mu)}}
{(q^{t+1};q)_d(-q^{t+1};q)_{d}}
\qbin{d}{l(\mu)}_q
\prod_{i\geq 1} L_{q^{\mu'_i+t}}\big(\la'_{i-1}-\mu'_i,\la'_i-\mu'_i\big).
\end{align*}
For a positive integer $N$ such that $N\geq\mu_1$, we define 
\[
S_N:=\sum_{\mu\subseteq\la\subseteq (N^d)}
\frac{\mMalled(\la) \abs{\Sur_p(\la,\mu)}}{(q^{t+1};q)_{\la'_N}}.
\]
Then
\begin{align*}
S_N&=
\sum_{l_1,\dots,l_N\geq 0}\bigg(
\frac{q^{t\abs{\mu}-\binom{l_1+l(\mu)}{2}}
(q^{t+1};q)_{l_1+\mu'_1}(q;q)_{l(\mu)}}
{(q^{t+1};q)_d(-q^{t+1};q)_{d}} 
\qbin{d}{l(\mu)}_q
 \\
& \qquad\qquad\qquad\times L_{q^{l(\mu)+t}}\big(d-l(\mu),l_1\big)
\prod_{i=2}^N L_{q^{\mu'_i+t}}\big(l_{i-1}+m_{i-1}(\mu),l_i\big)\bigg)\\
&= 
\frac{q^{t\abs{\mu}-\binom{l(\mu)}{2}} (q;q)_{l(\mu)}}{ (-q^{d+t+1-l(\mu)};q)_{l(\mu)} }
\qbin{d}{l(\mu)}_q 
\sum_{l_1=0}^{d-l(\mu)}
\frac{q^{-\binom{l_1}{2}-l_1 l(\mu)}(q^{l(\mu)+t+1};q)_{l_1}}
{(q^{l(\mu)+t+1},-q^{t+1};q)_{d-l(\mu)}}
L_{q^{l(\mu)+t}}\big(d-l(\mu),l_1\big),
\end{align*}
where Lemma~\ref{LemLsum1} has been utilized to carry out the summations over $l_N,\dots,l_2$, in that order. 
The sum over $l_1$ once again is equal to $1$ by \eqref{qbtone} with 
$(w,z,a)\mapsto(q^{l(\mu)+t},q^{-l(\mu)},d-l(\mu))$, resulting in
\[
S_N=\frac{q^{t\abs{\mu}-\binom{l(\mu)}{2}} (q;q)_{l(\mu)}}{ (-q^{d+t+1-l(\mu)};q)_{l(\mu)} }
\qbin{d}{l(\mu)}_q.
\]
Letting $N$ tend to infinity shows that
\[
M_{p,\mu} (\mMalled)
=\frac{q^{t\abs{\mu}-\binom{l(\mu)}{2}} (q;q)_{l(\mu)}}{ (-q^{d+t+1-l(\mu)};q)_{l(\mu)} }
\qbin{d}{l(\mu)}_q.
\]
Finally, letting $d$ tend to infinity, we find
\[
M_{p,\mu} (\mMalle)
=q^{t\abs{\mu}-\binom{l(\mu)}{2}}. \qedhere
\]
\end{proof}

\subsection{Moments of a generalization of Garton's distribution}\label{Hallmome3}

In this section, we prove Theorem~\ref{thm: moments of Garton}. 

\begin{proof}[Proof of Theorem~\ref{thm: moments of Garton}]
Recall that
\[
\mGartond(\la)
=P_{d,1}(\la)\,
\frac{q^{-\sum_{i=1}^n\binom{\la'_i}{2}}
(q;q)_{\la'_n}}{(q;q)_d}
(q;q^2)_{\ceil{(d-l(\la))/2}}
\prod_{i=1}^{n-1} (q;q^2)_{\ceil{m_i(\la)/2}}.
\]
Then, by \eqref{Pdula},
\begin{align*}
&\mGartond(\la) \abs{\Sur_p(\la,\mu)}\\
&\qquad=(q;q)_{l(\mu)} \qbin{d}{l(\mu)}
\frac{q^{-\sum_{i=1}^n\binom{\la'_i}{2}} (q;q)_{\la'_n}}{(q;q)_d}
\prod_{i=0}^{n-1} (q;q^2)_{\ceil{m_i(\la)/2}}
\prod_{i\geq 1} L_{q^{\mu'_i}}\big(\la'_{i-1}-\mu'_i,\la'_i-\mu'_i\big),
\end{align*}
where $\la'_0:=d$ and $m_0(\la):=d-l(\la)$.
Now assume that $N$ is an integer such that $\mu_1,n\leq N$ and consider the sum
\[
S_N:=\sum_{\mu\subseteq\la\subseteq (N^d)}
\frac{\mGartond(\la) \abs{\Sur_p(\la,\mu)}}{(q;q)_{\la'_N}}.
\]
Once again applying Lemma~\ref{LemLsum1} yields
\begin{align*}
S_N&=(q;q)_{l(\mu)} \qbin{d}{l(\mu)}
\sum_{l_1,\dots,l_N\geq 0}\bigg(
\frac{q^{-\sum_{i=1}^n\binom{l_i+\mu'_i}{2}}(q;q)_{l_n+\mu'_n}
(q;q^2)_{\ceil{(d-l(\mu)-l_1)/2}}}{(q;q)_d}\\
&\quad\times
L_{q^{l(\mu)}}\big(d-l(\mu),l_1\big)
\prod_{i=1}^{n-1} (q;q^2)_{\ceil{(l_i-l_{i+1}+m_i(\mu))/2}}
\prod_{i=2}^N L_{q^{\mu'_i}}\big(l_{i-1}+m_{i-1}(\mu),l_i\big)\bigg) \\
&=(q;q)_{l(\mu)} \qbin{d}{l(\mu)}
\sum_{l_1,\dots,l_n\geq 0}\bigg(
\frac{q^{-\sum_{i=1}^n\binom{l_i+\mu'_i}{2}}(q;q)_{l_n+\mu'_n}
(q;q^2)_{\ceil{(d-l(\mu)-l_1)/2}}}{(q;q)_d}\\
&\quad\times
L_{q^{l(\mu)}}\big(d-l(\mu),l_1\big)
\prod_{i=2}^n (q;q^2)_{\ceil{(l_{i-1}-l_i+m_{i-1}(\mu))/2}}
L_{q^{\mu'_i}}\big(l_{i-1}+m_{i-1}(\mu),l_i\big)\bigg).
\end{align*}
Recalling the definition \eqref{Kab} of $K(a,b)$, we now observe that
\[
(q;q^2)_{\ceil{(a-b)/2}} L_{q^c}(a,b)=
q^{\binom{b}{2}+bc}\,\frac{(q;q)_{a+c}}{(q;q)_{b+c}} K(a,b).
\]
Hence the above may be rewritten as
\[
S_N=q^{-\sum_{i=1}^n\binom{\mu'_i}{2}}\,
\frac{(q;q)_d}{(q;q)_{d-l(\mu)}}
\sum_{l_1,\dots,l_n\geq 0}
K\big(d-l(\mu),l_1\big)\prod_{i=2}^n K\big(l_{i-1}+m_{i-1}(\mu),l_i\big).
\]
By Lemma~\ref{Kabsum} we can now carry out the remaining $n$ sums, so that
\[
S_N=q^{-\sum_{i=1}^n\binom{\mu'_i}{2}}\,
\frac{(q;q)_d}{(q;q)_{d-l(\mu)}}.
\]
Since this is independent of $N$,
\[
M_{p,\mu}\big(\mGartond\big)
=q^{-\sum_{i=1}^n\binom{\mu'_i}{2}}\,
\frac{(q;q)_d}{(q;q)_{d-l(\mu)}}.
\]
Taking the $d\to\infty$ limit, we finally get
\[
M_{p,\mu}\big(\mGarton\big)
=q^{-\sum_{i=1}^n\binom{\mu'_i}{2}}.\qedhere
\]
\end{proof}

\section{Technical lemmas}

\subsection{Lemmas for $K(a,b)$}\label{Subsec: Technical Lemmas K(a,b)}

In this section we will prove Lemmas~\ref{Kabsum}, \ref{Lem: Sum K(a,b) q^-kb} and \ref{minusonesum}.
To this end we prove a result that implies all three lemmas as easy corollaries.

\begin{prop}\label{Prop: sum K(a,b) t^b}
Let $a$ be a nonnegative integer.
Then
\begin{equation}\label{t-generating function}
\sum_{b=0}^a K(a,b) t^b
=\qhyp{2}{0}{t,q^{-a}}{\text{--}}{q,-q^{a+1}}.
\end{equation}
\end{prop}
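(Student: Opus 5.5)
The plan is to compare both sides of \eqref{t-generating function} as polynomials in $t$ of degree at most $a$. Expand the right-hand side. The ${}_2\phi_0$ series has $s-r+1=-1$, so the $k$-th term is
\[
\frac{(t,q^{-a};q)_k}{(q;q)_k}\Big((-1)^kq^{\binom{k}{2}}\Big)^{-1}(-q^{a+1})^k
=\frac{(t;q)_k(q^{-a};q)_k}{(q;q)_k}\,q^{(a+1)k-\binom{k}{2}}.
\]
Using $(q^{-a};q)_k=(-1)^kq^{\binom{k}{2}-ak}(q;q)_a/(q;q)_{a-k}$, this becomes
\[
(-1)^k q^{k}\qbin{a}{k}_q (t;q)_k .
\]
So the right-hand side equals $\sum_{k=0}^a(-q)^k\qbin{a}{k}_q(t;q)_k$, and the claim reduces to the polynomial identity
\[
\sum_{b=0}^a q^{\binom{b+1}{2}}(q;q^2)_{\ceil{(a-b)/2}}\qbin{a}{b}_q\,t^b=\sum_{k=0}^a(-q)^k\qbin{a}{k}_q(t;q)_k .
\]

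To prove this identity, I expand $(t;q)_k$ by the terminating $q$-binomial theorem \eqref{qbt}, namely $(t;q)_k=\sum_b(-t)^bq^{\binom b2}\qbin{k}{b}_q$. Then I swap the sums and use $\qbin{a}{k}_q\qbin{k}{b}_q=\qbin{a}{b}_q\qbin{a-b}{k-b}_q$. Setting $j=k-b$, the coefficient of $t^b$ on the right becomes
\[
(-1)^b q^{\binom b2}\qbin{a}{b}_q\sum_{j=0}^{a-b}(-q)^{j+b}\qbin{a-b}{j}_q
=q^{\binom{b}{2}+b}\qbin{a}{b}_q\sum_{j=0}^{a-b}(-q)^j\qbin{a-b}{j}_q .
\]
The signs combine as $(-1)^b(-1)^b=1$. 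The Rogers--Szeg\H{o} evaluation \eqref{RS} then gives the inner sum as $(q;q^2)_{\ceil{(a-b)/2}}$. Since $\binom b2+b=\binom{b+1}2$, this matches $K(a,b)$ exactly.

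The only genuinely delicate point is the bookkeeping of signs and powers of $q$ in the ${}_2\phi_0$ convention, namely the factor $((-1)^kq^{\binom k2})^{-1}$. I would double-check it against the case $a=1$ before writing the proof. Both sides then equal $(1-q)+qt$: on the left $K(1,0)=1-q$ and $K(1,1)=q$, and on the right $1-q(1-t)$.
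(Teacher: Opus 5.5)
Your proof is correct, including the handling of the factor $\big((-1)^kq^{\binom{k}{2}}\big)^{-1}$ in the ${}_2\phi_0$ convention. It runs in the opposite direction from the paper's proof.

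The paper starts from the left-hand side. It expands $(q;q^2)_{\ceil{(a-b)/2}}$ using \eqref{RS}, swaps the sums, and carries out the sum over $b$ with \eqref{qbt}, obtaining $\sum_k \qbin{a}{k}_q(-q)^k(-qt;q)_{a-k}$. It writes this as $(-qt;q)_a\,{}_2\phi_1(0,q^{-a};-q^{-a}/t;q,q/t)$ and then reaches the ${}_2\phi_0$ via the limiting transformation \eqref{III8}.

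You start from the ${}_2\phi_0$ instead. Because it terminates, it is simply $\sum_k(-q)^k\qbin{a}{k}_q(t;q)_k$. You then extract the coefficient of $t^b$ using \eqref{qbt}, the identity $\qbin{a}{k}_q\qbin{k}{b}_q=\qbin{a}{b}_q\qbin{a-b}{k-b}_q$, and \eqref{RS}.

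What your route buys:
\begin{itemize}
\item It uses only the two summation formulas that both proofs share and needs no transformation formula at all.
\item It stays inside polynomial identities in $t$. The paper's intermediate ${}_2\phi_1$ has $t$ in the denominator of a lower parameter and of the argument.
\item It makes explicit the change of basis from $\{t^b\}$ to $\{(t;q)_k\}$, which the paper only mentions in a remark.
\end{itemize}

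What the paper's route buys: it is a derivation rather than a verification, producing the ${}_2\phi_0$ form from the left side without knowing it in advance. It also sits naturally alongside the transformations \eqref{III6}--\eqref{III7}, which are applied to this same ${}_2\phi_0$ in the corollaries that follow.
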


We remark that the essence of this lemma is a change of basis in the polynomial ring $(\mathbb{Z}[q])[t]$ from the standard basis $\{t^b\}_{b\geq 0}$ to the basis $\{(t;q)_b\}_{b\geq 0}$.

\begin{proof}
According to \eqref{Kab},
\[
K(a,b)=q^{\binom{b+1}{2}} (q;q^2)_{\ceil{\frac{a-b}{2}}} \qbin{a}{b}_q.
\]
We now use \eqref{RS} to write $(q;q^2)_{\ceil{\frac{a-b}{2}}}$ as a sum.
Then
\begin{align*}
\sum_{b=0}^a K(a,b) t^b
&= \sum_{b=0}^a \qbin{a}{b}_q q^{\binom{b+1}{2}} t^b \sum_{k=0}^{a-b} 
\qbin{a-b}{k}_q (-q)^k \\
&= \sum_{k=0}^a \qbin{a}{k}_q (-q)^k \sum_{b=0}^{a-k} 
\qbin{a-k}{b}_q q^{\binom{b}{2}} (qt)^b \\
&= \sum_{k=0}^a \qbin{a}{k}_q (-q)^k (-qt;q)_{a-k} \\
&=(-qt;q)_a \, \qhyp{2}{1}{0,q^{-a}}{-q^{-a}/t}{q,\frac{q}{t}},
\end{align*}
where the second-last equality follows from the $q$-binomial theorem \eqref{qbt}.
Finally, \eqref{t-generating function} results by application of the transformation formula \eqref{III8} for $(n,c,z)=(a,-q^{-a}/t,q/t)$.
\end{proof}

\begin{cor}
Lemma~\ref{Kabsum} holds.
\end{cor}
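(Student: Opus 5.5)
The plan is to specialize Proposition~\ref{Prop: sum K(a,b) t^b} at $t=1$. By \eqref{t-generating function},
\[
\sum_{b=0}^a K(a,b)=\qhyp{2}{0}{1,q^{-a}}{\text{--}}{q,-q^{a+1}},
\]
so the whole proof reduces to evaluating this ${}_2\phi_0$. The point is that one of the numerator parameters is $t=1$. Since $(1;q)_k=0$ for every $k\geq 1$, only the $k=0$ term of the series survives, and that term equals $1$. The series is terminating (it is cut off by $q^{-a}$), so there are no convergence issues. This gives $\sum_{b=0}^a K(a,b)=1$ immediately.

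The only thing to check is that the identity \eqref{t-generating function} genuinely holds as a polynomial identity in $t$, so that specializing to $t=1$ is legitimate. The remark following Proposition~\ref{Prop: sum K(a,b) t^b} says exactly this: both sides are polynomials in $t$ of degree at most $a$, the left-hand side expanded in the basis $\{t^b\}$ and the right-hand side in the basis $\{(t;q)_b\}$.

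There is a subtlety in the proof of the proposition, which passed through the transformation \eqref{III8} with $c=-q^{-a}/t$ and $z=q/t$. These parameters are singular at special values of $t$, but the intermediate expression is a rational function of $t$ that agrees with a polynomial identity at generic $t$. Hence the final identity extends to all $t$, in particular $t=1$, by continuity (or by polynomial identity). I expect this to be the only place needing any care.

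As a sanity check independent of the proposition, one could instead put $t=1$ in the third line of the proof of Proposition~\ref{Prop: sum K(a,b) t^b}. This gives $\sum_{k=0}^a \qbin{a}{k}_q(-q)^k(-q;q)_{a-k}$, which should equal $1$. For $a=1$ this is $(1+q)-q=1$, confirming the claim.
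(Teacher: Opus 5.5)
Your proof is correct and is the same as the paper's: you specialize \eqref{t-generating function} at $t=1$, and the ${}_2\phi_0$ series collapses to its $k=0$ term because $(1;q)_k=0$ for $k\geq 1$. Your added remark is also sound: \eqref{t-generating function} is a polynomial identity in $t$, which the paper leaves implicit, so specializing at $t=1$ is legitimate even though the proof of Proposition~\ref{Prop: sum K(a,b) t^b} passes through the parameters $c=-q^{-a}/t$ and $z=q/t$.
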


\begin{proof}
Since for $t=1$ the ${_2\phi_0}$ series on the right-hand side of \eqref{t-generating function} trivializes to $1$, the claim follows.
\end{proof}

\begin{cor}
Lemma~\ref{Lem: Sum K(a,b) q^-kb} holds.
\end{cor}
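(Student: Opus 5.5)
We specialize Proposition~\ref{Prop: sum K(a,b) t^b} at $t=q^{-k}$ and then transform the resulting terminating ${_2\phi_0}$ series into the finite sum on the right of \eqref{Kabsumk}. With $t=q^{-k}$, the right-hand side of \eqref{t-generating function} becomes
\[
\qhyp{2}{0}{q^{-k},q^{-a}}{\text{--}}{q,-q^{a+1}},
\]
which terminates at both $k$ and $a$. We want a sum over $l$ running from $0$ to $k$, with terms $q^{(a+1)l}$. So the natural step is a transformation that exchanges the roles of $q^{-a}$ and $q^{-k}$ and moves $a$ into the exponent of the argument.

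The first tool to try is \eqref{III7}, applied with $b=q^{-k}$, $n=a$ and $z=-q^{a+1}$. This gives
\[
q^{ka}\,\qhyp{3}{2}{q^{-k},-q,q^{-a}}{0,0}{q,q},
\]
which still has the wrong shape, because $a$ sits in a numerator parameter rather than in the argument. A better route is to read the ${_2\phi_0}$ as a ${_2\phi_0}$ terminating in $k$. That is, swap the two numerator parameters, using symmetry to set $b=q^{-a}$ and $n=k$. Then apply \eqref{III8} in reverse: with $n=k$, $q/z=q^{-a}$ and $z/c=-q^{a+1}$, we get $z=q^{a+1}$ and $c=-1$. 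Thus
\[
\qhyp{2}{0}{q^{-k},q^{-a}}{\text{--}}{q,-q^{a+1}}
=\frac{(-1;q)_k}{q^{\binom{k}{2}}}\,\qhyp{2}{1}{0,q^{-k}}{-1}{q,q^{a+1}},
\]
since $(-c)^n q^{\binom{n}{2}}=q^{\binom{k}{2}}$ when $c=-1$. Expanding this ${_2\phi_1}$ gives exactly
\[
\sum_{l=0}^k \frac{(q^{-k};q)_l}{(q,-1;q)_l}\,q^{(a+1)l}
\]
(the $0$ parameter contributes $(0;q)_l=1$). This is precisely \eqref{Kabsumk}.

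The main thing to check is that \eqref{III8} remains valid here. It was stated with $n$ the terminating index for $q^{-n}$ and general $z,c$. Our $c=-1$ is in general position, since $(-1;q)_l\neq 0$, so no denominator vanishes. Our $z=q^{a+1}$ is also harmless, because both sides are polynomial identities in $z^{\pm1}$ once $n$ terminates. One must also verify that the symmetric role of the two $q^{-\cdot}$ parameters in ${_2\phi_0}$ is legitimate, which is immediate from the definition. If the sign bookkeeping in $(-c)^n$ or in the ${_2\phi_0}$ normalization $\big((-1)^l q^{\binom{l}{2}}\big)^{-1}$ goes wrong, the fallback is to check the identity directly for $k=0$, where both sides equal $1$ by Lemma~\ref{Kabsum}, and for $k=1$ to fix the conventions.
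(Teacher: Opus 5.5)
Your proposal is correct and is the paper's argument: specialize $t=q^{-k}$ in Proposition~\ref{Prop: sum K(a,b) t^b}, then apply \eqref{III8} with $(n,c,z)=(k,-1,q^{a+1})$ (using that the ${_2\phi_0}$ is symmetric in its two numerator parameters) to get $q^{-\binom{k}{2}}(-1;q)_k\,{_2\phi_1}(0,q^{-k};-1;q,q^{a+1})$, which is \eqref{Kabsumk}. One slip in your discarded detour via \eqref{III7}: the middle numerator parameter should be $-q^{1-k}$ rather than $-q$, but this does not affect your actual argument.
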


\begin{proof}
Let $k$ be a nonnegative integer.
Specializing $t=q^{-k}$ in \eqref{t-generating function} then gives
\[
\sum_{b=0}^a K(a,b) q^{-bk}
=\qhyp{2}{0}{q^{-k},q^{-a}}{\text{--}}{q,-q^{a+1}}.
\]
We once again apply \eqref{III8}, this time with $(n,c,z)=(k,-1,q^{a+1})$,
to find
\[
\sum_{b=0}^a K(a,b) q^{-bk}
=q^{-\binom{k}{2}} (-1;q)_k \: \qhyp{2}{1}{0,q^{-k}}{-1}{q,q^{a+1}}.
\]
This is equivalent to the statement of Lemma~\ref{Lem: Sum K(a,b) q^-kb}.
\end{proof}

\begin{cor}
Lemma~\ref{minusonesum} holds.
\end{cor}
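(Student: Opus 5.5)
Specialize Proposition~\ref{Prop: sum K(a,b) t^b} at $t=-q^{-k}$, where $k$ is a positive integer. This gives
\[
\sum_{b=0}^a (-1)^b K(a,b) q^{-kb}
=\qhyp{2}{0}{-q^{-k},q^{-a}}{\text{--}}{q,-q^{a+1}}.
\]
The series on the right terminates in $a$, but we want a sum of length $k$. The step that should reduce its length is the transformation \eqref{III7} with $b=-q^{-k}$, $n=a$ and $z=-q^{a+1}$. Then $bzq^{-n}=q^{1-k}$, and \eqref{III7} yields
\[
(-1)^a q^{ka}\,
\qhyp{3}{2}{-q^{-k},q^{1-k},q^{-a}}{0,0}{q,q}.
\]
The numerator parameter $q^{1-k}$ makes this series terminate after $k-1$ terms. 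This fits the range $0\le j\le k-1$ and the factor $\qbin{k-1}{j}_q$ in the target, since $(q^{1-k};q)_j/(q;q)_j=(-1)^jq^{\binom{j}{2}-(k-1)j}\qbin{k-1}{j}_q$.

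The remaining step is to match the summand. The target has $(-q^2;q)_{k-1}/(-q^2;q)_j=(-q^{j+2};q)_{k-1-j}$ and a power $q^{(a-k+2)j}$. In contrast, the ${}_3\phi_2$ summand carries $(-q^{-k};q)_j(q^{-a};q)_j$. The factor $(q^{-a};q)_j$ is a polynomial in $q^{-a}$ rather than a monomial, so the two sides do not agree term by term. I therefore expect one more transformation to be needed, and this is the main obstacle. I would first try sending $b\to0$ in a terminating ${}_3\phi_2$ transformation such as Sears' or (III.13) of \cite{GR04}, applied with $q^{1-k}$ as the terminating parameter. The aim is to move $q^{-a}$ into the argument as $q^{a+\cdots}$, producing a ${}_2\phi_1$ or ${}_1\phi_1$ in $q^{1-k}$ with argument proportional to $q^{a}$.

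If that fails, I would route the computation through \eqref{III6}. Concretely, I would write the ${}_2\phi_0$ as a ${}_2\phi_1(0,q^{1-k};c;q,z)$-type series with suitable $c$ and $z$ by reading \eqref{III6}--\eqref{III8} backwards, using the $(-q^{-k},q^{1-k})$ structure. I would then check that the prefactors combine to $(-1)^aq^{a+1-\binom{k+1}{2}}(-q^2;q)_{k-1}$.

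As a sanity check at each stage, I would verify $k=1$, where the claim reads $\sum_b(-1)^bK(a,b)q^{-b}=(-1)^aq^{a}$, and compare small $a$ directly from \eqref{Kab}.
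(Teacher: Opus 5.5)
Your route is the paper's: specialize \eqref{t-generating function} at $t=-q^{-k}$, apply \eqref{III7} with $(n,b,z)=(a,-q^{-k},-q^{a+1})$, and finish with one more limiting transformation. Your prefactor $(-1)^aq^{ka}$ after \eqref{III7} is correct. The paper's intermediate display shows $(-1)^aq^{a+1-\binom{k+1}{2}}$ at that point, which is a misprint: it is the final prefactor written one step too early. For example, at $a=1$, $k=2$ the sum equals $1-q-q^{-1}$; your expression gives this value and the printed one does not.

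The gap is that you do not carry out the last step. You also do not need Sears' transformation for it. Your fallback is exactly what the paper does, but applied to your ${}_3\phi_2$ rather than to the ${}_2\phi_0$. Read \eqref{III6} from right to left with $(n,c,z)=(k-1,-q^2,q^{a+1})$. Then $q^{-n}=q^{1-k}$, $q/z=q^{-a}$ and $q^{1-n}/c=-q^{-k}$, so by symmetry in the numerator parameters
\[
{}_3\phi_2(-q^{-k},q^{1-k},q^{-a};0,0;q,q)
=(-q^2;q)_{k-1}\,q^{-\binom{k-1}{2}-(a+2)(k-1)}\,
{}_2\phi_1(0,q^{1-k};-q^2;q,q^{a+1}).
\]
Since $ka-\binom{k-1}{2}-(a+2)(k-1)=a+1-\binom{k+1}{2}$, the prefactors combine to $(-1)^aq^{a+1-\binom{k+1}{2}}(-q^2;q)_{k-1}$. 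Now expand the ${}_2\phi_1$ using your identity $(q^{1-k};q)_j/(q;q)_j=(-1)^jq^{\binom{j}{2}-(k-1)j}\qbin{k-1}{j}_q$. This gives the summand $(-1)^jq^{(a-k+2)j+\binom{j}{2}}\qbin{k-1}{j}_q/(-q^2;q)_j$, which is exactly the lemma. With this step written out, your argument is complete and coincides with the paper's.
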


\begin{proof}
Let $k$ be a positive integer.
Specializing $t=-q^{-k}$ in \eqref{t-generating function} gives
\[
\sum_{b=0}^a (-1)^b K(a,b) q^{-bk}
=\qhyp{2}{0}{-q^{-k},q^{-a}}{\text{--}}{q,-q^{a+1}}.
\]
First we apply the transformation formula \eqref{III7} for $(n,b,z)=(a,-q^{-k},-q^{a+1})$ to obtain
\[
\sum_{b=0}^a (-1)^b K(a,b) q^{-bk}
=(-1)^a q^{a+1-\binom{k+1}{2}} \qhyp{3}{2}{-q^{-k},q^{1-k},q^{-a}}{0,0}{q,q}.
\]
Assuming $k$ is a positive integer, by \eqref{III6} for $(n,c,z)=(k-1,-q^2,q^{a+1})$ this can be further rewritten as
\[
\sum_{b=0}^a (-1)^b K(a,b) q^{-bk}
=(-1)^a q^{a+1-\binom{k+1}{2}} (-q^2;q)_{k-1}\:
\qhyp{2}{1}{0,q^{1-k}}{-q^2}{q,q^{a+1}}.
\]
Since this is the lemma expressed in basic hypergeometric form, we are done.
\end{proof}

\subsection{The shape of $\A_r$}\label{sec:shapeA_r}

\begin{lemma}\label{lem:Am-is-cube}
For $r\geq 1$, $\A_r$ is a $2^{r-1}$-dimensional cube. 
\end{lemma}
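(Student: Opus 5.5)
The approach is to identify $\A_r$ as the preimage of the cube $[-1,1]^{2^{r-1}}$ under an invertible linear map. For $\ba\in\R^{\rodd}$ and $T\subseteq[r]$, define the linear functional
\[
H(\ba)(T):=\sum_{S\in\rodd}(-1)^{\abs{S\cap T}}\ba(S).
\]
By definition, $\A_r=\{\ba : H(\ba)(T)\leq 1 \text{ for all } T\subseteq[r]\}$.

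The key observation is a pairing of the constraints. Fix an element, say $r$. For $T\subseteq[r]$, let $T^*:=T\triangle\{r\}$ be the set obtained by toggling $r$. Then for every $S\in\rodd$,
\[
\abs{S\cap T^*}\equiv\abs{S\cap T}+\chi(r\in S)\pmod 2.
\]
This alone does not give $H(\ba)(T^*)=-H(\ba)(T)$, so I will use the complement instead. Set $T^c:=[r]\setminus T$. Since $\abs{S\cap T^c}=\abs{S}-\abs{S\cap T}$ and $\abs{S}$ is odd, we get $(-1)^{\abs{S\cap T^c}}=-(-1)^{\abs{S\cap T}}$. Hence
\[
H(\ba)(T^c)=-H(\ba)(T).
\]
The $2^r$ inequalities therefore split into $2^{r-1}$ complementary pairs, each of the form $-1\leq H(\ba)(T)\leq 1$. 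Choosing one representative $T$ from each pair, for instance the sets $T$ with $r\notin T$, i.e. $T\subseteq[r-1]$, we obtain
\[
\A_r=\big\{\ba : \abs{H(\ba)(T)}\leq 1 \text{ for all } T\subseteq[r-1]\big\}.
\]

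It remains to show that the square linear map
\[
\Phi:\R^{\rodd}\to\R^{2^{[r-1]}},\qquad \Phi(\ba):=\big(H(\ba)(T)\big)_{T\subseteq[r-1]}
\]
is invertible. Then $\A_r=\Phi^{-1}([-1,1]^{2^{r-1}})$ is the image of a cube under a linear isomorphism, i.e. a $2^{r-1}$-dimensional (parallelepiped) cube. This injectivity is presumably the content of Corollary~\ref{Cor: H is injectiev}, used earlier.

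For invertibility, use the bijection $\rodd\to 2^{[r-1]}$ given by $S\mapsto S\cap[r-1]$. Its inverse adjoins $r$ exactly when needed to make the size odd. For $T\subseteq[r-1]$ we have $S\cap T=(S\cap[r-1])\cap T$. So, writing $U:=S\cap[r-1]$, the matrix of $\Phi$ becomes
\[
\big((-1)^{\abs{U\cap T}}\big)_{T,U\subseteq[r-1]}.
\]
This is the character table of $(\Z/2)^{r-1}$, i.e. the $(r-1)$-fold tensor power of $\begin{pmatrix}1&1\\1&-1\end{pmatrix}$. It is a Hadamard matrix $M$ with $M^2=2^{r-1}I$, hence invertible.

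The main point to get right is the correct choice of pairing, by complement rather than by toggling an element, together with the reindexing $S\mapsto S\cap[r-1]$. Once these are in place, the Hadamard structure makes the rest routine. If "cube" is meant in the stricter sense of a metric cube, note additionally that $M/2^{(r-1)/2}$ is orthogonal. Then $\A_r$ is the image of $[-1,1]^{2^{r-1}}$ under $M^{-1}=2^{-(r-1)}M$, which is a scaled orthogonal map, so $\A_r$ is a genuine cube of side $2^{(3-r)/2}$ up to rotation.
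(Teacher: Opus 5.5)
Your proof is correct and is essentially the paper's argument. The paper also uses $y_{T^c}=-y_T$ to halve the constraints, keeps the representatives avoiding a fixed element (it uses $1$ where you use $r$), and reindexes $\rodd$ by the matching bijection. It then shows $H^{T}H=2^{r-1}I$, which is exactly your Hadamard observation. Your side length $2^{(3-r)/2}$ is a correct extra: it gives $2$ for $\A_1=[-1,1]$ and $\sqrt{2}$ for $\A_2$.
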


\begin{proof}
For each subset $T\subseteq [r]$, consider the linear form
\[
y_T(\ba)=\sum_{S\in \rodd} (-1)^{\abs{S\cap T}}\ba(S).
\]
Recall from the defining inequalities of $\A_r$ that $y_T(S)\leq 1$ for every subset $T\subseteq [r]$.
If we denote the complement of $T$ with respect to $[r]$ by $T^c=[r]\setminus T$, then $y_{T^c}(\ba)=-y_T(\ba)$.
Let $X=\{T\subseteq [r]:\ 1\notin T\}$.
Since exactly one of $T$ and $T^c$ is in $X$, the defining inequalities for $\A_r$ can be restated as $\abs{y_T(\ba)}\leq 1$ for every $T\in X$.
Now consider the linear map
\[
H:\mathbb{R}^{\rodd}\to \mathbb{R}^{X},\qquad (H\ba)(T)=y_T(\ba).
\]
Then $\A_r=H^{-1}([-1,1]^{2^{r-1}})$.
Hence we want to show that $H$ is a scalar multiple of an orthogonal map. We will show that $H^TH= 2^{r-1}I$.

We identify $X$ with $\rodd$ by defining
\[
S(U)=
\begin{cases}
U, & \text{if $\abs{U}$ is odd},\\
U\cup\{1\}, & \text{if $\abs{U}$ is even},
\end{cases}
\]
for $U\in X$.
Note that we have $T\cap S(U)=T\cap U$ for $T,U\in X$.
Therefore, for $T\in X$
\[
y_T(\ba)
=\sum_{U\in X}
(-1)^{\abs{T\cap U}}
\ba\big(S(U)\big).
\]
Our goal is thus to show that for $T_1,T_2\in X$,
\[
\sum_{U\in X} 
(-1)^{\abs{T_1\cap U}} 
(-1)^{\abs{T_2\cap U}}
=\begin{cases}
2^{r-1} &\text{if $T_1=T_2$},\\
0 &\text{otherwise}.
\end{cases}
\]
Let $T_1 \Delta T_2$ be the symmetric difference of $T_1$ and $T_2$.
Then
\[
(-1)^{\abs{T_1\cap U}} (-1)^{\abs{T_2\cap U}} 
=(-1)^{\abs{(T_1\Delta T_2) \cap U}}.
\]
The result now follows from the fact that
\[
\sum_{U\in X} (-1)^{\abs{(T_1\Delta T_2) \cap U }}
=\begin{cases}
2^{r-1} &\text{if $T_1\Delta T_2 =\emptyset$},\\
0 &\text{otherwise}.
\end{cases}\qedhere
\]
\end{proof}

\begin{cor}\label{Cor: H is injectiev}
Let $\ba,\bb\in\A_r$ such that
\[
\sum_{S\in\rodd}(-1)^{\abs{S\cap T}}\ba(S)
=\sum_{S\in \rodd}(-1)^{\abs{S\cap T}}\bb(S)
\]
for every subset $T\subseteq [r]$.
Then $\ba=\bb$.
\end{cor}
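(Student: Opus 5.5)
This follows directly from the proof of Lemma~\ref{lem:Am-is-cube}, where the map $H\colon\R^{\rodd}\to\R^{X}$, $(H\ba)(T)=y_T(\ba)$, was shown to satisfy $H^TH=2^{r-1}I$. The plan is to reduce the hypothesis to the statement $H\ba=H\bb$ and then invert $H$ on the left.

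First, restrict the assumed equalities to subsets $T\in X=\{T\subseteq[r]: 1\notin T\}$. The hypothesis holds for every $T\subseteq[r]$, so in particular $y_T(\ba)=y_T(\bb)$ for all $T\in X$. This is exactly $H\ba=H\bb$, or equivalently $H(\ba-\bb)=0$ by linearity. The remaining $T$, those containing $1$, give nothing new, since $y_{T^c}=-y_T$.

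Second, apply $H^T$ to obtain $2^{r-1}(\ba-\bb)=H^TH(\ba-\bb)=0$. Since $2^{r-1}\neq 0$, this forces $\ba=\bb$. The only point needing care is that $H^TH=2^{r-1}I$ was computed after identifying the index set $X$ with $\rodd$ via $U\mapsto S(U)$. Under this identification $H$ is a square $2^{r-1}\times 2^{r-1}$ matrix with entries $(-1)^{\abs{T\cap U}}$, and the orthogonality relation established in the lemma shows directly that this matrix is invertible.

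No step is a real obstacle, because the character-sum computation in Lemma~\ref{lem:Am-is-cube} already does the work. The hypothesis $\ba,\bb\in\A_r$ is not even needed: injectivity of $H$ holds on all of $\R^{\rodd}$.
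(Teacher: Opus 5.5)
Your proposal is correct and follows essentially the same route as the paper: the hypothesis, restricted to $T\in X$, gives $H\ba=H\bb$, and $H^TH=2^{r-1}I$ from the proof of Lemma~\ref{lem:Am-is-cube} then forces $\ba=\bb$. You also correctly observe that the sets $T\ni 1$ add nothing, since $y_{T^c}=-y_T$, and that the hypothesis $\ba,\bb\in\A_r$ is not needed.
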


\begin{proof}
Let $H$ be the map defined in the proof of Lemma~\ref{lem:Am-is-cube}.
The conditions on $\ba$ and $\bb$ imply that $H(\ba)=H(\bb)$. Since $H$ was shown to be injective, this further implies that $\ba=\bb$.
\end{proof}

\section*{Acknowledgments}

The first author was supported by Simons Foundation Grant 917224.  The second author was supported by NSF grant DMS 2154223.  We thank Melanie Matchett Wood, Roger Van Peski, and Jiahe Shen for helpful comments.

\bibliographystyle{plain}
\bibliography{FKSW}

@article{nguyen2024universality,
  title={Universality for cokernels of random matrix products},
  author={Nguyen, Hoi H and Van Peski, Roger},
  journal={Advances in Mathematics},
  volume={438},
  pages={Paper No. 109451, 70},
  year={2024},
  publisher={Elsevier}
}

@article{wood2017distribution,
  title={The distribution of sandpile groups of random graphs},
  author={Wood, Melanie Matchett},
  journal={Journal of the American Mathematical Society},
  volume={30},
  number={4},
  pages={915--958},
  year={2017}
}

@book{macdonald1995symmetric,
  title={Symmetric {F}unctions and {H}all {P}olynomials},
  author={Macdonald, Ian G},
  year={1998},
  publisher={Oxford University Press}
}

@article{clancy2015cohen,
  title={On a {C}ohen--{L}enstra heuristic for {J}acobians of random graphs},
  author={Clancy, Julien and Kaplan, Nathan and Leake, Timothy and Payne, Sam and Wood, Melanie Matchett},
  journal={Journal of Algebraic Combinatorics},
  volume={42},
  number={3},
  pages={701--723},
  year={2015},
  publisher={Springer}
}

@article{warnaar2012dedekind,
  title={Dedekind's $\eta$-function and {R}ogers--{R}amanujan identities},
  author={Warnaar, S Ole and Zudilin, Wadim},
  journal={Bulletin of the London Mathematical Society},
  volume={44},
  number={1},
  pages={1--11},
  year={2012},
  publisher={Wiley Online Library}
}

@article{berkovich2005positivity,
  title={Positivity preserving transformations for $q$-binomial coefficients},
  author={Berkovich, Alexander and Warnaar, S Ole},
  journal={Transactions of the American Mathematical Society},
  volume={357},
  number={6},
  pages={2291--2351},
  year={2005}
}

@article{fulman2019random,
  title={Random partitions and {C}ohen--{L}enstra heuristics},
  author={Fulman, Jason and Kaplan, Nathan},
  journal={Annals of Combinatorics},
  volume={23},
  number={2},
  pages={295--315},
  year={2019},
  publisher={Springer}
}

@article{meszaros2020distribution,
  title={The distribution of sandpile groups of random regular graphs},
  author={M{\'e}sz{\'a}ros, Andr{\'a}s},
  journal={Transactions of the American Mathematical Society},
  volume={373},
  number={9},
  pages={6529--6594},
  year={2020}
}

@article{koplewitz2023sandpile,
  title={Sandpile groups of random bipartite graphs},
  author={Koplewitz, Shaked},
  journal={Annals of Combinatorics},
  volume={27},
  number={1},
  pages={1--18},
  year={2023},
  publisher={Springer}
}

@article{bhargava2023rank,
  title={The rank of the sandpile group of random directed bipartite graphs},
  author={Bhargava, Atal and DePascale, Jack and Koenig, Jake},
  journal={Annals of Combinatorics},
  volume={27},
  number={4},
  pages={979--992},
  year={2023},
  publisher={Springer}
}

@inproceedings{friedman1989distribution,
  title={On the distribution of divisor class groups of curves over a finite field},
  author={Friedman, Eduardo and Washington, Lawrence C},
  booktitle={Th{\'e}orie des nombres},
  pages={227--239},
  year={1989},
  organization={de Gruyter Berlin}
}

@article{malle2010distribution,
  title={On the distribution of class groups of number fields},
  author={Malle, Gunter},
  journal={Experimental Mathematics},
  volume={19},
  number={4},
  pages={465--474},
  year={2010},
  publisher={Taylor \& Francis}
}

@article{garton2015random,
  title={Random matrices, the {C}ohen--{L}enstra heuristics, and roots of unity},
  author={Garton, Derek},
  journal={Algebra \& Number Theory},
  volume={9},
  number={1},
  pages={149--171},
  year={2015},
  publisher={Mathematical Sciences Publishers}
}

@book{GR04,
  title={Basic {H}ypergeometric {S}eries},
  author={Gasper, George and Rahman, Mizan},
  series={Encyclopedia of Mathematics and its Applications},
  volume={96},
  edition={Second},
  year={2004},
  publisher={Cambridge University Press, Cambridge}
}

@article{bytev2021q,
  title={$q$-{D}erivatives of multivariable $q$-hypergeometric function with respect to their parameters},
  author={Bytev, Vladimir V and Zhang, Pengming},
  journal={Physics of Particles and Nuclei Letters},
  volume={18},
  number={3},
  pages={284--289},
  year={2021},
  publisher={Springer}
}

@misc{FulmanKaplanSinghalWarnaar_SandpileBipartiteRepo,
  author       = {Jason Fulman and Nathan Kaplan and Deepesh Singhal and S Ole Warnaar},
  title        = {Sandpile Groups of Random Bipartite Graphs -- Numerical Experiments},
  howpublished = {\url{https://github.com/DDeepuS/Sandpile_group_Bipartite_graph}},
  year         = {2026},
  note         = {GitHub repository},
}

@article{lipnowski2020cohen,
  title={Cohen--{L}enstra heuristics and bilinear pairings in the presence of roots of unity},
  author={Lipnowski, Michael and Sawin, Will and Tsimerman, Jacob},
  journal={\href{https://arxiv.org/abs/2007.12533}{arXiv:2007.12533}},
  year={2020}
}

@inproceedings{wood2022probability,
  title={Probability theory for random groups arising in number theory},
  author={Wood, Melanie Matchett},
  booktitle={Proceedings of the International Congress of Mathematicians},
  volume={6},
  pages={4476--4508},
  year={2022}
}

@article{poonen2012random,
  title={Random maximal isotropic subspaces and {S}elmer groups},
  author={Poonen, Bjorn and Rains, Eric},
  journal={Journal of the American Mathematical Society},
  volume={25},
  number={1},
  pages={245--269},
  year={2012}
}

@article {NguyenWood,
    AUTHOR = {Nguyen, Hoi H and Wood, Melanie Matchett},
     TITLE = {Local and global universality of random matrix cokernels},
   JOURNAL = {Math. Ann.},
  FJOURNAL = {Mathematische Annalen},
    VOLUME = {391},
      YEAR = {2025},
    NUMBER = {4},
     PAGES = {5117--5210},
      ISSN = {0025-5831,1432-1807},
   MRCLASS = {15B52 (60B20)},
  MRNUMBER = {4884544},
       DOI = {10.1007/s00208-024-03050-0},
       URL = {https://doi.org/10.1007/s00208-024-03050-0},
}

@article {Hodges,
    AUTHOR = {Hodges, Eliot},
     TITLE = {The distribution of sandpile groups of random graphs with
              their pairings},
   JOURNAL = {Trans. Amer. Math. Soc.},
  FJOURNAL = {Transactions of the American Mathematical Society},
    VOLUME = {377},
      YEAR = {2024},
    NUMBER = {12},
     PAGES = {8769--8815},
      ISSN = {0002-9947,1088-6850},
   MRCLASS = {05C80 (15B52 60B20)},
  MRNUMBER = {4815522},
MRREVIEWER = {Regino\ Criado},
       DOI = {10.1090/tran/9244},
       URL = {https://doi.org/10.1090/tran/9244},
}

@article{SawinWood,
  title={The moment problem for random objects in a category},
  author={Sawin, Will and Wood, Melanie Matchett},
  journal={\href{https://arxiv.org/abs/2210.06279}{arXiv:2210.06279}},
  year={2024}
}

@article{Singhal_sandpile,
  title={Sandpile groups of random bipartite graphs},
  author={Singhal, Deepesh},
  journal={Preprint},
  year={2026}
}

@article {vanPeski2025,
    AUTHOR = {Van Peski, Roger},
     TITLE = {Symmetric functions and the explicit moment problem for
              abelian groups},
   JOURNAL = {Proc. Amer. Math. Soc.},
  FJOURNAL = {Proceedings of the American Mathematical Society},
    VOLUME = {153},
      YEAR = {2025},
    NUMBER = {7},
     PAGES = {2799--2812},
      ISSN = {0002-9939,1088-6826},
   MRCLASS = {20K01 (05E05)},
  MRNUMBER = {4912116},
       DOI = {10.1090/proc/17236},
       URL = {https://doi.org/10.1090/proc/17236},
}

\end{document}